\numberwithin{equation}{section}
\newtheorem{theorem}{Theorem}[section]
\newtheorem{lemma}{Lemma}[section]
\newtheorem{corollary}{Corollary}[section]
\newtheorem{proposition}{Proposition}[section]
\renewcommand{\figurename}{\textbf{Fig.}}
\begin{document}
\baselineskip=14pt

\newcommand{\la}{\langle}
\newcommand{\ra}{\rangle}
\newcommand{\psp}{\vspace{0.4cm}}
\newcommand{\pse}{\vspace{0.2cm}}
\newcommand{\ptl}{\partial}
\newcommand{\dlt}{\delta}
\newcommand{\sgm}{\sigma}
\newcommand{\al}{\alpha}
\newcommand{\be}{\beta}
\newcommand{\G}{\Gamma}
\newcommand{\gm}{\gamma}
\newcommand{\vs}{\varsigma}
\newcommand{\Lmd}{\Lambda}
\newcommand{\lmd}{\lambda}
\newcommand{\td}{\tilde}
\newcommand{\vf}{\varphi}
\newcommand{\yt}{Y^{\nu}}
\newcommand{\wt}{\mbox{wt}\:}
\newcommand{\rd}{\mbox{Res}}
\newcommand{\ad}{\mbox{ad}}
\newcommand{\stl}{\stackrel}
\newcommand{\ol}{\overline}
\newcommand{\ul}{\underline}
\newcommand{\es}{\epsilon}
\newcommand{\dmd}{\diamond}
\newcommand{\clt}{\clubsuit}
\newcommand{\vt}{\vartheta}
\newcommand{\ves}{\varepsilon}
\newcommand{\dg}{\dagger}
\newcommand{\tr}{\mbox{Tr}}
\newcommand{\ga}{{\cal G}({\cal A})}
\newcommand{\hga}{\hat{\cal G}({\cal A})}
\newcommand{\Edo}{\mbox{End}\:}
\newcommand{\for}{\mbox{for}}
\newcommand{\kn}{\mbox{ker}}
\newcommand{\Dlt}{\Delta}
\newcommand{\rad}{\mbox{Rad}}
\newcommand{\rta}{\rightarrow}
\newcommand{\mbb}{\mathbb}
\newcommand{\lra}{\Longrightarrow}
\newcommand{\X}{{\cal X}}
\newcommand{\Y}{{\cal Y}}
\newcommand{\Z}{{\cal Z}}
\newcommand{\U}{{\cal U}}
\newcommand{\V}{{\cal V}}
\newcommand{\W}{{\cal W}}
\newcommand{\sta}{\theta}
\setlength{\unitlength}{3pt}
\newcommand{\msr}{\mathscr}
\newcommand{\wht}{\widehat}
\newcommand{\mfk}{\mathfrak}
\renewcommand{\figurename}{\textbf{Fig.}}

\begin{center}{\large \bf Orthogonal Oscillator Representations, Laplace Equations \\ \pse and Intersections of Determinantal Varieties} \footnote {2010 Mathematical Subject
Classification. Primary 17B10, 22E47; Secondary 14M12, 35C05.}
\end{center}

\vspace{0.2cm}

\begin{center}{\large Hengjia Zhang\footnote{Corresponding author.} and Xiaoping Xu\footnote{Research supported
 by National Key R\&D Program of China 2020YFA0712600.
} }\end{center}

\begin{center}{
HLM, Institute of Mathematics, Academy of Mathematics \& System
Sciences\\ Chinese Academy of Sciences, Beijing 100190, P.R. China
\\ \& School of Mathematics, University of Chinese Academy of Sciences,\\ Beijing 100049, P.R. China}\end{center}

\begin {abstract}
\quad

Associated varieties are geometric objects appearing in infinite-dimensional representations of semisimple Lie algebras (groups). By applying Fourier transformations to the natural orthogonal oscillator representations of special linear Lie algebras, Luo and the second author (2013) obtained a big family of infinite-dimensional irreducible representations of the algebras on certain spaces of homogeneous solutions of the Laplace equation. In this paper, we prove that the associated varieties of these irreducible representations are the intersections of explicitly given determinantal varieties. This provides an explicit connection among representation theory, partial differential equations and algebraic geometry.

 \vspace{0.3cm}

\noindent{\it Keywords}:\hspace{0.3cm} special linear Lie
algebra; orthogonal oscillator
 representation; associated variety; irreducible module; filtrations; determinantal variety.

\end{abstract}

\section {Introduction}
In 1971, Bernstein \cite{Bj} defined a variety associated with a filtrated module of the algebra of differential operators, in order to construct the fundamental solution of a linear partial differential equation with constant coefficients. A primitive ideal of the universal enveloping
algebra of a semisimple Lie algebra is the kernel of an irreducible representation of the Lie algebra. Borho and Jantzen \cite{BJ} (1977) found a translation principle among primitive ideals and used it to prove the Dixmier orbit map is well defined for $sl(n)$. Vogan \cite{Vd1} (1978) found particularly refined techniques
in the description of the leading coefficient in the Hilbert-Samuel polynomial of a Harish-Chandra module for a semisimple Lie algebra and its maximal compact subalgebra, where the filtration of the module is naturally induced from the canonical filtration  of the universal enveloping
algebra of the Lie algebra. Moreover, he \cite{Vd2} (1980) derived a new order relation in primitive ideals and used it to prove a Joseph's conjecture on primitive ideals and multiplicities in the composition series of principal series representations.

Borho and Brylinsky \cite{BB} (1982) proved that the kernel of the representation of the  universal enveloping
algebra of a semisimple Lie algebra by global differential operators on a complete homogeneous space of the corresponding Lie group is the annihilator of a generalized Verma module, and the associated variety (the term firstly appeared) of the ideal is irreducible. Joseph \cite{Ja1} (1983) gave an extensive review on the classification of primitive ideals in  the  universal enveloping
algebra of a semisimple Lie algebra. He \cite{Ja2} (1984) continued his investigation on associated varieties aiming for a way to compute them and to relate it more naturally to the nilpotent orbits and the corresponding Springer representations  of the Weyl group. Moreover, he  \cite{Ja3} (1985) proved that the associated varieties are the Zariski closures of nilpotent orbits. Matumote \cite{Mh} (1987) proved a necessary condition for the existence of certain Whittaker vectors in terms of associated varieties. Vogan \cite{Vd3} (1991)  proved that the associated varieties of $(G,K)$-modules are the Zariski closures of union of $K$-nilpotent orbits. Melnikov \cite{Ma} (1993) conditionally proved that the associated variety of an irreducible highest weight $sl(n)$-module is irreducible and Williamson \cite{GW} (1995) found a counter example. Barbasch and Bozicevic \cite{BB1} (1999) derived a formula for the associated varieties of induced modules, which is analogous to the one for wave front set of a derived functor module obtained by Barbasch and Vogan in 1980.  There are also many other interesting significant works related to associated varieties
 (e.g., cf. \cite{BV1, BV2, Ja3, Yh1, Yh2, Jh, Yh3, Mw, GY, Kt, Mi, Tp, NTW, AV, BXX, BMW, BHXZ}). It seems desirable to know if there exist geometrically sophisticated explicit known algebraic varieties that are associated varieties.

  Determinantal varieties are natural important geometric objects in algebraic geometry. Barshay \cite{Bj2} (1973) used them to prove the coordinate ring of the Veronese is arithmetically normal and Cohen-Macaulay. Catanese \cite{Cf} (1981) used symmetric determinantal varieties to prove Babbage's conjecture. de Concini, Eisenbud and Procesi \cite{DEP} (1980) found an important connection of Young diagrams with determinantal varieties. Griffiths \cite{Gp} (1983) found that determinantal varieties play important roles in the infinitesimal variations of Hodge structure. Enright and Hunziker \cite{EH} (2004) gave explicit formulas for the numerator polynomials of  the Hilbert series associated with the closures of the nilpotent orbits corresponding to Wallach representations through determinantal varieties. Moreover,  Enright, Hunziker and Pruett
  \cite{EHP} (2014) presented  a natural generalization of Young diagrams for Hermitian symmetric spaces and used it to study syzygies of determinantal varieties.  Bernardara, Bolognesi and Faenzi \cite{BBF} (2016) derived a homological projective duality for determinantal varieties.
  Wong \cite{Wk} (2017) gave an explicit calculation of the Euler characteristic of the spaces of classical vacua in terms of the Euler characteristics of suitably chosen determinantal subvarieties. Homma and  Manabe \cite{HM} (2018) showed that determinantal Calabi-Yau varieties in Grassmannians are closely related to Givental I-functions.

 By applying Fourier transformations to the natural orthogonal oscillator representations of special linear Lie algebras, Luo and the second author \cite{LX} (2013) obtained a big family of infinite-dimensional irreducible representations of the algebras on certain spaces of homogeneous solutions of the Laplace equation. Bai \cite{Bz} (2015) found the Gel'fand-Kirillov dimensions of these representations. In this paper, we prove that the associated varieties of these irreducible representations are intersections of  explicitly given determinantal varieties. This provides an explicit connection among  representation theory, partial differential equations and algebraic geometry. It also partially meets Bernstein's original concern in \cite{Bj}. Below we give a more detailed technical introduction to our work.

Let $\mbb F$ be a field with characteristic 0, and let $n>1$ be an integer. Denote by $\msr A=\mbb F[x_1,x_2,...,x_n,y_1,y_2,...,y_n]$ the polynomial algebra in $2n$ variables. Let $E_{r,s}$ be the square matrix with 1 as its $(r,s)$-entry and 0
as the others. The special linear Lie algebra
\begin{equation}
sl(n)=\sum_{1\leq i<j\leq
n}(\mbb{F}E_{i,j}+\mbb{F}E_{j,i})+\sum_{r=1}^{n-1}\mbb{F}(E_{r,r}-E_{r+1,r+1})\end{equation}
with the commutator as its Lie bracket. For any
two integers $p\leq q$, we denote $\ol{p,q}=\{p,p+1,\cdots,q\}$. The natural orthogonal oscillator representation $\pi$ of $sl(n)$ on $\msr A$ is given as follows:
\begin{equation}\label{a1.2}
\pi(E_{i,j})=x_i\ptl_{x_j}-y_j\ptl_{y_i}\qquad\for\;\;i,j\in\ol{1,n}.\end{equation}

With respect to the above representation, the Laplace operator
\begin{equation}\Dlt=\sum_{i=1}^n\ptl_{x_i}\ptl_{y_i}.\end{equation}
Denote by $\mbb N$ the set of nonnegative integers. For $\ell_1,\ell_2\in \mbb N$, we denote by $\msr A_{\ell_1,\ell_2}$ the subspace of homogeneous polynomials with the degree $\ell_1$ in $\{x_1,x_2,...,x_n\}$ and the degree $\ell_2$ in $\{y_1,y_2,...,y_n\}$. Then $\msr A=\bigoplus_{\ell_1,\ell_2\in\mbb N}\msr A_{\ell_1,\ell_2}$ is an $\mbb N^2$-graded algebra.

Set the subspace of homogeneous harmonic polynomials as follows:
\begin{equation}{\msr H}_{\ell_1,\ell_2}=\{f\in {\msr B}_{\ell_1,\ell_2}\mid
\Dlt(f)=0\}.\label{a1.4}\end{equation}
Then ${\msr H}_{\ell_1,\ell_2}$ forms an irreducible $sl(n)$-module with highest weight $\ell_1\lmd_1+\ell_2\lmd_{n-1}$ (e.g., cf. \cite{Xx}), where $\lmd_i$ denotes the $i$th fundamental weight of $sl(n)$.

Denote by $\msr F_z$ the Fourier transformation on $z$. Fix $n_1,n_2\in\ol{1,n}$ with $n_1\leq n_2$. Letting (\ref{a1.2}) act on the space
\begin{equation}{\msr A}'=\msr F_{x_1}\cdots \msr F_{x_{n_1}}\msr F_{x_{n_2+1}}\cdots \msr F_{x_n}(\msr A)\end{equation}
of distributions, one can obtain a big family of infinite-dimensional irreducible representations of $sl(n)$ on certain spaces of homogeneous solutions of the Laplace equation. However, this picture is not convenient to be dealt with. Instead, we consider another equivalent representation as follows.
In the representation formulas given in (\ref{a1.2}), applying the Fourier transformations on operators:
\begin{equation}\label{a1.6}
\ptl_{x_r}\mapsto -x_r,\;
 x_r\mapsto\ptl_{x_r},\;\ptl_{y_s}\mapsto -y_s,\;
 y_s\mapsto\ptl_{y_s}\qquad \for\; r\in\ol{1,n_1},\;s\in\ol{n_2+1,n},\end{equation}
we obtain a new representation $\td{\pi}$ on $\msr A$. The representations $(\pi,\msr A')$ and $(\td{\pi},\msr A)$ are equivalent (or isomorphic).
Under the changes in (\ref{a1.6}), the Laplace operator becomes
\begin{equation}\td\Dlt=\sum_{i=1}^{n_1}x_i\ptl_{y_i}-\sum_{r=n_1+1}^{n_2}\ptl_{x_r}\ptl_{y_r}+\sum_{s=n_2+1}^n
y_s\ptl_{x_s}.\label{a1.7}\end{equation}

Take the degree of $\{x_1,...,x_{n_1},y_{n_2+1},...,y_n\}$ as $-1$ and that of $\{x_{n_1+1},...,x_n,y_1...,y_{n_2}\}$
as 1. Denote by $\mbb Z$ the ring of integers. For $\ell_1,\ell_2\in \mbb Z$, we denote by $\msr A_{\la\ell_1,\ell_2\ra}$ the subspace of homogeneous polynomials with the degree $\ell_1$ in $\{x_1,x_2,...,x_n\}$ and the degree $\ell_2$ in $\{y_1,y_2,...,y_n\}$.
 Then $\msr A=\bigoplus_{\ell_1,\ell_2\in\mbb Z}\msr A_{\la\ell_1,\ell_2\ra}$ is a $\mbb Z^2$-graded algebra. Set
\begin{equation}{\msr H}_{\la\ell_1,\ell_2\ra}=\{f\in {\msr A}_{\la\ell_1,\ell_2\ra}\mid
\td{\Dlt}(f)=0\}.\end{equation}The following is a result from \cite{LX}:\psp

{\bf Proposition 1}\quad {\it  For
$\ell_1,\ell_2\in\mbb{Z}$ , the necessary and sufficient condition for ${\msr H}_{\la\ell_1,\ell_2\ra}$ to be an infinite-dimensional irreducible $sl(n)$-module is as follows:

(1)  When $n_1+1<n_2$, (a) $\ell_1+\ell_2\leq n_1-n_2+1$; (b) $n_2=n$, $\ell_1\in \mbb N$ and $\ell_2=0$; (c) $n_2=n$, $\ell_2\in \mbb N  $ and $\ell_1 \geq n_1-n+2$;

(2)  If $n_1+1=n_2$,  $\ell_1+\ell_2\leq 0$ or $n_2=n$ and
$0\leq\ell_2\leq\ell_1$;

(3)  In the case $n_1=n_2$, $\ell_1+\ell_2\leq 0$ and:(a)
 $\ell_2\leq 0 $, $n_1<n-1$ and $n\geq 3$;
 (b) $\ell_1\leq 0$, $1<n_1<n $ and $n\geq 3$; (c) $\ell_1,\ell_2\leq
 0$, $n_1=1$ and $n=2$. } \psp

 The above irreducible modules are infinite-dimensional highest weight modules with distinct highest weights. For instance,  if $n_1+1<n_2<n$
and $m_1,m_2\in\mbb{N}$ with $m_1+m_2\geq n_2-n_1-1$, ${\msr
H}_{\la-m_1,-m_2\ra}$ has  a highest-weight vector
$x_{n_1}^{m_1}y_{n_2+1}^{m_2}$ of weight
$m_1\lmd_{n_1-1}-(m_1+1)\lmd_{n_1}-(m_2+1)\lmd_{n_2}+m_2(1-\dlt_{n_2,n-1})\lmd_{n_2+1}$.

Let $\mfk g$ be a finite-dimensional semisimple Lie algebra and let $U(\mfk g)$ be the universal enveloping algebra of $\mfk g$. Denote
$U_0(\mfk g)=\mbb F$ and for $0<m\in\mbb Z$, $U_m(\mfk g)=\mbb F+\sum_{r=1}^m\mfk g^r$. Then $\{U_k(\mfk g)\mid k\in\mbb N\}$ forms a filtration of $U(\mfk g)$. Suppose that $M$ is a finitely generated $U(\mfk g)$-module and $M_0$ is a finite-dimensional subspace of $M$ satisfying
\begin{equation}\label{a1.9}
(U(\mfk g))(M_0)=M.\end{equation}
Setting $M_k =(U_k( \mathfrak{g}))(M_0)$, we get a filtration $\{M_k\mid k\in\mbb N\}$ of $M$.

The associated graded module is given by
\begin{equation}\label{a1.10}
\ol{M}=\mbox{gr}( M; M_0) = \bigoplus_{k = 0}^\infty \ol{M_k}\quad \text{with}\quad \ol{M}_k=M_k / M_{k-1},
\end{equation}
and it is a finitely generated graded $S( \mathfrak{g})$-module, since graded algebra $\mbox{gr}(  U( \mathfrak{g}))$ is canonically isomorphic to the symmetric tensor algebra $S( \mathfrak{g})$ by the PBW theorem. Here we identify $S( \mathfrak{g})$ with $\mathbb{F}[\mathfrak{g}^*]$, the polynomial ring over $\mathfrak{g}^*$, in the canonical way through the Killing form of $  \mathfrak{g} $.

The annihilator of $\ol{M}$ in $S( \mathfrak{g})$ is defined as
\begin{equation}\label{a1.11}
\operatorname{Ann}_{S( \mfk g)} (\ol{M})=\{ \xi \in  S( \mathfrak{g})\mid \xi(v)=0,\; \forall\; v \in \ol{M} \},\end{equation}
which is a graded ideal of $S( \mathfrak{g})$. It defines the associated variety in the dual space $\mathfrak{g}^*  $:
\begin{equation}
{\msr V}(M)=\{ f \in \mathfrak{g}^*\mid u(f) =0,\;\forall\; u\in \operatorname{Ann}_{S(\mathfrak{g})}(\ol{M})\}.\end{equation}
The variety does not depend on the choice of the subspace $M_0$ satisfying (\ref{a1.9}).

Let $S_1$ and $S_2$ be subsets of $\ol{1,n}$.
Denote the determinantal ideal $I_t (S_1,S_2)$ to be the ideal generated by all the $t$-minors in the matrix $(z_{i,j})_{i \in S_1,j\in S_2}$, and $\msr{V}_t(S_1,S_2) =\msr{V}(I_t)$ is called  a {\it determinantal variety}.
For convenience, we denote $I_t(S_1,S_2)=0$ and $\msr{V}_t(S_1 , S_2)=\mbb A^{|S_1|  |S_2|}$, when $t>|S_1|$ or $t>|S_2|$.

Recall $n_1,n_2\in\ol{1,n}$ with $n_1\leq n_2$. Set
\begin{equation}\label{aa1.13}
J_1=\ol{1,n_1},\quad J_2=\ol{n_1+1,n_2},\quad J_3=\ol{n_2+1,n}.\end{equation}
The following is our main theorem in this paper: \psp

{\bf Main Theorem}\quad {\it If $n_1\leq n_2$, $\ell_1 \leq 0$ or $\ell_2 \leq 0$,
\begin{equation}\msr V({\msr H}_{\la\ell_1,\ell_2\ra})\cong \msr{V} _3(J_2\cup J_3, J_1\cup J_2)\cap \msr{V}_2(J_2 , J_1)\cap \msr{V}_2(J_3 , J_2) \cap\msr{V}_1(J_2 , J_2).\end{equation} 
If $n_1<n_2=n$ and $\ell_1,\ell_2>0$,
\begin{equation}
\msr V( {\msr H}_{\la\ell_1,\ell_2\ra})\cong \msr{V}_2(J_2 , J_1).\end{equation}}\pse

 The theorem covers all the irreducible $sl(n)$-modules ${\msr H}_{\la\ell_1,\ell_2\ra}$ listed in Proposition 1.
In particular, when $n_1=n_2$ and $|J_1|,|J_3| \geq 3$,
\begin{equation}\msr V({\msr H}_{\la\ell_1,\ell_2\ra}) \cong
\msr{V}_3 (J_3 , J_1)\end{equation}
 is exactly a determinantal variety. When $n_2=n$, the projective version of the associated variety
\begin{equation}\msr V({\msr H}_{\la\ell_1,\ell_2\ra}) \cong
\msr{V}_2 (J_2 , J_1)\end{equation}
is the well-known Serge variety in algebraic geometry.

The paper is organized as follows. In Section 2, we explicitly describe the filtration of ${\msr H}_{\la\ell_1,\ell_2\ra}$ with respect to the representation $\td{\pi}$ and a properly selected finite-dimensional subspace $M_0$ by an adjusted degree $\mfk d$ (cf. (\ref{a2.46})-(\ref{a2.48})). In Section 3, we study certain linear independence on certain subsets in the spanning subsets of filtration subspaces by introducing the notion of ``3-chain" , which plays a key role in next section. Finally in Section 4, we present a proof of the main theorem by using certain ``order" on the products of quadratic alternating polynomials.

\section{Explicit Description of the Filtration for ${\msr H}_{\la\ell_1,\ell_2\ra}$}
In this section, we select a special subspace $M_0$ of $M={\msr H}_{\la\ell_1,\ell_2\ra}$ and determine a spanning subset  for each $M_k=(U_k( \mathfrak{g}))(M_0)$.  Moreover, a new degree function ${\mfk d}$ on $M$  is introduced in order to determine the structure of $M_k$.

In the rest of this paper, we only consider the representation $\td\pi$ of $sl(n)$ obtained under the Fourier transformations (\ref{a1.6}), whose representation formulas are given by
\begin{equation}\label{a2.1}
\td{\pi}(E_{i,j})=E_{i,j}^x-E_{j,i}^y\qquad\for\;\;i,j\in\ol{1,n}\end{equation} with
\begin{equation}E_{i,j}^x=\left\{\begin{array}{ll}-x_j\ptl_{x_i}-\delta_{i,j}&\mbox{if}\;
i,j\in\ol{1,n_1};\\ \ptl_{x_i}\ptl_{x_j}&\mbox{if}\;i\in\ol{1,n_1},\;j\in\ol{n_1+1,n};\\
-x_ix_j &\mbox{if}\;i\in\ol{n_1+1,n},\;j\in\ol{1,n_1};\\
x_i\partial_{x_j}&\mbox{if}\;i,j\in\ol{n_1+1,n}
\end{array}\right.\end{equation}
and
\begin{equation}\label{a2.3}
E_{i,j}^y=\left\{\begin{array}{ll}y_i\ptl_{y_j}&\mbox{if}\;
i,j\in\ol{1,n_2};\\ -y_iy_j&\mbox{if}\;i\in\ol{1,n_2},\;j\in\ol{n_2+1,n};\\
\ptl_{y_i}\ptl_{y_j} &\mbox{if}\;i\in\ol{n_2+1,n},\;j\in\ol{1,n_2};\\
-y_j\partial_{y_i}-\delta_{i,j}&\mbox{if}\;i,j\in\ol{n_2+1,n}.
\end{array}\right.\end{equation}
Moreover, we adopt the notion
\begin{equation}\xi(f)=\td{\pi}(\xi)(f)\qquad\for\;\;\xi\in sl(n),\;f\in\msr A.\end{equation}

Denote
\begin{equation}x^\al=x_1^{\al_1}x_2^{\al_2}\cdots x_n^{\al_n},\;\;y^\al=y_1^{\al_1}y_2^{\al_2}\cdots y_n^{\al_n}\qquad \for\;\;\al=(\al_1,\al_2,...,\al_n)\in \mbb N^n.\end{equation}
In the rest of this section, we always assume $n_1<n_2$.
Define the linear operator $T$ on $\msr A$ by
\begin{align}\label{a2.6}
T(x^\al y^\be)=\sum_{i=0}^{\infty} \frac{\left(x_{n_1+1} y_{n_1+1}\right)^i
	 \left(\tilde{\Delta}+\partial_{x_{n_1+1}}\partial_{ y_{n_1+1}}\right)^i }
 {\prod_{r=1}^i \left(\alpha_{n_1+1} +r\right) \left(\beta_{n_1+1}+ r\right)}(x^\al y^\be)
\end{align}
 for $\al,\be\in \mbb N^n$.
 Then we have
\begin{equation}\label{a2.7}
T \td{\pi}(E_{i,j})= \td{\pi}(E_{i,j}) T\qquad\for\;\;n_1+1\neq i,j\in \ol{1,n}.\end{equation}
 Moreover, the set
\begin{align}\label{a2.8}
\left\{ T\left(x^\alpha y^\beta \right) \mid \alpha,\beta  \in \mathbb{N}^n,\alpha_{n_1+1}\beta_{n_1+1}=0,x^\alpha y^\beta \in {\msr A}_{\la\ell_1,\ell_2\ra}\right\}
\end{align}forms a basis of the module ${\msr H}_{\la\ell_1,\ell_2\ra}$, as shown in \cite{LX}. Furthermore, we introduce the notion
\begin{eqnarray}\label{a2.9}
&&
N\begin{pmatrix}
k_{12} & k_{12} & k_{13} \\
k_{21} & k_{22} &  k_{23}
\end{pmatrix} \nonumber\\ &=&
\left\{\left.   x^\alpha y^\beta \in \msr A_{\la\ell_1,\ell_2\ra}    \right\vert
  \sum_{r=1}^{n_1} \alpha_r =k_{11},\sum_{r=n_1+1}^{n_2} \alpha_r =k_{12},\sum_{r=n_2+1}^{n} \alpha_r =k_{13},
\right.  \nonumber\\  &&  \left.
 \sum_{r=1}^{n_1} \beta_r =k_{21},\sum_{r=n_1+1}^{n_2} \beta_r =k_{22},\sum_{r=n_1+1}^{n_2} \beta_r =k_{22} ,\alpha_{n_1+1}\beta_{n_1+1}=0
\right\}
\end{eqnarray} and
\begin{align}\label{a2.10}
TN\begin{pmatrix}
k_{12} & k_{12} & k_{13} \\
k_{21} & k_{22} &  k_{23}
\end{pmatrix}=\text{Span} \left\{T(f)\mid f\in N\begin{pmatrix}
k_{12} & k_{12} & k_{13} \\
k_{21} & k_{22} &  k_{23}
\end{pmatrix}\right\}\end{align}
where $k_{11},k_{12},k_{13},k_{21},k_{22},k_{23} \in \mathbb{N}$. Now we take
\begin{align}\label{a2.11}
M_0=
\begin{cases}
   TN\begin{pmatrix}
-\ell_1 & 0 & 0 \\
0 & 0 & -\ell_2
\end{pmatrix} & \text{if } \ell_1\leq 0,\ell_2\leq 0, \pse\\
TN\begin{pmatrix}
-\ell_1 & 0 & 0 \\
0 & \ell_2 & 0
\end{pmatrix} & \text{if } \ell_1\leq 0,\ell_2\geq 0, \pse\\
TN\begin{pmatrix}
0 & \ell_1 & 0 \\
0 & 0 & -\ell_2
\end{pmatrix} & \text{if } \ell_1\geq 0,\ell_2\leq 0,
\end{cases}
\end{align}
which is a finite-dimensional subspace of $M={\msr H}_{\la\ell_1,\ell_2\ra}$. Set
\begin{equation}M_k=U_k(sl(n))(M_0)\qquad\for\;\;k\in \mbb N.\end{equation}

For convenience, we denote
\begin{equation}X_S=\{x_i\mid i\in S\},\quad Y_S=\{y_i\mid i\in S\}\qquad\for\;\;S\subset\ol{1,n}.\end{equation}
In order to express $M_k$ in terms of the basis given in (\ref{a2.8}), we need the following conclusions.

\begin{lemma}
\label{alem:1}
Fix $k,k_{13},k_{21}\in\mbb N$. For each $j$, $i_{1,j} \in J_1$, $i_{3,j} \in J_3$, $v_0\in \mathbb{F}\left[X_{J_1},X_{J_2 \setminus \{n_1+1\}},Y_{J_3} \right]\\ \cup  \mathbb{F}\left[X_{J_1},Y_{J_2 \setminus \{n_1+1\}},Y_{J_3} \right]$, we have
\begin{eqnarray}\label{a2.14}
& &
\left( -1\right)^{k} \frac{k! }{\left( k- k_{13}\right) !} \  T\left( v_0 \cdot
\left( \prod_{j=1}^{k}  x_{i_{1,j}}\right)
\left( \prod_{j=1}^{k_{13}}  x_{i_{3,j}}\right) \cdot  x_{n_1+1}^{k-k_{13}}
 \right)\nonumber\\& =&
 E_{i_{3,k_{13}},n_1+1} \cdots  E_{i_{3,1},n_1+1}  E_{n_1+1,i_{1,1}}  \cdots E_{n_1+1,i_{1,k}}
 \left( v_0 \right)
\label{aa31}
\end{eqnarray}
when $k \geq k_{13}$,
and
\begin{eqnarray}\label{a2.15}
& &\left( -1\right)^{k} \frac{k! }{\left( k- k_{21}\right) !} \  T\left(v_0 \cdot
\left( \prod_{j=1}^{k_{21}}  y_{i_{1,j}}\right)
\left( \prod_{j=1}^{k }  y_{i_{3,j}}\right) \cdot  y_{n_1+1}^{k-k_{21}}
 \right) \nonumber\\&=&
 E_{n_1+1,i_{1,1}}  \cdots E_{n_1+1,i_{1,k_{21}}}  E_{i_{3,1},n_1+1} \cdots  E_{i_{3,k},n_1+1}
 \left( v_0 \right)
\label{aa32}
\end{eqnarray}
if $k \geq k_{21}$.
\end{lemma}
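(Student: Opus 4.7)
The plan is to verify both identities by direct computation, matching the two sides term by term. From (\ref{a2.1})--(\ref{a2.3}) one reads off
\[
E_{n_1+1,i}=-x_{n_1+1}x_i-y_i\ptl_{y_{n_1+1}}\quad (i\in J_1),\qquad E_{i',n_1+1}=x_{i'}\ptl_{x_{n_1+1}}+y_{n_1+1}y_{i'}\quad (i'\in J_3),
\]
and observes that $v_0$ contains neither $x_{n_1+1}$, nor $y_{n_1+1}$, nor any variable from $Y_{J_1}$. For (\ref{a2.14}) I first apply $E_{n_1+1,i_{1,1}}\cdots E_{n_1+1,i_{1,k}}$ to $v_0$: since no $y_{n_1+1}$ is ever produced, the derivative pieces $y_{i_{1,j}}\ptl_{y_{n_1+1}}$ die at every step, leaving $(-x_{n_1+1})^k\prod_j x_{i_{1,j}}\cdot v_0$. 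Next I apply $E_{i_{3,k_{13}},n_1+1}\cdots E_{i_{3,1},n_1+1}$. A quick check shows that these operators pairwise commute in the representation (the multiplicative and derivative pieces involve disjoint variables across different $m$), and within each one the two summands also commute. So the composition expands as a sum over subsets $S\subseteq\{1,\ldots,k_{13}\}$, picking $y_{n_1+1}y_{i_{3,m}}$ for $m\in S$ and $x_{i_{3,m}}\ptl_{x_{n_1+1}}$ for $m\notin S$. Using $\ptl_{x_{n_1+1}}^{k_{13}-|S|}(x_{n_1+1}^k)=(k!/(k-k_{13}+|S|)!)\,x_{n_1+1}^{k-k_{13}+|S|}$, the right-hand side of (\ref{a2.14}) simplifies to
\[
(-1)^k\sum_{S}\frac{k!}{(k-k_{13}+|S|)!}\,x_{n_1+1}^{k-k_{13}+|S|}y_{n_1+1}^{|S|}\prod_{m\in S}y_{i_{3,m}}\prod_{m\notin S}x_{i_{3,m}}\prod_j x_{i_{1,j}}\cdot v_0.
\]

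For the left-hand side I expand $T$ using (\ref{a2.6}). The decisive simplification is that $\td\Dlt+\ptl_{x_{n_1+1}}\ptl_{y_{n_1+1}}$, applied to the $T$-argument $v_0\prod_j x_{i_{1,j}}\prod_m x_{i_{3,m}}x_{n_1+1}^{k-k_{13}}$, reduces to $\sum_{s\in J_3}y_s\ptl_{x_s}$: the summands $x_i\ptl_{y_i}$ ($i\in J_1$) vanish since $v_0$ has no $Y_{J_1}$; and the summands $\ptl_{x_r}\ptl_{y_r}$ ($r\in J_2\setminus\{n_1+1\}$) vanish because $v_0$ carries only one of $x_r,y_r$ for each such $r$ and no other factor in the argument touches $J_2\setminus\{n_1+1\}$. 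Each application of the surviving piece converts one $x_{i_{3,m}}$ into $y_{i_{3,m}}$, so iterating $i$ times produces a factor of $i!$ multiplied by the sum over subsets of size $i$. Substituting into (\ref{a2.6}) with $\alpha_{n_1+1}=k-k_{13}$ and $\beta_{n_1+1}=0$, the $i!$ cancels the $\prod_{r=1}^i r$ in the denominator, and what remains in the summand for each $S$ is $(k-k_{13})!/(k-k_{13}+|S|)!$. Multiplying by the prefactor $(-1)^k k!/(k-k_{13})!$ gives exactly the expression displayed above, finishing (\ref{a2.14}).

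The identity (\ref{a2.15}) follows from the mirror-image calculation obtained by swapping the roles of $x$ and $y$: since no $x_{n_1+1}$ is ever present during the first stage, $E_{i_{3,1},n_1+1}\cdots E_{i_{3,k},n_1+1}(v_0)=y_{n_1+1}^k\prod_j y_{i_{3,j}}\cdot v_0$; the $E_{n_1+1,i_{1,m}}$'s then expand as a subset sum by the same commutation argument; and on the new $T$-argument $v_0\prod_j y_{i_{1,j}}\prod_j y_{i_{3,j}}y_{n_1+1}^{k-k_{21}}$ the only surviving summand of $\td\Dlt+\ptl_{x_{n_1+1}}\ptl_{y_{n_1+1}}$ is $\sum_{i\in J_1}x_i\ptl_{y_i}$, which converts $y_{i_{1,j}}$ into $x_{i_{1,j}}$. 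The main obstacle I foresee is the combinatorial bookkeeping: pairing the subset $S$ arising from the operator-product expansion with the same $S$ arising from the $i$-th iterate of the surviving piece of $\td\Dlt$, and verifying that every factorial cancels correctly. When some of the $i_{1,j}$'s or $i_{3,m}$'s coincide the multinomial factors deserve extra care, but both sides are symmetric multilinear in the respective tuples of indices, so it suffices to verify the identity for generic distinct indices.
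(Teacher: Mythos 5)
Your proof takes a genuinely different route from the paper's. The paper proves (\ref{a2.14}) by induction on $k_{13}$, introducing the auxiliary quantities $a_l$ and $b_l$ and showing at each inductive step that applying one more $E_{i_{3,k'},n_1+1}$ converts $T(a_0)$ into $T(b_0)$. You instead verify the identity in closed form: first compute $E_{n_1+1,i_{1,1}}\cdots E_{n_1+1,i_{1,k}}(v_0)=(-1)^k x_{n_1+1}^k\prod_j x_{i_{1,j}}v_0$, then expand the commuting operators $E_{i_{3,m},n_1+1}=x_{i_{3,m}}\partial_{x_{n_1+1}}+y_{n_1+1}y_{i_{3,m}}$ as a sum over subsets $S$, and match against the expansion of $T$ using the observation that only $\sum_{s\in J_3}y_s\partial_{x_s}$ survives in $\tilde\Delta+\partial_{x_{n_1+1}}\partial_{y_{n_1+1}}$ on the relevant argument. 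Both approaches hinge on the same ingredients; yours is slicker and makes visible the combinatorics that the paper's induction builds up step by step. Your worry about repeated indices $i_{3,m}$ is in fact unfounded: the subset-sum together with $\partial_{x_{n_1+1}}^{k_{13}-|S|}(x_{n_1+1}^k)=\tfrac{k!}{(k-k_{13}+|S|)!}x_{n_1+1}^{k-k_{13}+|S|}$ handles multiplicities automatically, so no genericity reduction is needed. One small thing worth stating explicitly is that the surviving piece of $\tilde\Delta$ \emph{remains} the only surviving piece after each iterate (converting some $x_{i_3}$ into $y_{i_3}$ introduces no $Y_{J_1}$ and no $x_ry_r$-pairs in $J_2\setminus\{n_1+1\}$), which justifies replacing $(\tilde\Delta+\partial_{x_{n_1+1}}\partial_{y_{n_1+1}})^i$ by $(\sum_{s\in J_3}y_s\partial_{x_s})^i$.

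For (\ref{a2.15}), however, the ``mirror-image'' framing hides a sign asymmetry that you should have tracked rather than dismissed as bookkeeping. The operator $\tilde\pi(E_{n_1+1,i_1})=-x_{n_1+1}x_{i_1}-y_{i_1}\partial_{y_{n_1+1}}$ carries an overall minus, while $\tilde\pi(E_{i_3,n_1+1})=x_{i_3}\partial_{x_{n_1+1}}+y_{n_1+1}y_{i_3}$ does not. In (\ref{a2.14}) all $k$ of the former appear, giving $(-1)^k$; in (\ref{a2.15}) only $k_{21}$ of them appear, and carrying out your own subset expansion produces $(-1)^{k_{21}}$, not the $(-1)^k$ printed in the lemma. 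Concretely, already for $k=2,\,k_{21}=1$ one checks by hand that the two sides of (\ref{a2.15}) as stated differ by a sign (and the same mismatch propagates into the corollary's equation (\ref{a2.35})). So what your method would actually establish is the identity with $(-1)^{k_{21}}$ in place of $(-1)^k$. This appears to be a typographical slip in the paper's statement — harmless for the later applications, which use only the resulting module inclusions, not the precise constant — but your write-up should have caught the discrepancy instead of asserting that the second identity follows by a symmetry swap, which the sign structure of (\ref{a2.1})--(\ref{a2.3}) does not actually support.
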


\begin{proof}
For arbitrary $k \geq 0$, we prove (\ref{aa31}) by induction on $k_{13}$. According to (\ref{a2.1})-(\ref{a2.3}),
\begin{equation}\label{a2.16}
\td{\pi}(E_{i_2,i_1})= - x_{i_1} x_{i_2}- y_{i_1}\partial_{y_{i_2}},\;\;\td{\pi}(E_{i_3,i_2})=x_{i_3} \partial_{x_{i_2}} +y_{i_3}y_{i_2}\end{equation}
for $i_1\in J_1$, $i_2\in J_2$ and $i_3\in J_3$.

When $k_{13}=0$, the first equation of the above expression gives
\begin{align}\label{a2.17}
 E_{n_1+1,i_{1,1}}  \cdots E_{n_1+1,i_{1,k}}
 \left( v_0 \right) &=\left( -1 \right)^k   \left(v_0 \cdot
\left( \prod_{j=1}^{k}  x_{i_{1,j}}\right)  \cdot  x_{n_1+1}^{k}
 \right);
\end{align}
that is, (\ref{a2.14}) holds. Indeed, the numerator $k!$ plays this role of normalization.

Assume that when $k_{13}=k^\prime -1 \leq k-1$,
\begin{align}\begin{split}
\left( -1\right)^{k} \frac{  k  ! }{\left( k- k^\prime +1\right) !}
\  T\left(v_0 \cdot
\left( \prod_{j=1}^{k}  x_{i_{1,j}}\right)
\left( \prod_{j=1}^{k^\prime -1}  x_{i_{3,j}}\right) \cdot  x_{n_1+1}^{k- k^\prime +1}
 \right) \\
= E_{i_{3,k^\prime -1},n_1+1} \cdots  E_{i_{3,1},n_1+1}  E_{n_1+1,i_{1,1}}  \cdots E_{n_1+1,i_{1,k}}
 \left( v_0 \right) .
\end{split}\end{align}
When $k_{13}=k^\prime$,
we set
\begin{align}\label{a2.19}
a_0=\left( -1\right)^{k} \frac{   k  ! }{\left( k- k^\prime +1\right) !}
\  \left(v_0 \cdot
\left( \prod_{j=1}^{k}  x_{i_{1,j}}\right)
\left( \prod_{j=1}^{k^\prime -1}  x_{i_{3,j}}\right) \cdot  x_{n_1+1}^{k- k^\prime +1}
 \right),
\end{align}
and
\begin{eqnarray}
a_l&=&\left( -1\right)^{k} \frac{ k! }{\left( k- k^\prime +1\right) !}
\  \big(v_0 \cdot
\left( \prod_{j=1}^{k}  x_{i_{1,j}}\right)  \cdot
 x_{n_1+1}^{k- k^\prime +1} \nonumber\\
& &\times
\frac{ \left( x_{n_1+1} y_{n_1+1}\right)^l \left( k-k^\prime +1\right)! }
{\left( k-k^\prime +l+1  \right)!\
 l!   }
\tilde{\Delta}^l \left( f_0 \right)
 \big)\nonumber\\
&=&\left( -1\right)^{k} \frac{k ! }{\left( k-k^\prime +l+1  \right)!  \  l!  }
\  \big(v_0
\left( \prod_{j=1}^{k}  x_{i_{1,j}}\right)
 x_{n_1+1}^{k- k^\prime +l+1}
 y_{n_1+1}^l \cdot
\tilde{\Delta}^l \left( f_0 \right)
 \big),
\end{eqnarray}
where
\begin{align}\label{a2.21}
f_0=\prod_{j=1}^{k^\prime -1}  x_{i_{3,j}}.
\end{align}
According to (\ref{a1.7}),
\begin{equation}\label{a2.22}
\td{\Dlt}x_{i_{1,j}}=x_{i_{1,j}}\td{\Dlt},\;\;\td{\Dlt}y_{i_{3,l}}=y_{i_{3,l}}\td{\Dlt}\end{equation}
and
\begin{equation}[\td{\Dlt},v_0]=-\sum_{n_1+1\neq j\in J_2}(\ptl_{x_j}(v_0)\ptl_{y_j}+\ptl_{y_j}(v_0)\ptl_{x_j}).\end{equation}
Since
\begin{equation}\ptl_{x_{n_1+1}}\ptl_{y_{n_1+1}}( x_{n_1+1}^{k- k^\prime +1})=0,\end{equation}
we have
\begin{equation}T(a_0)=\sum_{r=0}^{k'-1}a_r\end{equation}by (\ref{a2.6}).
Thus the second equation in (\ref{a2.16}) yields
\begin{eqnarray}\label{a2.26}
&& E_{i_{3,k^\prime},n_1+1} \cdots  E_{i_{3,1},n_1+1}  E_{n_1+1,i_{1,1}}  \cdots E_{n_1+1,i_{1,k}}
 \left( v_0 \right) \nonumber\\
&=& E_{i_{3,k^\prime},n_1+1} \left(
\left( -1\right)^{k} \frac{   k  ! }{\left( k- k^\prime +1\right) !}
\  T\left(v_0 \cdot
\left( \prod_{j=1}^{k}  x_{i_{1,j}}\right)
\left( \prod_{j=1}^{k^\prime -1}  x_{i_{3,j}}\right) \cdot  x_{n_1+1}^{k- k^\prime +1}
 \right)
\right)\nonumber\\
&=& E_{i_{3,k^\prime},n_1+1}(T(a_0))\nonumber\\
&=&\left( x_{i_{3,k^\prime }} \partial_{x_{n_1+1}} \right)\left( \sum_{l=0}^{k^\prime} a_l\right)+ y_{i_{3,k^\prime }} y_{n_1+1}\left( \sum_{l=0}^{k^\prime} a_l\right)\nonumber\\
&=&x_{i_{3,k^\prime }} \partial_{x_{n_1+1}} \left(a_0\right) + \sum_{l=1}^{k^\prime} \left(  y_{i_{3,k^\prime }} y_{n_1+1} a_{l-1}+ x_{i_{3,k^\prime }} \partial_{x_{n_1+1}} \left(  a_l\right) \right).
\end{eqnarray}
We set
\begin{equation}\label{a2.27}
b_0=x_{i_{3,k^\prime }} \partial_{x_{n_1+1}} \left(a_0\right)=\left( -1\right)^{k} \frac{   k  ! }{\left( k- k^\prime\right) !}
\  \left(v_0 \cdot
\left( \prod_{j=1}^{k}  x_{i_{1,j}}\right)
\left( \prod_{j=1}^{k^\prime}  x_{i_{3,j}}\right) \cdot  x_{n_1+1}^{k- k^\prime}
 \right)\end{equation}
by (\ref{a2.19}) and
\begin{equation}
 b_l =  y_{i_{3,k^\prime }} y_{n_1+1} a_{l-1}+ x_{i_{3,k^\prime }} \partial_{x_{n_1+1}} \left(  a_l\right)\qquad\for\;\;l \geq 1.\end{equation}
We will show that (\ref{a2.26}) is equivalent to $T \left( b_0 \right)$.

According to (\ref{a1.7}),
\begin{equation}\label{a2.29}
[\td{\Dlt}, x_{i_3}]= y_{i_3}\qquad\for\;\;i_3\in J_3.\end{equation}
Moreover, (\ref{a2.29}) and the second equation in (\ref{a2.22}) show
\begin{equation}[\td{\Dlt}^{s+1}, x_{i_3}]=(s+1)y_{i_3}\td{\Dlt}^s\qquad\for\;\;s\in\mbb N.\end{equation}
Thus for each $l\in\mbb N$, we have
\begin{align}
\tilde{\Delta}^l \left( f_0 \cdot x_{i_3}\right)
=x_{i_3} \tilde{\Delta}^l \left( f_0 \right) + l  \tilde{\Delta}^{l-1} \left( f_0 \right) y_{i_3}.
\end{align}
Therefore,
\begin{eqnarray}
b_l &=&  y_{i_{3,k^\prime }} y_{n_1+1} a_{l-1}+ x_{i_{3,k^\prime }} \partial_{x_{n_1+1}} (  a_l)\nonumber\\
&= &
( -1)^{k} \frac{ k ! }{( k-k^\prime +l)!  \  (l-1)!  }
\  \big(y_{i_{3,k^\prime }} y_{n_1+1} \cdot
v_0
\big( \prod_{j=1}^{k}  x_{i_{1,j}}\big)
 x_{n_1+1}^{k- k^\prime +l}
 y_{n_1+1}^{l-1} \cdot
\tilde{\Delta}^{l-1} ( f_0)
 \big)+
\nonumber\\
& &x_{i_{3,k^\prime }} \partial_{x_{n_1+1}} \big(
( -1)^{k} \frac{ k ! }{( k-k^\prime +l+1)!  \  l!  }
\  \big(
v_0
\big( \prod_{j=1}^{k}  x_{i_{1,j}}\big)
 x_{n_1+1}^{k- k^\prime +l+1}
 y_{n_1+1}^l \cdot
\tilde{\Delta}^l ( f_0)
 \big)
 \big)\nonumber\\
&=&( -1)^{k} \frac{k ! }{( k-k^\prime +l)!  \  (l-1)!  }
\  \big(
v_0
\big( \prod_{j=1}^{k}  x_{i_{1,j}}\big)
 x_{n_1+1}^{k- k^\prime }
(  x_{n_1+1} y_{n_1+1}) ^{l}
 ) \cdot
\tilde{\Delta}^{l-1} ( f_0 )   y_{i_{3,k^\prime }}\nonumber\\&
&+ ( -1)^{k} \frac{k ! }{( k-k^\prime +l)!  \  l!  }
\big(
v_0
\big( \prod_{j=1}^{k}  x_{i_{1,j}}\big)
 x_{n_1+1}^{k- k^\prime  }
(  x_{n_1+1}y_{n_1+1})^l \cdot
\tilde{\Delta}^l ( f_0)
 \big) x_{i_{3,k^\prime }}\nonumber
\\
&=&  \frac{( -1)^{k} k ! }{( k-k^\prime +l )!  \  l!  }
\  \big(
v_0
\big( \prod_{j=1}^{k}  x_{i_{1,j}}\big)
 x_{n_1+1}^{k- k^\prime }
(  x_{n_1+1} y_{n_1+1}) ^{l}
 \big)
( l  \tilde{\Delta}^{l-1}( f_0)   y_{i_{3,k^\prime }}  +  x_{i_{3,k^\prime }} \tilde{\Delta}^l ( f_0))\nonumber\\
&=&  \frac{( -1)^{k} k ! }{( k-k^\prime +l)!  \  l!  }
\  \big(
v_0
\big( \prod_{j=1}^{k}  x_{i_{1,j}}\big)
 x_{n_1+1}^{k- k^\prime }
(  x_{n_1+1} y_{n_1+1}) ^{l}
 \big) \cdot \tilde{\Delta}^l ( f_0 x_{i_{3,k^\prime }})\nonumber\\
&=&  \frac{( -1)^{k} k ! }{( k-k^\prime +l)!  \  l!  }
\  \big(
v_0
\big( \prod_{j=1}^{k}  x_{i_{1,j}}\big)
 x_{n_1+1}^{k- k^\prime }
(  x_{n_1+1} y_{n_1+1}) ^{l}
 \big) \cdot (\tilde\Delta+\partial_{x_{n_1+1}}\partial_{ y_{n_1+1}})^l ( f_0 x_{i_{3,k^\prime }})\nonumber\\
&=&  \frac{(  x_{n_1+1} y_{n_1+1}) ^{l}}{\big(\prod_{r=1}^l(k-k' +r)\big)  \  l! }
\
 (\tilde\Delta+\partial_{x_{n_1+1}}\partial_{ y_{n_1+1}})^l\left(\frac{( -1)^{k} k !}{(k-k')!} v_0
\big( \prod_{j=1}^{k}  x_{i_{1,j}}\big)
\big( \prod_{j=1}^{k^\prime}  x_{i_{3,j}}\big) x_{n_1+1}^{k- k^\prime }
 \right)\nonumber\\
&=&\frac{(x_{n_1+1} y_{n_1+1})^l}{\big(\prod_{r=1}^l(k-k' +r)\big)l!}
 (\tilde\Delta+\partial_{x_{n_1+1}}\partial_{ y_{n_1+1}})^l
( b_0)
\end{eqnarray}
by (\ref{a2.21}) and (\ref{a2.27}).
Thus
\begin{align}
\sum^{k^\prime }_{l=0} b_l =T\left( b_0 \right).
\end{align}

The proof of the second formula ($\ref{aa32}$) is similar to the first one.

\end{proof}

\begin{corollary}Fix $k,\alpha_{n_1+1},\beta_{n_1+1} \in\mbb N$.
For $j$, $i_{1,j} \in J_1$, $i_{3,j} \in J_3$, $v_1\in \mathbb{F}[X_{J_1},X_{J_2 \setminus \{n_1+1\}},\\ Y_{J_3}] \cup  \mathbb{F}\left[X_{J_1},Y_{J_2 \setminus \{n_1+1\}},Y_{J_3} \right]$, we have
\begin{align}\label{a2.34}
\begin{split}
 \frac{k! }{ \alpha_{n_1+1}  !} \  T\left( v_1
\left( \prod_{j=1}^{k- \alpha_{n_1+1} }  x_{i_{3,j}}\right) \cdot  x_{n_1+1}^{\alpha_{n_1+1}}
 \right) =
 E_{i_{3,k-\al_{n_1+1}},n_1+1} \cdots  E_{i_{3,1},n_1+1}
 \left( v_1 x_{n_1+1}^{k} \right) ,
\end{split}
\end{align}
when $k \geq \alpha_{n_1+1}$,
and
\begin{align}\label{a2.35}
\begin{split}
 \frac{k! }{ \beta_{n_1+1}  !} \  T\left(v_1
\left( \prod_{j=1}^{k-\beta_{n_1+1} }  y_{i_{1,j}}\right) \cdot  y_{n_1+1}^{\beta_{n_1+1}}
 \right) =
 E_{n_1+1,i_{1,1}}  \cdots E_{n_1+1,i_{1,k-\be_{n_1+1}}}
 \left( v_1 y_{n_1+1}^{k } \right) ,
\end{split}
\end{align}
where $k \geq \beta_{n_1+1}$.
\end{corollary}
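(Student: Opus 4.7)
My plan is to establish (\ref{a2.34}) by induction on $p := k - \al_{n_1+1}$, the number of $E_{i_{3,j}, n_1+1}$ operators appearing on the right-hand side, following the template of Lemma \ref{alem:1} but with the $x_{i_{1,j}}$ factors (and the $E_{n_1+1, i_{1,j}}$ operators that would produce them) absorbed into $v_1$. Formula (\ref{a2.35}) then follows by the symmetric argument interchanging $x$ with $y$ and $J_1$ with $J_3$.

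For the base case $p = 0$, one has $\al_{n_1+1} = k$, so (\ref{a2.34}) reduces to the assertion $T(v_1 x_{n_1+1}^k) = v_1 x_{n_1+1}^k$. From (\ref{a2.6}) it suffices to check that $(\td\Dlt + \ptl_{x_{n_1+1}}\ptl_{y_{n_1+1}})(v_1 x_{n_1+1}^k) = 0$. Since $y_{n_1+1}$ is absent from $v_1 x_{n_1+1}^k$, the $\ptl_{x_r}\ptl_{y_r}$-term with $r = n_1+1$ vanishes; $Y_{J_1}$ is absent from $v_1$, killing the $x_i \ptl_{y_i}$ summands; $X_{J_3}$ is absent, killing the $y_s \ptl_{x_s}$ summands; and depending on whether $v_1$ lies in the first or second component of the union, either $Y_{J_2 \setminus \{n_1+1\}}$ or $X_{J_2 \setminus \{n_1+1\}}$ is absent, killing the remaining $\ptl_{x_r}\ptl_{y_r}$ summands with $r \in J_2 \setminus \{n_1+1\}$. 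Hence only the $i = 0$ term of the series defining $T$ in (\ref{a2.6}) contributes.

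For the inductive step, assume (\ref{a2.34}) for $p - 1$ and apply $E_{i_{3,p}, n_1+1} = x_{i_{3,p}} \ptl_{x_{n_1+1}} + y_{i_{3,p}} y_{n_1+1}$ to both sides. Taking $a_0$ analogous to (\ref{a2.19}) (with $v_1$ in place of $v_0 \prod_{j=1}^k x_{i_{1,j}}$), decomposing $T(a_0) = \sum_l a_l$ via (\ref{a2.6}), defining $b_l = y_{i_{3,p}} y_{n_1+1} a_{l-1} + x_{i_{3,p}} \ptl_{x_{n_1+1}}(a_l)$, and invoking the commutator identity $[\td\Dlt^{s+1}, x_{i_3}] = (s+1) y_{i_3} \td\Dlt^s$, one concludes $\sum_l b_l = T(b_0)$ exactly as in (\ref{a2.19})--(\ref{a2.32}) of Lemma \ref{alem:1}, with $b_0$ the target polynomial at stage $p$. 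The main and only obstacle is the case analysis in the base step; the inductive step reuses the calculation from Lemma \ref{alem:1} essentially verbatim, so no new ideas are needed.
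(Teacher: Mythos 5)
Your proposal is correct and rests on the same underlying calculation as the paper's, but you package it differently. The paper deduces the corollary from the statement of Lemma \ref{alem:1}: it specializes (\ref{a2.14}) using (\ref{a2.17}) to get (\ref{a2.36}), reindexes $k_{13}=k-\al_{n_1+1}$, and then invokes the observation that the proof of Lemma \ref{alem:1} still works when the polynomial $v_0$ is replaced by the rational function $v_1/\prod_{j=1}^k x_{i_{1,j}}$. You instead re-run the induction of Lemma \ref{alem:1} directly with $v_1$ playing the role of $v_0\prod_{j=1}^k x_{i_{1,j}}$, inducting on $p=k-\al_{n_1+1}$ (which is $k_{13}$ in the lemma). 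This sidesteps the rational-function substitution — a touch cleaner, since the only properties of $v_0\prod x_{i_{1,j}}$ that the lemma's proof uses (membership in $\mathbb{F}[X_{J_1},X_{J_2\setminus\{n_1+1\}},Y_{J_3}]\cup\mathbb{F}[X_{J_1},Y_{J_2\setminus\{n_1+1\}},Y_{J_3}]$ and the resulting vanishing of the relevant derivatives) are precisely the hypotheses on $v_1$. Your term-by-term check in the base case that $(\td\Dlt+\ptl_{x_{n_1+1}}\ptl_{y_{n_1+1}})(v_1 x_{n_1+1}^k)=0$ is correct, and you are right that the inductive step carries over verbatim from the lemma's $a_l,b_l$ bookkeeping. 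So the argument is sound and essentially the paper's, differing only in whether Lemma \ref{alem:1} is cited as a black box with a rational-function trick or re-proved in the absorbed form.
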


\begin{proof}  By (\ref{a2.14}) and (\ref{a2.17}), we have

\begin{eqnarray}\label{a2.36}
& &
 \frac{k! }{\left( k- k_{13}\right) !} \  T\left( v_0 \cdot
\left( \prod_{j=1}^{k}  x_{i_{1,j}}\right)
\left( \prod_{j=1}^{k_{13}}  x_{i_{3,j}}\right) \cdot  x_{n_1+1}^{k-k_{13}}
 \right)\nonumber\\& =&
 E_{i_{3,k_{13}},n_1+1} \cdots  E_{i_{3,1},n_1+1}  E_{n_1+1,i_{1,1}}  \left(v_0 \cdot
\left( \prod_{j=1}^{k}  x_{i_{1,j}}\right)  \cdot  x_{n_1+1}^{k}
 \right).
\end{eqnarray}
Redenote
\begin{equation} k_{13}=k-\al_{n_1+1}.\end{equation}
Then (\ref{a2.36}) becomes
\begin{eqnarray}\label{a2.38}
& &
 \frac{k! }{\al_{n_1+1}!} \  T\left( v_0 \cdot
\left( \prod_{j=1}^{k}  x_{i_{1,j}}\right)
\left( \prod_{j=1}^{k-\al_{n_1+1}}  x_{i_{3,j}}\right) \cdot  x_{n_1+1}^{k-k_{13}}
 \right)\nonumber\\& =&
 E_{i_{3,k-\al_{n_1+1}},n_1+1} \cdots  E_{i_{3,1},n_1+1}  E_{n_1+1,i_{1,1}}  \left(v_0 \cdot
\left( \prod_{j=1}^{k}  x_{i_{1,j}}\right)  \cdot  x_{n_1+1}^{k}
 \right).
\end{eqnarray}
Note that in the proof of Lemma 2.1, all the arguments still work if we replace the polynomial $v_0$ by the rational function
\begin{equation}v_0=\frac{v_1}{\prod_{j=1}^{k}  x_{i_{1,j}}}.\end{equation}
Substituting the above equation into (\ref{a2.38}), we obtain (\ref{a2.34}). Equation (\ref{a2.35}) can be obtained similarly.
\end{proof}

For $m\in\mbb N$ and $r\in\{1,2,3\}$, we denote
\begin{equation}\label{a2.40}
X^{m}_{J_r}=\{x_{j_1}^{\alpha_{j_1}}  x_{j_2}^{\alpha_{j_2}}\dots x_{j_k}^{\alpha_{j_k}}
 \mid \{j_1,...,j_k\}=J_r;\;\al_{j_1},...,\al_{j_k}\in\mbb N;\;  \sum_{i=1}^k  \alpha_{j_i}=m \}\end{equation}
and
\begin{equation}\label{a2.41}
Y^{m}_{J_r}=\{y_{j_1}^{\alpha_{j_1}}  y_{j_2}^{\alpha_{j_2}}\dots y_{j_k}^{\alpha_{j_k}}
\mid \{j_1,...,j_k\}=J_r;\;\al_{j_1},...,\al_{j_k}\in\mbb N;\;  \sum_{i=1}^k  \alpha_{j_i}=m \}.\end{equation}
Next we want to introduce a degree function ${\mfk d} : {\msr A}_{\la\ell_1,\ell_2\ra}  \rightarrow \mathbb{N}$ which can help us
determine $M_k/M_{k-1}$. Our motivation comes from the facts as follow.
Suppose $\ell_1=-m_1$ and $\ell_2=-m_2$ with $m_1,m_2 \in \mathbb{N}$. By (\ref{a2.11}), we have
\begin{equation}M_0=\mbox{Span}\{X_{J_1}^{m_1}Y_{J_3}^{m_2}\}.\end{equation}
 Applying $E_{i_2,i_1}$ and $E_{i_3,i_2}$ to $M_0$ with $i_r\in J_r$, we have
\begin{align}
\{X^{m_1+1}_{J_1}x_{i_2}Y^{m_2}_{J_3} ,X^{m_1 }_{J_1}y_{i_2} Y^{m_2+1}_{J_3}  \mid
i_2  \in J_2\}\subset M_1\setminus M_0
\end{align}
by (\ref{a2.1})-(\ref{a2.3}).
Fix $k,k_{13},k_{21}\in\mbb N$. For each $j$, $i_{1,j} \in J_1$, $i_{3,j} \in J_3$, $v_0\in \mathbb{F}\left[X_{J_1},X_{J_2 \setminus \{n_1+1\}},Y_{J_3} \right] \cup  \mathbb{F}\left[X_{J_1},Y_{J_2 \setminus \{n_1+1\}},Y_{J_3} \right]$, we have
\begin{eqnarray}
& &
( -1)^{k} k! \  T\left( X_{J_1}^{m_1}Y^{m_2}_{J_3}
\left( \prod_{j=1}^{k}  x_{i_{1,j}}\right)
\left( \prod_{j=1}^{k}  x_{i_{3,j}}\right)
 \right)\nonumber\\& =&
 E_{i_{3,k},n_1+1} \cdots  E_{i_{3,1},n_1+1}  E_{n_1+1,i_{1,1}}  \cdots E_{n_1+1,i_{1,k}}
 \left( X_{J_1}^{m_1}Y^{m_2}_{J_3}  \right)\subset M_{2k}.
\end{eqnarray}
When $k_{21}=k$ in (\ref{a2.15}), we find
\begin{eqnarray}& &
( -1)^{k} k! \   T\left(X_{J_1}^{m_1}Y^{m_2}_{J_3}
\left( \prod_{j=1}^{k}  y_{i_{1,j}}\right)
\left( \prod_{j=1}^{k }  y_{i_{3,j}}\right)
 \right) \nonumber\\&=&
 E_{n_1+1,i_{1,1}}  \cdots E_{n_1+1,i_{1,k}}  E_{i_{3,1},n_1+1} \cdots  E_{i_{3,k},n_1+1}
 \left(X_{J_1}^{m_1}Y^{m_2}_{J_3} \right)\subset M_{2k}.
\end{eqnarray}
The above three expressions motivate us to define ${\mfk d} : {\msr A}_{\la\ell_1,\ell_2\ra}  \rightarrow \mathbb{N}$ by
\begin{align}
&{\mfk d} \left(x^\alpha y^\beta \right) =2 \sum_{i \in J_3} \alpha_i +\sum_{i \in J_2}\alpha_i +2\sum_{i \in J_1}\beta_i+\sum_{i \in J_2}\beta_i -\frac{\ell_1+|\ell_1|+\ell_2+|\ell_2|}{2}, \label{a2.46}\\
&{\mfk d} \left( \lmd f\right) ={\mfk d} \left( f\right) , \  \forall \lmd   \in \mathbb{F} \setminus \{0\} ,\\
&{\mfk d}\left(\sum_{i=1}^m  f_i\right) =\max \{{\mfk d}\left(f_1 \right) ,\cdots,{\mfk d}\left(f_m \right) \},\;\;\text{where $f_i$ are monomials.}
\label{a2.48}
\end{align}
The last term in (\ref{a2.46}) is to ensure $\mfk d(M_0)=0$.

For $\al,\be\in\mbb N^n$ with  $\alpha_{n_1+1} \beta_{n_1+1} =0$ and $ \left(x_{n_1+1} y_{n_1+1}\right)  \left(\tilde{\Delta}+\partial_{x_{n_1+1}}\partial_{ y_{n_1+1}}\right)  \left( x^\alpha y^\beta \right) \neq 0$, we have
\begin{eqnarray}&
&{\mfk d}( (x_{n_1+1} y_{n_1+1})
(\tilde{\Dlt}+\partial_{x_{n_1+1}}\partial_{ y_{n_1+1}})( x^\alpha y^\beta))\nonumber \\
&=&{\mfk d}((x_{n_1+1} y_{n_1+1})
( \sum^{n_1}_{i=1} x_i \partial_{y_i}-\sum^{n_2}_{r=n_1+2} \partial_{x_r} \partial_{y_r}+\sum^{n}_{s=n_2+1} y_s \partial_{x_s}
)( x^\alpha y^\beta))\nonumber\\
&=&\max \{{\mfk d}( x_{n_1+1} y_{n_1+1} x_i \partial_{y_i}( x^\alpha y^\beta )),\;
{\mfk d}(  x_{n_1+1} y_{n_1+1}\partial_{x_r} \partial_{y_r}( x^\alpha y^\beta)),
\nonumber\\\pse
 && {\mfk d}(  x_{n_1+1} y_{n_1+1}  y_s \partial_{x_s}( x^\alpha y^\beta))\mid
i\in J_1, n_1+2 \leq r \leq n_2, s \in J_3\}\nonumber\\\pse
&=&{\mfk d} \left(x^\alpha y^\beta\right).
\end{eqnarray}
In general,
\begin{align}
{\mfk d}((x_{n_1+1} y_{n_1+1})^i
(\tilde{\Delta}+\partial_{x_{n_1+1}}\partial_{ y_{n_1+1}})^i ( x^\alpha y^\beta))
={\mfk d} (x^\alpha y^\beta) \text{ or } 0,
\label{a2.49}\end{align}
where $\alpha_{n_1+1} \beta_{n_1+1} =0$.

Recall that $T$ is injective. Hence
\begin{align}
{\mfk d} (T( x^\alpha y^\beta)) ={\mfk d}(x^\alpha y^\beta),\quad\text{ where $ \alpha_{n_1+1} \beta_{n_1+1} =0$.}
\label{a2.50}\end{align}

For convenience, we denote
\begin{align}\label{a2.51}
L_3=J_3 \times J_1, \quad
L_2=J_3\times J_2,\quad
L_1=J_2\times J_1,\quad
L=L_1 \cup L_2 \cup L_3.
\end{align}

From the above properties, we have
\begin{align}
{\mfk d} \left( E_{i,j}(f) \right)  \leq
\begin{cases}
{\mfk d} \left( f \right)+2  &\text{if} \left( i,j\right)  \in L_3, \\
{\mfk d} \left( f \right)+1  &\text{if}  \left( i,j\right)  \in   L_1  \cup   L_2,  \\
{\mfk d} \left( f \right) &\text{if}  \left( i,j\right)  \notin  L.
\end{cases}
\label{a2.52}
\end{align}

We set
\begin{equation}\label{a2.53}
TN(k) = \text{Span} \{  T (x^\alpha y^\beta )      \mid
{\mfk d} \left(x^\alpha y^\beta  \right) = k ,x^\alpha y^\beta   \in {\msr A}_{\left \langle \ell_1,\ell_2  \right \rangle  }, \alpha_{n_1+1} \beta_{n_1+1} =0  \},\end{equation}
\begin{equation}\label{a2.54}
TN_k = \text{Span} \{  T (x^\alpha y^\beta )      \mid
{\mfk d} \left(x^\alpha y^\beta  \right)\leq k ,x^\alpha y^\beta   \in {\msr A}_{\left \langle \ell_1,\ell_2  \right \rangle  }, \alpha_{n_1+1} \beta_{n_1+1} =0  \}.\end{equation}
According to (\ref{a2.1})-(\ref{a2.3}), the alternating polynomials
\begin{equation}\label{a2.55}
x_{i_1} x_{i_3}-y_{i_1} y_{i_3}=-\td{\pi}(E_{i_3,i_1})\qquad\for\;\; i_1 \in J_1,\; i_3\in J_3 \end{equation}
(cf. (\ref{aa1.13})). By (\ref{a2.7}),
\begin{equation}\label{a2.56}
T(f)(x_{i_1} x_{i_3}-y_{i_1} y_{i_3})=T(f(x_{i_1} x_{i_3}-y_{i_1} y_{i_3}))\qquad\for\;\;f\in\msr A.\end{equation}
Set
\begin{align}\label{a2.57}
P = \{x_{i_1} x_{i_3}-y_{i_1} y_{i_3} \mid i_1 \in J_1, i_3\in J_3 \}.
\end{align}
Then for $r\in\mbb N$,
\begin{eqnarray}& &TN\begin{pmatrix}
k_{12} & k_{12} & k_{13} \\
k_{21} & k_{22} &  k_{23}
\end{pmatrix}P^r \nonumber\\& =&\text{Span} \{
 T\left(f \right) p_1 \cdot p_2 \cdots p_r \mid
f \in N\begin{pmatrix}
k_{12} & k_{12} & k_{13} \\
k_{21} & k_{22} &  k_{23}
\end{pmatrix},
p_i \in P,1\leq i\leq r
\} \nonumber\\
& =& T\left( N\begin{pmatrix}
k_{12} & k_{12} & k_{13} \\
k_{21} & k_{22} &  k_{23}
\end{pmatrix}P^r
\right)\subset \msr H_{\la\ell_1,\ell_2\ra}.
\end{eqnarray}
Moreover, we have
\begin{align}
{\mfk d}\left(f \cdot p \right) = {\mfk d}\left(f \right)+2\qquad \for\;\; p\in P,\  0\neq f  \in  {\msr A}_{\left \langle \ell_1,\ell_2  \right \rangle  }.
\label{a2.59}
\end{align}

Set
\begin{align}\label{a2.60}
V_k &= \text{Span}\{
TN\left(i\right),TN\left(k-i\right) P^i \mid
i=0,1,\dots,k\}
\nonumber\\
&=\text{Span}\{
TN_k,TN\left(k-i\right) P^i \mid
i=1,\dots,k\}.
\end{align}
Then $M_0=V_0=TN_0=TN(0)$.  Denote $\mfk g=sl(n)$.

\begin{proposition} For $k\in \mbb N$, we have $M_k=U_k(\mfk g)(M_0)=V_k$. Moreover, $M= {\msr H}_{\la\ell_1,\ell_2\ra}=U(\mfk g)(M_0)$.
\label{ap2.1}\end{proposition}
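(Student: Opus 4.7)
I would establish the equality $V_k = M_k$ by induction on $k$, with the base case $V_0 = TN(0) = M_0$ following by inspection of definitions (\ref{a2.11}) and (\ref{a2.53}) in each of the three sign-cases for $(\ell_1,\ell_2)$. A preliminary observation I will use throughout is that, because $p\in P$ satisfies $T(f)\cdot p = T(f\cdot p)$ by (\ref{a2.56}) and $p = -\td{\pi}(E_{i_3,i_1})$ by (\ref{a2.55}), the family $V_k$ really is an increasing filtration: an element $T(g)p_1\cdots p_i\in TN(k-1-i)P^i\subseteq V_{k-1}$ can be rewritten as $T(gp_1)p_2\cdots p_i\in TN(k+1-i)P^{i-1}\subseteq V_k$.

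For the inclusion $V_k\subseteq M_k$, I split the spanning vectors into two groups. For elements of $TN(k-i)P^i$ with $1\leq i\leq k$, the identification $p=-\td{\pi}(E_{i_3,i_1})$ shows that each $T(g)p_1\cdots p_i$ equals $(-1)^i\td{\pi}(E_{i_3^{(1)},i_1^{(1)}})\cdots\td{\pi}(E_{i_3^{(i)},i_1^{(i)}})(T(g))$ with $T(g)\in TN(k-i)\subseteq V_{k-i}\subseteq M_{k-i}$ by induction, placing the product inside $U_i(\mfk g)(M_{k-i})\subseteq M_k$. For elements of $TN(k)$, I invoke Lemma \ref{alem:1} and its corollary: the identities (\ref{a2.14}), (\ref{a2.15}), (\ref{a2.34}), (\ref{a2.35}) realise $T$ of a chosen monomial product as a composite of exactly $k$ operators $E_{i_3,n_1+1}\in L_2$ and $E_{n_1+1,i_1}\in L_1$ (together with commuting ${\mfk d}$-preserving operators) applied to an element of $M_0$, and a case analysis on the exponent $(\alpha,\beta)$ and on the sign-case of $(\ell_1,\ell_2)$ shows that every $T(x^\alpha y^\beta)$ with ${\mfk d}(x^\alpha y^\beta)=k$ is captured by at least one of these identities.

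For $M_k\subseteq V_k$, it suffices to prove $\td{\pi}(E_{i,j})(V_{k-1})\subseteq V_k$ for every generator $E_{i,j}$. Using (\ref{a2.52}) I split into three cases. If $(i,j)\in L_3$, then $\td{\pi}(E_{i,j})=-p$ for some $p\in P$, so multiplication sends $TN(k-1-s)P^s$ into $TN(k-(s+1))P^{s+1}\subseteq V_k$ and sends $TN(m)$ with $m\leq k-1$ into $TN(m)P\subseteq V_{m+1}\subseteq V_k$. If $(i,j)\notin L$, then ${\mfk d}$ is preserved, and I compute the Leibniz expansion of $\td{\pi}(E_{i,j})$ against a typical basis vector $T(g)p_1\cdots p_s$, using the explicit formulas in (\ref{a2.1})--(\ref{a2.3}) to show each term remains within a $V_k$-generator. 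If $(i,j)\in L_1\cup L_2$, ${\mfk d}$ grows by at most one; after Leibniz-expanding and rewriting via the commutation (\ref{a2.7}) and the formulas (\ref{a2.14})--(\ref{a2.35}), I express the image as a linear combination of $V_k$-generators.

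The final statement $M=\msr H_{\la\ell_1,\ell_2\ra}=U(\mfk g)(M_0)$ follows by taking unions: every basis vector $T(x^\alpha y^\beta)$ of (\ref{a2.8}) has a finite ${\mfk d}$-degree $m$ and hence lies in $TN(m)\subseteq V_m=M_m$. The main obstacle in this plan will be the $(i,j)\in L_1\cup L_2$ case of the reverse inclusion, where the Leibniz expansion of $\td{\pi}(E_{i,j})$ against $p_1\cdots p_s$ can ``consume'' one factor $p_\ell$ by differentiation, replacing it with a single variable and formally lowering the $P$-order; showing that the resulting terms still fall into the span of $V_k$-generators requires the explicit commutation (\ref{a2.7}) of $T$ with $\td{\pi}(E_{i,j})$ for $i,j\neq n_1+1$ together with the rewriting identity $T(f)\cdot p=T(f\cdot p)$ and the degree accounting (\ref{a2.59}) to balance the ${\mfk d}$-shift against the drop in $P$-order.
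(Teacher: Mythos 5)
Your plan for $M_k\subseteq V_k$ is correct in outline and differs from the paper's only in presentation: where you propose to Leibniz-expand the (possibly second-order) differential operator $\td\pi(E_{i,j})$ against $T(g)p_1\cdots p_s$, the paper rewrites any element of $TN_{k-r-1}P^r$ as $E_{i_1,j_1}\cdots E_{i_r,j_r}(g')$ with each $(i_s,j_s)\in L_3$ and then commutes $E_{i,j}$ past the factors abstractly, so that the Leibniz ``cross terms'' appear as Lie brackets $[E_{i,j},E_{i_s,j_s}]$ landing in $U_r(\mfk g)(M_{k-r-1})=M_{k-1}\subseteq V_k$. That commutator form is what saves the bookkeeping you anticipate as the ``main obstacle''; adopting it eliminates the case-by-case Leibniz analysis entirely.

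The genuine gap is in the inclusion $TN(k)\subseteq M_k$. You assert that the four identities (\ref{a2.14}), (\ref{a2.15}), (\ref{a2.34}), (\ref{a2.35}) plus ``a case analysis on $(\alpha,\beta)$'' capture every $T(x^\alpha y^\beta)$ of $\mfk d$-degree $k$ as a composite of exactly $k$ operators $E_{i_3,n_1+1}$, $E_{n_1+1,i_1}$ and $\mfk d$-preserving operators applied to $M_0$. This is false: each of those identities produces only monomials of a restricted shape. In (\ref{a2.14}) the auxiliary factor $v_0$ lies in $\mathbb F[X_{J_1},X_{J_2\setminus\{n_1+1\}},Y_{J_3}]\cup\mathbb F[X_{J_1},Y_{J_2\setminus\{n_1+1\}},Y_{J_3}]$ (a union, not a product), so it cannot carry both $x_{J_2}$- and $y_{J_2}$-factors; furthermore, for the right-hand side to land in $M_{\bullet}$ one must take $v_0\in M_0$, which in the case $\ell_1,\ell_2\le 0$ forces $v_0\in\mbox{Span}\{X_{J_1}^{m_1}Y_{J_3}^{m_2}\}$. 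Consequently a monomial such as $X_{J_1}^{m_1}Y_{J_3}^{m_2}\,x_{i_1}x_{i_2}\,y_{i_1'}y_{i_2'}$ with $i_2,i_2'\in J_2\setminus\{n_1+1\}$ is not reached by any of (\ref{a2.14})--(\ref{a2.35}) on their own. The paper bridges this gap with a chain of further inductions --- (\ref{a2.69}), (\ref{a2.78}), (\ref{a2.88}) in Case 1, and (\ref{a2.107}), (\ref{a2.111}), (\ref{a2.113}) in Case 2 --- built on the rewriting tricks $T(h\,y_{i_1}y_{i_3})=-T\big(h(x_{i_1}x_{i_3}-y_{i_1}y_{i_3})\big)+T(h\,x_{i_1}x_{i_3})$ and $T(x_{i_1}x_{i_2}h)=-E_{i_2,i_1}(T(h))-T(y_{i_1}\ptl_{y_{i_2}}h)$, which inject $y_{J_1},y_{J_3}$ pairs and $J_2$-variables one at a time. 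These arguments use the operators $E_{i_3,i_1}\in L_3$, $E_{i_2,i_1}\in L_1$, $E_{i_3,i_2}\in L_2$, none of which is $\mfk d$-preserving and none of which appears in (\ref{a2.14})--(\ref{a2.35}), so they cannot be dismissed as part of your parenthetical ``commuting $\mfk d$-preserving operators.'' Your plan omits this entire layer of the argument, which is where most of the work in the paper's proof of $V_k\subseteq M_k$ actually lies; without it the claimed inclusion $TN(k)\subseteq M_k$ does not follow.
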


\begin{proof} We prove $M_i=V_i$ for any $i\in\mbb N$ by induction. It holds when $i=0$.
Suppose that $M_i=V_i$ with $i \leq k - 1$. Consider $i=k$. Note
\begin{align}\label{a2.61}
M_k=U_k\left(\mfk g \right)(M_0) = \mfk g( M_{k-1})=\mfk g(V_{k-1}).
\end{align}
Let $ f \in TN_{k-1}$.
If $\left( i,j\right)  \notin L_3 $,
\begin{align}
{\mfk d} \left( E_{i,j}(f) \right)  \leq
{\mfk d} \left( f \right)+1\ \
\Rightarrow E_{i,j}(f) \in V_k.
\end{align}
When $\left( i,j\right)  \in L_3 $, (\ref{a2.55}) yields
\begin{equation}
E_{i,j}(f) \in TN_{k-1} P \subset V_k.\end{equation}

Fix  $g \in TN_{k-r-1} P^r$ with $r\in\ol{1,k-1}$.
If $\left( i,j\right)  \in L_3 $, then
\begin{equation}E_{i,j}(g)\in TN_{k-r-1} P^{r+1}  \subset V_k.\end{equation}
 According (\ref{a2.55})-(\ref{a2.57}), we may assume
\begin{equation}g=E_{i_1,j_1}\cdots E_{i_r,j_r}(g')\end{equation}
with $g'\in TN_{k-r-1}\subset V_{k-r-1}=M_{k-r-1}$ and $(i_1,j_1),...,(i_r,j_r)\in L_3$.
When $\left( i,j\right)  \notin L_3 $,
 \begin{eqnarray}\label{a2.66}
E_{i,j}(g)&=&\sum_{s=1}^r (E_{i_1,j_1}\cdots E_{i_{s-1},j_{s-1}}[E_{i,j},E_{i_s,j_s}]
  E_{i_{s+1},j_{s+1}}\cdots  E_{i_r,j_r}(g')) \nonumber\\
  & &+E_{i_1,j_1}\cdots E_{i_r,j_r}E_{i,j}(g')\subset M_{k-1}+P^rTN_{k-r}\subset V_k\end{eqnarray}
 by (\ref{a2.60}). Thus we have proved $M_k\subset V_k$.

Next we want to prove $V_k \subset M_k$. Since the case of $\ell_2\leq 0\leq \ell_1 $ and the case of  $\ell_1\leq 0\leq \ell_2$ are symmetric, it is enough to prove it in the following two cases.\psp

{\it Case 1. $\ell_1,\ell_2\leq 0$.}\psp

We write $\ell_1=-m_1$, $\ell_2=-m_2$, where $m_1,m_2 \in \mathbb{N}$. For $k_{13},\alpha_{n_1+1}\in \mbb N$,  Lemma 2.1 gives
\begin{align*}
\left( -1\right)^{k_{13}+\alpha_{n_1+1}} \frac{(k_{13}+\alpha_{n_1+1})! }{\alpha_{n_1+1} !} \  T\left(X_{J_1}^{m_1} X_{J_3}^{m_2} \cdot
\left( \prod_{j=1}^{k_{13}+\alpha_{n_1+1}}  x_{i_{1,j}}\right)
\left( \prod_{j=1}^{k_{13}}  x_{i_{3,j}}\right) \cdot  x_{n_1+1}^{\alpha_{n_1+1}}
 \right) =\\
 E_{i_{3,1},n_1+1} \cdots  E_{i_{3,k_{13}},n_1+1}  E_{n_1+1,i_{1,1}}  \cdots E_{n_1+1,i_{1,k_{13}+\alpha_{n_1}+1}}
 \left( X_{J_1}^{m_1} Y_{J_3}^{m_2} \right) ,
\end{align*}
where $i_{1,r} \in J_1, i_{3,s} \in J_3$.
This shows that
\begin{align}
 T\left(X_{J_1}^{m_1} Y_{J_3}^{m_2} \cdot
\left( \prod_{j=1}^{k_{13}+\alpha_{n_1+1}}  x_{i_{1,j}}\right)
\left( \prod_{j=1}^{k_{13}}  x_{i_{3,j}}\right) \cdot  x_{n_1+1}^{\alpha_{n_1+1}}
 \right)
\subset M_{2 k_{13}+\alpha_{n_1+1}},
\end{align}
which implies
\begin{align}
T\left(N\begin{pmatrix}
m_1+k_{13}+ \alpha_{n_1+1}& 0 & k_{13} \\
0 & 0 & m_2
\end{pmatrix}
\cdot x_{n_1+1}^{\alpha_{n_1+1}}\right)
\subset M_{2 k_{13}+\alpha_{n_1+1}}.
\end{align}

Next we want to prove
\begin{align}\label{a2.69}
T\left(
 N\begin{pmatrix}
m_1+k_{13}+\alpha_{n_1+1} & 0 & k_{13} \\
k_{21}  & 0 & m_2+k_{21}
\end{pmatrix}
\cdot x_{n_1+1}^{\alpha_{n_1+1}}
\right)
\subset
 M_{2 \left( k_{13}+k_{21} \right) +\alpha_{n_1+1}}.
\end{align}
by induction on $k_{21}$.
It holds for $k_{21}=0$. Suppose that it holds for $k_{21} \leq r$.
Let
\begin{equation}h \in N\begin{pmatrix}
m_1+k_{13}+\alpha_{n_1+1} & 0 & k_{13} \\
r  & 0 & m_2+r
\end{pmatrix}
\cdot x_{n_1+1}^{\alpha_{n_1+1}}.
\end{equation}
Note
\begin{align}
T\left( h\cdot y_{i_1} y_{i_3} \right) =
-T\left( h \cdot \left( x_{i_1} x_{i_3}- y_{i_1} y_{i_3} \right)  \right) +
T\left(h\cdot x_{i_1} x_{i_3} \right).
\end{align}
Moreover,
 \begin{eqnarray}& &T\left(h\cdot x_{i_1} x_{i_3} \right) \in \left(
 N\begin{pmatrix}
m_1+k_{13}+\alpha_{n_1+1}+1 & 0 & k_{13}+1
 \\ r & 0 & m_2+ r
\end{pmatrix}
\cdot x_{n_1+1}^{\alpha_{n_1+1}}
\right)\nonumber\\ & &
\subset
  M_{2 \left(k_{13}+1\right)+2r+\alpha_{n_1+1}}\end{eqnarray}by the inductional assumption, and
\begin{eqnarray}& &-T\left( h \cdot \left( x_{i_1} x_{i_3}- y_{i_1} y_{i_3} \right)  \right)\nonumber\\&=&
-( x_{i_1} x_{i_3}- y_{i_1} y_{i_3}) T(h)=E_{i_3,i_1}(T(h))\nonumber\\ &\subset&
E_{i_3,i_1}\left[T\left(N\begin{pmatrix}
m_1+k_{13}+\alpha_{n_1+1} & 0 & k_{13} \\
r  & 0 & m_2+r
\end{pmatrix}
\cdot x_{n_1+1}^{\alpha_{n_1+1}}\right)\right]\nonumber\\
&\subset& E_{i_3,i_1}(M_{2 k_{13}+2r+\alpha_{n_1+1}})\subset M_{2 k_{13}+2r+1+\alpha_{n_1+1}}\subset M_{2 k_{13}+2r+2+\alpha_{n_1+1}}\end{eqnarray}
by (\ref{a2.55}) and (\ref{a2.56}). So the above three expressions imply
\begin{equation}
T\left( h\cdot y_{i_1} y_{i_3} \right) \in M_{2 k_{13}+2r+2+\alpha_{n_1+1}}.\end{equation}
This proves (\ref{a2.69}). Symmetrically, we have
\begin{align}
T\left(
N\begin{pmatrix}
m_1+k_{13}& 0 & k_{13} \\
k_{21} & 0 &m_2+k_{21+\beta_{n_1+1}}
\end{pmatrix}
\cdot y_{n_1+1}^{\beta_{n_1+1}}
\right)
\in
M_{2 \left( k_{13}+k_{21}\right) +\beta_{n_1+1}}.
\end{align}

Let $k_{12},k_{22}\in \mbb N$. For convenience, we redenote
\begin{equation}k_1=m_1+k_{12}+ k_{13}+\al_{n_1+1},\;\;k_2=m_2+k_{21}+k_{22}\end{equation}
and
\begin{equation}
N'\begin{pmatrix}
k_1  &k_{12} & k_{13} \\
k_{21} & k_{22} &k_2
\end{pmatrix}=\left\{x^\al y^\be\in N\begin{pmatrix}
k_1  &k_{12} & k_{13} \\
k_{21} & k_{22} &k_2
\end{pmatrix}\mid\al_{n_1+1}=\be_{n_1+1}=0\right\}\end{equation}
(cf. (\ref{a2.9})). We want to prove
\begin{equation}\label{a2.78}
T\left(N'\begin{pmatrix}
k_1  &k_{12} & k_{13} \\
k_{21} & k_{22} &k_2
\end{pmatrix}
\cdot x_{n_1+1}^{\alpha_{n_1+1}}
\right)\subset
 M_{2 k_{21}+2 k_{13}+k_{12}+k_{22}+\alpha_{n_1+1}}
\end{equation}
by induction on $k_{12}+k_{22}$. When $k_{12}+k_{22}=0$, it is exactly (\ref{a2.69}). Assume that it holds for
 $k_{12}+k_{22} < k_0 $.  Let $i_1\in J_1$ and $i_2\in J_2\setminus\{n_1+1\}$. According to (\ref{a2.1})-(\ref{a2.3}) and (\ref{a2.7}),
 \begin{equation}\td{\pi}(E_{i_2,i_1})=-x_{i_1}x_{i_2}-y_{i_1}\ptl_{y_{i_2}},\;\;\td{\pi}(E_{i_2,i_1})T=T\td{\pi}(E_{i_2,i_1}).\end{equation}
 Let
 \begin{equation}h\in N'\begin{pmatrix}
k_1  &k_{12} & k_{13} \\
k_{21} & k_{22} &k_2
\end{pmatrix}
\cdot x_{n_1+1}^{\alpha_{n_1+1}}\;\;\mbox{with}\;\;k_{12}+k_{22}=k_0-1.\end{equation}
 We have
 \begin{equation}E_{i_2,i_1}(T(h))=T(E_{i_2,i_1}(h))=-T(x_{i_1}x_{i_2}h)-T(y_{i_1}\ptl_{y_{i_2}}(h)).\end{equation}
Note
 \begin{equation}x_{i_1}x_{i_2}h\in N'\begin{pmatrix}
k_1+1  &k_{12}+1 & k_{13} \\k_{21} & k_{22} &k_2\end{pmatrix}\cdot x_{n_1+1}^{\alpha_{n_1+1}}\end{equation}
and
\begin{eqnarray}& &T(y_{i_1}\ptl_{y_{i_2}}(h))\in
T\left(N'\begin{pmatrix}
k_1  &k_{12} & k_{13} \\
k_{21}+1 & k_{22}-1 &k_2
\end{pmatrix}
\cdot x_{n_1+1}^{\alpha_{n_1+1}}\right)\nonumber\\ & &\subset M_{2 k_{21}+2 k_{13}+k_{12}+k_{22}+1+\alpha_{n_1+1}}.\end{eqnarray}
 On the other hand,
 \begin{equation}E_{i_2,i_1}(T(h))\in E_{i_2,i_1}(M_{2 k_{21}+2 k_{13}+k_{12}+k_{22}+\alpha_{n_1+1}})\subset M_{2 k_{21}+2 k_{13}+k_{12}+k_{22}+1+\alpha_{n_1+1}}.\end{equation}
 Thus
 \begin{equation}T(x_{i_1}x_{i_2}h)=-E_{i_2,i_1}(T(h))-T(y_{i_1}\ptl_{y_{i_2}}(h))\in  M_{2 k_{21}+2 k_{13}+k_{12}+k_{22}+1+\alpha_{n_1+1}}.\end{equation}
 This proves
 \begin{equation}T\left(N'\begin{pmatrix}k_1+1  &k_{12}+1 & k_{13} \\k_{21} & k_{22} &k_2\end{pmatrix}\cdot x_{n_1+1}^{\alpha_{n_1+1}}\right)
 \subset M_{2 k_{21}+2 k_{13}+k_{12}+k_{22}+1+\alpha_{n_1+1}}.
 \end{equation}
 By applying $E_{i_3,i_2}$ with $i_2\in J_2\setminus \{n_1+1\}$ and $i_3\in J_3$, we can similarly obtain
 \begin{equation}T\left(N'\begin{pmatrix}k_1  &k_{12} & k_{13} \\k_{21} & k_{22}+1 &k_2+1\end{pmatrix}\cdot x_{n_1+1}^{\alpha_{n_1+1}}\right)
 \subset M_{2 k_{21}+2 k_{13}+k_{12}+k_{22}+1+\alpha_{n_1+1}}.
 \end{equation}
 Hence (\ref{a2.78}) holds for $k_{12}+k_{22}=k_0$. By induction, (\ref{a2.78}) holds for any $k_{12},k_{22}\in\mbb N$.
 Using the symmetry of $x_i's$ and $y_j's$, we have
 \begin{equation}\label{a2.88}
T\left(N'\begin{pmatrix}
k_1  &k_{12} & k_{13} \\
k_{21} & k_{22} &k_2
\end{pmatrix}
\cdot y_{n_1+1}^{\be_{n_1+1}}
\right)\subset
 M_{2 k_{21}+2 k_{13}+k_{12}+k_{22}+\be_{n_1+1}},
 \end{equation}
 where
 \begin{equation}k_1=m_1+k_{12}+ k_{13},\;\;k_2=m_2+k_{21}+k_{22}+\be_{n_1+1}.\end{equation}
Expressions (\ref{a2.53}), (\ref{a2.78}) and  (\ref{a2.88}) show
\begin{equation}\label{a2.90}
TN(\ell)\subset M_\ell\qquad\for\;\;\ell\in\mbb N. \end{equation}

For $0<r\in \mbb N$, any element $w\in TN(k-r)\cdot P^r$ can be written as
\begin{equation}\label{a2.91}
w=E_{i_1,j_1}\cdots E_{i_r,j_r}(h)\in E_{i_1,j_1}\cdots E_{i_r,j_r}(M_{k-r})\subset M_k
 \end{equation}
by (\ref{a2.55})-(\ref{a2.57}), where $h\in TN(k-r)\subset M_{k-r}$, $i_1,\dots,i_r\in J_1$ and $j_1,\dots,j_r\in J_2$.
Now Expressions (\ref{a2.54}), (\ref{a2.90}) and (\ref{a2.91}) yield $V_k\subset M_k$. Recall we have already proved $M_k\subset V_k$ before Case 1. Thus
$M_k=V_k$. By induction, the first conclusion in the proposition holds in this case.
\psp

{\it Case 2. $\ell_1\leq 0\leq \ell_2$}.\psp

In this case,
\begin{equation}M_0=TN\begin{pmatrix}
-\ell_1  & 0 & 0 \\
0 & \ell_2 &0
\end{pmatrix}.\end{equation}
For any $r\in\mbb N$ and
\begin{equation}\label{a2.93}
g\in N\begin{pmatrix}
-\ell_1  & 0 & 0 \\
0 & \ell_2+r &r
\end{pmatrix},\end{equation}
\begin{equation}(\td\Dlt+\ptl_{x_{n_1+1}}\ptl_{y_{n_1+1}})(g)=(\sum_{i=1}^{n_1}x_i\ptl_{y_i}-\sum_{r=n_1+2}^{n_2}\ptl_{x_r}\ptl_{y_r}+\sum_{s=n_2+1}^n
y_s\ptl_{x_s})(g)=0\end{equation}
by (\ref{a1.7}) and (\ref{a2.9}). Thus (\ref{a2.6}) and (\ref{a2.10}) give
\begin{equation}\label{a2.95}
TN\begin{pmatrix}
-\ell_1  & 0 & 0 \\
0 & \ell_2+r &r
\end{pmatrix}=\mbox{Span}\left\{N\begin{pmatrix}
-\ell_1  & 0 & 0 \\
0 & \ell_2+r &r
\end{pmatrix}\right\}.\end{equation}
According to (\ref{a2.1})-(\ref{a2.3}), we have
\begin{equation}\td{\pi}(E_{i_3,i_2})= x_{i_3} \partial_{x_{i_2}}+y_{i_3}y_{i_2}\qquad\for\;\;i_2\in J_2,\;i_3\in J_3.\end{equation}
For $g$ in (\ref{a2.93}), we get
\begin{equation}\label{a2.97}
E_{i_3,i_2}(g)=y_{i_3}y_{i_2}g\in N\begin{pmatrix}
-\ell_1  & 0 & 0 \\
0 & \ell_2+r+1 &r+1
\end{pmatrix}.\end{equation}
By (\ref{a2.95}), (\ref{a2.97}) and induction, we can prove
\begin{align}
TN\begin{pmatrix}
-\ell_1  & 0 & 0 \\
0 & k_{23}+\ell_2 &k_{23}
\end{pmatrix}
\subset M_{k_{23}}\qquad\for\;\;k_{23}\in\mbb N.
\end{align}

 Let $k_{21},k_{22},k_{23},\be_{n_1+1}\in\mbb N$ such that $k_{22}\geq \be_{n_1+1}$ and
 \begin{equation} \label{a2.99}
k_{21}+k_{22}-k_{23}=\ell_2.\end{equation}
Take
\begin{equation}v_1\in X_{J_1}^{-\ell_1}Y_{J_2\setminus\{n_1+1\}}^{k_{22}-\be_{n_1+1}}Y_{J_3}^{k_{23}}.\end{equation}
 Then
 \begin{equation}v_1y_{n_1+1}^{k_{21}+\be_{n_1+1}}\in
TN\begin{pmatrix}
-\ell_1  & 0 & 0 \\
0 & k_{23}+\ell_2 &k_{23}
\end{pmatrix}
\subset M_{k_{23}}\end{equation}
and
\begin{equation}v_1
\left( \prod_{j=1}^{k_{21}}  y_{i_{1,j}}\right) \cdot  y_{n_1+1}^{\beta_{n_1+1}}\in N\begin{pmatrix}
-\ell_1  & 0 & 0 \\
k_{21} & k_{22} &k_{23}
\end{pmatrix}.\end{equation}
By (\ref{a2.35}) with $k=k_{21}+\be_{n_1+1}$, we have
\begin{eqnarray}& &
 \frac{(k_{21}+\be_{n_1+1})! }{ \beta_{n_1+1}  !} \  T\left(v_1
\left( \prod_{j=1}^{k_{21}}  y_{i_{1,j}}\right) \cdot  y_{n_1+1}^{\beta_{n_1+1}}
 \right)\nonumber\\&=&
 E_{n_1+1,i_{1,1}}  \cdots E_{n_1+1,i_{1,k_{21}}}
 \left( v_1 y_{n_1+1}^{k_{21}+\be_{n_1+1}} \right)\subset M_{k_{21}+k_{23}}.
\end{eqnarray}
This shows
\begin{align}
TN\begin{pmatrix}-\ell_1  & 0 & 0 \\ k_{21} & k_{22} &k_{23}
\end{pmatrix}\subset M_{k_{21}+k_{23}}.
\end{align}

Since \begin{eqnarray}\label{a2.105}
T\left(h\cdot x_{i_1} x_{i_3} \right)&=&
T\left( h\cdot y_{i_1} y_{i_3} \right)
+T\left( h \cdot \left( x_{i_1} x_{i_3}- y_{i_1} y_{i_3} \right)  \right)\nonumber\\
&=&T\left( h\cdot y_{i_1} y_{i_3} \right)-E_{i_3,i_1}[
T\left( h   \right)]
\end{eqnarray}for $i_1\in J_1$ and $i_3\in J_3$, we have
\begin{eqnarray}\label{a2.106}&&
TN\begin{pmatrix}
-\ell_1+1  & 0 & 1 \\
k_{21} & k_{22} &k_{23}
\end{pmatrix}\nonumber \\&
&\subset {\mfk g}\left[TN\begin{pmatrix}
-\ell_1   & 0 & 0 \\
k_{21} & k_{22} &k_{23}
\end{pmatrix}\right]+  TN\begin{pmatrix}
-\ell_1   & 0 & 0 \\
k_{21}+1 & k_{22} &k_{23}+1
\end{pmatrix}\nonumber\\
&&\subset
M_{k_{21}+k_{23}+1}+M_{k_{21}+k_{23}+2}\subset M_{k_{21}+k_{23}+2}
.
\end{eqnarray}
Repeatedly applying (\ref{a2.105}), we can prove
\begin{align}\label{a2.107}
TN\begin{pmatrix}
k_{11}  & 0 & k_{13} \\
k_{21} & k_{22} &k_{23}
\end{pmatrix}
\subset
M_{k_{21}+k_{23}+2k_{13}}.
\end{align}
 by induction.

For $i_1\in J_1$ and $i_2\in J_2\setminus\{n_1+1\}$, (\ref{a2.1})-(\ref{a2.3}) and (\ref{a2.7}) show
\begin{align}\label{a2.108}
T\left( x_{i_1} x_{i_2}  h \right)=-E_{i_2,i_1}(T\left( h\right)) -T\left( y_{i_1} \partial_{y_{i_2}} h \right) ,
\end{align}
Denote
\begin{equation}N^\ast\begin{pmatrix}
k_{11}  & k_{12} & k_{13} \\
k_{21} & k_{22} &k_{23}
\end{pmatrix}=\left\{x^\al y^\be\in N\begin{pmatrix}
k_{11}  & k_{12} & k_{13} \\
k_{21} & k_{22} &k_{23}
\end{pmatrix}\mid\al_{n_1+1}=0\right\}.\end{equation}
Applying (\ref{a2.108}) to (\ref{a2.107}), we get
\begin{eqnarray}\label{b2.111}
& &
T\left[N^\ast\begin{pmatrix}
k_{11}  & 1 & k_{13} \\
k_{21} & k_{22} &k_{23}
\end{pmatrix}\right]\nonumber\\
&& \subset\mfk g\left[TN\begin{pmatrix}
k_{11}-1   & 0 & k_{13} \\
k_{21} & k_{22} &k_{23}
\end{pmatrix}\right]+  TN\begin{pmatrix}
k_{11}-1   & 0 & k_{13} \\
k_{21}+1 & k_{22}-1 &k_{23}
\end{pmatrix}\nonumber\\
&&\subset
M_{k_{21}+k_{23}+2k_{13}+1}+M_{k_{21}+1+k_{23}+2k_{13}}\subset M_{k_{21}+k_{23}+2k_{13}+1}.
\end{eqnarray}
By (\ref{a2.107}), (\ref{b2.111}) and induction on $k_{12}$, we can deduce
\begin{align}\label{a2.111}
T\left[N^\ast \begin{pmatrix}
k_{11}  & k_{12} & k_{13} \\
k_{21} & k_{22} &k_{23}
\end{pmatrix}\right]
\subset M_{k_{21}+k_{23}+2k_{13}+k_{12}}=M_{2k_{21}+k_{22}+2k_{13}+k_{12}-\ell_2}.
\end{align}

Denote
\begin{equation}N^\sharp\begin{pmatrix}
k_{11}  & k_{12} & k_{13} \\
k_{21} & k_{22} &k_{23}
\end{pmatrix}=\left\{x^\al y^\be\in N\begin{pmatrix}
k_{11}  & k_{12} & k_{13} \\
k_{21} & k_{22} &k_{23}
\end{pmatrix}\mid\be_{n_1+1}=0\right\}.\end{equation}
By the symmetry of $x_i's$ and $y_j's$, we have
\begin{align}\label{a2.113}
T\left[N^\sharp \begin{pmatrix}
k_{11}  & k_{12} & k_{13} \\
k_{21} & k_{22} &k_{23}
\end{pmatrix}\right]
\subset M_{k_{21}+k_{23}+2k_{13}+k_{12}}=M_{2k_{21}+k_{22}+2k_{13}+k_{12}-\ell_2}.
\end{align}
Expressions (\ref{a2.111}) and (\ref{a2.113}) yield $TN_k \subset M_k$. According to (\ref{a2.91}), we have $V_k\subset M_k$.
Recall we have already proved $M_k\subset V_k$ before Case 1. Thus $M_k=V_k$. By induction, the first conclusion in the proposition holds in this case.\pse

By the symmetry of $x_i's$ and $y_j's$, the first conclusion in the proposition holds in any case. Thanks  to (\ref{a2.54}) and (\ref{a2.60}),
 \begin{equation}M=\msr H_{\la\ell_1,\ell_2\ra}\subset\bigcup_{k=0}^\infty TN_k\subset \bigcup_{k=0}^\infty V_k=\bigcup_{k=0}^\infty M_k=M.\end{equation}
So (\ref{a1.9}) holds.
\end{proof}

According to (\ref{a2.54}) and (\ref{a2.60}), $M_k=V_k$ is spanned by the set
 \begin{eqnarray}\msr S_k&=&\{T(x^\al y^\be),T(x^\al y^\be)P^{k-\mfk d(x^\al y^\be)}\mid x^\al y^\be\in \msr A_{\la\ell_1,\ell_2\ra},\nonumber\\& &\;\al_{n_1+1}\be_{n_1+1}=0,\;\mfk d( x^\al y^\be)\leq k\}.\end{eqnarray}

\section{Products of  Alternating Polynomials}

In this section, we want to study the linear dependence among the elements in $\msr S_k$.

Denote
\begin{equation} Z = \{z_{j,i} \mid i \in J_1, j \in J_3 \} \end{equation}
 as a set of $n_1(n-n_2)$ variables. The notion is compatible with (\ref{a2.55}).
Set
\begin{equation}\label{a3.2}\msr B=\mbb F[Z]\end{equation}
the polynomial algebra in $Z$. Denote by $\msr B_m$ the subspace of its homogeneous polynomials with degree $m$ and by $Z^{(m)}$ the set of monomials with degree $m$.

Define associative algebra homomorphisms $\vf_x,\vf_y: \msr B\rta\msr A$ by
\begin{equation}\label{a3.3}
\vf_x(z_{j,i})=x_ix_j,\;\;\vf_y(z_{j,i})=y_iy_j\qquad\for\;\;i\in J_1,\;j\in J_3.\end{equation}
Then $\ker \varphi_x $ and $\ker \varphi_y $  are graded ideals of $\msr B$. For subset $S$ of a polynomial algebra, we use $\la S\ra$ to denote the ideal generated by $S$. Let
\begin{equation} \label{a3.4}\msr R_2=\left\la \left|\begin{array}{cc}
z_{j_1,i_1}  & z_{j_1,i_2}\\
z_{j_2,i_i
} & z_{j_2,i_2}\end{array}\right|\;\mid\;i_1,i_2\in J_1,\;j_1,j_2\in J_3\right\ra.\end{equation}

\begin{lemma}
\label{alem:2}
We have
\begin{align} \label{a3.5}
\ker \varphi_x = \ker \varphi_y = \msr R_2.
\end{align}

\end{lemma}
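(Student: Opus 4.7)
The inclusions $\msr R_2 \subseteq \ker\vf_x$ and $\msr R_2 \subseteq \ker\vf_y$ are immediate: a generator of $\msr R_2$ has $\vf_x$-image $x_{i_1}x_{j_1}x_{i_2}x_{j_2} - x_{i_2}x_{j_1}x_{i_1}x_{j_2}$, which vanishes by commutativity of $\msr A$, and similarly for $\vf_y$. The whole content of the lemma is the reverse inclusion, which is an instance of the classical fact that the quadratic relations cut out the Segre image. The strategy is to produce an explicit monomial basis of $\msr B/\msr R_2$ and to check that $\vf_x$ (resp. $\vf_y$) is injective on it.

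Call a monomial $z_{j_1,i_1}z_{j_2,i_2}\cdots z_{j_m,i_m}\in\msr B$ \emph{standard} if $j_1\le j_2\le\cdots\le j_m$ and $i_1\le i_2\le\cdots\le i_m$ simultaneously. First I would argue that every monomial in $\msr B$ is congruent modulo $\msr R_2$ to a standard monomial. Since $\msr B$ is commutative, one may first reorder the factors so that the $j$-indices are non-decreasing. Next, using the defining relation in the form $z_{j_a,i_a}z_{j_b,i_b}\equiv z_{j_a,i_b}z_{j_b,i_a}\pmod{\msr R_2}$ at any two positions $a<b$, a transposition of the $i$-indices is permissible and leaves the $j$-sequence unchanged. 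Bubble-sorting the $i$-indices through such exchanges terminates in a monomial whose $i$-sequence is non-decreasing as well, hence standard. Thus the images of standard monomials span $\msr B/\msr R_2$.

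Finally I would verify that distinct standard monomials are mapped by $\vf_x$ to distinct monomials of $\msr A$. Indeed, the standard monomial $\prod_{k=1}^m z_{j_k,i_k}$ is sent to
\begin{equation*}
\Bigl(\prod_{k=1}^m x_{i_k}\Bigr)\Bigl(\prod_{k=1}^m x_{j_k}\Bigr)\in\mbb F[X_{J_1},X_{J_3}],
\end{equation*}
and because $J_1\cap J_3=\emptyset$ the $J_1$- and $J_3$-parts of this monomial recover the two sorted sequences $(i_k)$ and $(j_k)$ uniquely. Distinct monomials of $\msr A$ are linearly independent, so the standard monomials are linearly independent in $\msr B/\msr R_2$ and form an $\mbb F$-basis. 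The induced map $\msr B/\msr R_2\to\msr A$ therefore sends a basis to a linearly independent set, hence is injective, which gives $\ker\vf_x=\msr R_2$. The argument for $\vf_y$ is word-for-word identical after replacing each $x$ by the corresponding $y$.

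The only combinatorial subtlety is the bubble-sort step, where one must confirm both that the $\msr R_2$-relation permits swapping $i$-indices at non-adjacent positions (it does, because the generator is quadratic in any two factors one selects from the monomial) and that the process actually terminates in finitely many steps (it does, by monotone decrease of the number of inversions in the $i$-sequence with the $j$-sequence held fixed). Everything else in the argument is formal.
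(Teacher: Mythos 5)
Your proof is correct but takes a genuinely different route from the paper's. The paper works directly with the kernel: it sets up the linear $S_r\times S_r$-action that permutes row and column indices of degree-$r$ monomials, observes that $\varphi_x$ is invariant under this action, groups the terms of a kernel element $f$ by their index multisets $\msr I_1,\msr I_3$, deduces that the coefficients within each group sum to zero, and then rewrites $f=\sum_{k\ge 2}a_k(\sigma_k(f_1)-f_1)$ so that membership in $\msr R_2$ follows from checking $\sigma(g)-g\in\msr R_2$ for transpositions and telescoping for general $\sigma$. You instead exhibit an explicit monomial basis of $\msr B/\msr R_2$ (the standard monomials with both the $i$- and $j$-sequences sorted), proving spanning by a straightening/bubble-sort argument and linear independence by checking that $\varphi_x$ sends distinct standard monomials to distinct monomials of $\msr A$, using the disjointness $J_1\cap J_3=\emptyset$ to recover the two sorted sequences from the image. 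Both arguments rest on the same combinatorial engine, namely that swapping two $i$-indices across two factors while holding the $j$-indices fixed is exactly a single generator of $\msr R_2$, but you package it as the classical straightening-law proof for the Segre embedding, which has the added payoff of giving an explicit basis of the quotient ring. One small point worth stating explicitly: when $j_a=j_b$ the swap is vacuous by commutativity of $\msr B$, so no relation is consumed; your inversion-count termination argument silently covers this case.
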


\begin{proof} First, we want to prove $\ker \varphi_x = \msr R_2$.
Let $S_r$ be the permutation group on $\{1,2,...,r\}$. We define a linear action of the direct product group $S_r\times S_r$ on  $\msr B_r$ by
\begin{equation}
(\sgm_1,\sgm_2)\left( \prod_{s=1}^r z_{j_s,i_s} \right)
=\prod_{s=1}^r z_{j_{\sigma_1\left( s\right) },i_{\sigma_2\left( s\right) }},
\end{equation}
where $(j_1,i_1)\geq (j_2,i_2) \geq \cdots \geq (j_r,i_r) $ in lexical order. Moreover,
\begin{align}
\varphi_x \left(g\right)= \varphi_x[\sigma \left(g\right)]
\end{align}
for each $\sigma \in S_r \times S_r  $ and $g \in \msr B_r$.

Define
\begin{equation}
\msr I_1\left( \prod_{s=1}^k z_{j_s,i_s} \right)= \{i_1,i_2,...,i_k  \}\subset J_1,\;\;\msr I_3\left( \prod_{s=1}^k z_{j_s,i_s} \right)= \{j_1,j_2,...,j_k \}\subset J_3.\end{equation}
For example, $\msr I_1\left(z_{n,1}z_{n-1,2}z_{n,2}\right)=\{1,2,2\}.$  Let $f\in \ker \varphi_x \cap \msr B_r$. Since the image under $\vf_x$ of the monomials in $\msr B$ with
different $\msr I_1$ and $\msr I_3$ are linearly independent, we may assume  that $f=\sum_{k=1}^m a_k f_k$ is a linear combination of the monomials with
\begin{equation}\msr I_1(f_1)=\msr I_1(f_2)=\cdots =\msr I_1(f_m),\quad \msr I_3(f_1)=\msr I_3(f_2)=\cdots =\msr I_3(f_m).\end{equation}
Moreover, $\sum_{k=1}^m a_k=0$ by $\vf_x(f)$=0. There exists $\sigma_k \in S_r \times S_r$ such that $\sigma_k \left(f_k\right) =f_1$.
Thus
\begin{equation}
f =\sum_{k=1}^m a_k f_k=\sum_{k=2}^m a_k \sigma_k(f_1) +a_1 f_1
=\sum_{k=2}^m a_k \left( \sigma_k(f_1) -f_1 \right) .
\end{equation}

Next, we want to prove  $\sigma \left( f \right)  -f\in \msr R_2$ for any $\sigma \in S_r \times S_r$ and
$f \in Z^{\left(r \right) } $. Firstly, $\msr R_2$ is invariant under $S_r \times S_r$ because
\begin{eqnarray}& &
\sigma \left( \left( z_{i,j}z_{i^\prime,j^\prime}-z_{i^\prime,j}z_{i,j^\prime}\right)  g \right)
\nonumber \\& =&\left( z_{\sigma_1\left( i\right) ,\sigma_2\left(j\right)}z_{\sigma_1\left( i^\prime\right),\sigma_2\left(j^\prime\right)}-z_{\sigma_1\left( i^\prime\right),\sigma_2\left(j\right)}z_{\sigma_1\left( i\right),\sigma_2\left(j^\prime\right)}\right) \sigma\left( g\right) \in \msr R_2
\end{eqnarray}
for each $g\in Z^{(r-2)}$. We set
\begin{align}
\sigma=\left( \sigma_1^1 \cdots \sigma_1^p,\sigma_2^1 \cdots \sigma_2^q \right) ,
\end{align}
where $\sigma_i^j \in S_r$ are transpositions. Note that
\begin{eqnarray}\label{a3.13}& &
\left( \left( \iota_1\: \iota_2\right),1 \right) .\left(  \prod_{s=1}^r z_{j_s,i_s}  \right) -\prod_{s=1}^r z_{j_s,i_s}\nonumber
\\ &=& \left(\prod_{s \neq \iota_1,\iota_2} z_{i_s,j_s}\right)  \cdot
\left( z_{j_{\iota_2},i_{\iota_1}} z_{j_{\iota_1},i_{\iota_2}}
-z_{j_{\iota_1},i_{\iota_1}} z_{j_{\iota_2},i_{\iota_2}}\right)  \in \msr R_2.
\end{eqnarray}
Symmetrically,
\begin{equation}\left(1, (\iota_1\: \iota_2)\right) \left( \prod_{s=1}^r z_{j_s,i_s}  \right) -\prod_{s=1}^r z_{j_s,i_s}\in\msr R_2.\end{equation}
Let $h\in Z^{(r)}$. We have
\begin{eqnarray}\sgm(g)-g&=&( \sigma_1^1 \cdots \sigma_1^p,\sigma_2^1 \cdots \sigma_2^q)(g)-g\nonumber\\&=&
\sum_{s=1}^p((\sgm_1^s,1)[(\sgm_1^{s+1}\cdots\sgm_1^p,\sigma_2^1 \cdots \sigma_2^q)(g)]-(\sgm_1^{s+1}\cdots\sgm_1^p,\sigma_2^1 \cdots \sigma_2^q)(g))\nonumber\\&&+\sum_{t=1}^q((1,\sgm_2^t)[(1,\sgm_2^{t+1}\cdots\sgm_2^q)(g)]-(1,\sgm_2^{t+1}\cdots\sgm_2^q)(g))\in\msr R_2.
\end{eqnarray}
Hence
\begin{equation}
f=\sum_{k=2}^m a_k \left( \sigma_k(f_1) -f_1 \right)\in\msr R_2
\end{equation}
by (\ref{a3.13}). So $\ker \varphi_x\cap\msr B_r\subset \msr R_2$ for each $r\geq 2$. Symmetrically, $\ker \varphi_y\cap\msr B_r\subset \msr R_2$.
Furthermore, (\ref{a3.3}) and (\ref{a3.5}) imply $\vf_x(\msr R_2)=\vf_y(\msr R_2)=\{0\}$. Thus $ \ker \varphi_x = \ker \varphi_y = \msr R_2$.
\end{proof}

Set
 \begin{equation}z_{n+1,0}=0,\;z_{n+1,i}=x_i,\;z_{j,0}=y_j\qquad\for\;\;i\in J_1,\;j \in J_3.
\label{a3.17} \end{equation}
Denote
\begin{equation}\label{a3.18}
J'_1=\{0\}\bigcup J_1,\quad J'_3=J_3\bigcup \{n+1\}.\end{equation}
Let
\begin{equation}\msr C=\mbb F[z_{j,i}\mid j\in J_3',\;i\in J_1',\;(j,i)\neq (n+1,0)]\end{equation}
be the algebra of polynomials in the variables $\{z_{j,i}\mid j\in J_3',\;i\in J_1',\;(j,i)\neq (n+1,0)\}.$
Denote by $\msr C_k$ the subspace of homogenous polynomials in $\msr C$ with degree $k$.
Define an associative algebra homomorphism $\phi: \msr C\rta\msr A$ by
\begin{equation}\phi(z_{j,i})=x_ix_j-y_iy_j,\;\;\phi(x_i)=x_i,\;\phi(y_j)=y_j\qquad i\in J_1,\;j\in J_3.\end{equation}

For $0<m\in\mbb N$, we set
\begin{equation}\wht J_1^m=\{\{i_1,i_2,...,i_m\}\mid i_s\in J_1\},\quad\wht J_3^m=\{\{j_1,j_2,...,j_m\}\mid i_t\in J_1\}.\end{equation}
Let $\{(j_1,i_1),(j_2,i_2),...,(j_r,i_r)\}\subset \mbb N^2$. If there exist three distinct $1\leq k_1,k_2,k_3\leq r$ such that
$i_{k_1}<i_{k_2}< i_{k_3}$ and $j_{k_1}<j_{k_2}<j_{k_3}$, we say that $\{(j_1,i_1),(j_2,i_2),...,(j_r,i_r)\}$ contains a 3-chain $\left( j_{k_1},i_{k_1}\right) \prec   \left( j_{k_2},i_{k_2}\right)\prec  \left( j_{k_3},i_{k_3}\right) $. For $k_1,k_2,k_3\in \mbb N$,
we define
\begin{equation}\msr I_1^x\left[\left(\prod_{t_1=1}^{k_1} x_{s_{1,t_1}}\right)  \left(\prod_{t_2=1}^{k_2} y_{s_{2,t_2}}\right)  \left( \prod_{t_3=1}^{k_3} z_{j_{t_3},i_{t_3}}\right)\right]=\{s_{1,1},...,s_{1,k_1}\},\label{a3.22}\end{equation}
\begin{equation}\msr I_1\left[\left(\prod_{t_1=1}^{k_1} x_{s_{1,t_1}}\right)  \left(\prod_{t_2=1}^{k_2} y_{s_{2,t_2}}\right)  \left( \prod_{t_3=1}^{k_3} z_{j_{t_3},i_{t_3}}\right)\right]=\{s_{1,1},...,s_{1,k_1},i_1,...,i_{k_3}\},\end{equation}
\begin{equation}\msr I_3^y\left[\left(\prod_{t_1=1}^{k_1} x_{s_{1,t_1}}\right)  \left(\prod_{t_2=1}^{k_2} y_{s_{2,t_2}}\right)  \left( \prod_{t_3=1}^{k_3} z_{j_{t_3},i_{t_3}}\right)\right]=\{s_{2,1},...,s_{2,k_2}\},\end{equation}
\begin{equation}\msr I_3\left[\left(\prod_{t_1=1}^{k_1} x_{s_{1,t_1}}\right)  \left(\prod_{t_2=1}^{k_2} y_{s_{2,t_2}}\right)  \left( \prod_{t_3=1}^{k_3} z_{j_{t_3},i_{t_3}}\right)\right]=\{s_{2,1},...,s_{2,k_2},j_1,...,j_{k_3}\},\end{equation}
\begin{eqnarray}& &\msr I\left[\left(\prod_{t_1=1}^{k_1} x_{s_{1,t_1}}\right)  \left(\prod_{t_2=1}^{k_2} y_{s_{2,t_2}}\right)  \left( \prod_{t_3=1}^{k_3} z_{j_{t_3},i_{t_3}}\right)\right]\nonumber\\ &=&
\{(n+1,s_{1,1}),...,(n+1,s_{1,k_1}),(j_1,i_1),...,(j_{k_3},i_{k_3}),(s_{2,1},0),...,(s_{2,k_2},0)\},\qquad\end{eqnarray}
where
\begin{equation}\{s_{1,1},...,s_{1,k_1},i_1,...,i_{k_3}\}\in \wht J_1^{k_1+k_3},\;\;\{s_{2,1},...,s_{2,k_2},j_1,...,j_{k_3}\}\in \wht J_3^{k_2+k_3}.
\end{equation}
In the following, we use the lexical order $\leq$ on $\mbb N^2$.
Fix ${\cal I}_1\in  \wht J_1^{k_1+k_3}$ and ${\cal I}_3\in \wht J_3^{k_2+k_3}$. we define
\begin{eqnarray}\label{a3.28}
G_{k_1,k_2,k_3}^{{\cal I}_1,{\cal I}_3}&=& \big\{g=\left(\prod_{t_1=1}^{k_1} x_{s_{1,t_1}}\right)  \left(\prod_{t_2=1}^{k_2} y_{s_{2,t_2}}\right)  \left( \prod_{t_3=1}^{k_3} z_{j_{t_3},i_{t_3}}\right)\mid
\msr I_1(g)={\cal I}_1,\;\nonumber\\& &\msr I_3(g)={\cal I}_3,\;\mbox{no 3-chains in}\;\msr I(g);\;s_{1,1}\leq\cdots\leq s_{1,k_1},\;\nonumber\\ & &s_{2,1}\leq\cdots \leq s_{1,k_2},\;(j_1,i_1)\leq ...\leq (j_{k_3},i_{k_3}) \big\}.\end{eqnarray}
Set
\begin{equation}\label{a3.29}V_{k_1,k_2,k_3}^{{\cal I}_1,{\cal I}_3}=\text{Span}\{G_{k_1,k_2,k_3}^{{\cal I}_1,{\cal I}_3}\}.\end{equation}
\pse

\begin{lemma}
\label{alem:3} The set $\phi(G_{k_1,k_2,k_3}^{{\cal I}_1,{\cal I}_3})$  is a linearly independent set in $\msr A$.
\end{lemma}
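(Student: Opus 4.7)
Expanding
\[
\phi(g)=\sum_{S\subseteq\{1,\dots,k_3\}}(-1)^{|S|}M_S(g),\qquad M_S(g)=\Big(\prod_{t_1}x_{s_{1,t_1}}\Big)\Big(\prod_{t_2}y_{s_{2,t_2}}\Big)\Big(\prod_{t\notin S}x_{i_t}x_{j_t}\Big)\Big(\prod_{t\in S}y_{i_t}y_{j_t}\Big),
\]
turns $\phi(g)$ into a signed sum of $2^{k_3}$ monomials in $\msr A$. Each $M_S(g)$ has $y_{J_1}$-degree exactly $|S|$, which supplies a $(k_3{+}1)$-step filtration of $\phi(g)$ by $y_{J_1}$-degree; the plan is to cascade down this filtration to extract each coefficient $c_g$ from any linear relation $\sum_g c_g\phi(g)=0$.

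First I would check that within a single $\phi(g)$ the sign pattern cannot cancel any $M_S(g)$: two distinct $S\neq S'$ give $M_S(g)=M_{S'}(g)$ only by exchanging indices $t,t'$ with $(j_t,i_t)=(j_{t'},i_{t'})$. By Mirsky's theorem applied to the height-$\leq 2$ poset guaranteed by the no-3-chain hypothesis, the canonical antichain decomposition $P=P_1\sqcup P_2$ places equal pairs at the same level, so the parities of such swaps agree; consequently each distinct $M_S(g)$ appears in $\phi(g)$ with a nonzero coefficient (namely $\pm 1$ times a multinomial factor).

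Now suppose $\sum_g c_g\phi(g)=0$. The top slice $|S|=k_3$ contributes a single monomial $(-1)^{k_3}M_{\mathrm{all}}(g)=(-1)^{k_3}\big(\prod_{t_1}x_{s_{1,t_1}}\big)\big(\prod_{t_2}y_{s_{2,t_2}}\big)\big(\prod_{t}y_{i_t}y_{j_t}\big)$ per $g$, whose multi-degree encodes the pair $(\{s_{1,t_1}\},\{i_{t_3}\})$; setting its coefficient to zero partitions $G:=G_{k_1,k_2,k_3}^{{\cal I}_1,{\cal I}_3}$ into fibers $G^\mu$ and forces $\sum_{g\in G^\mu}c_g=0$ for each $\mu$. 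Descending through slices $|S|=k_3-r$, each produces constraints indexed by unordered $r$-submultisets of pairs; assembling all slices in auxiliary variables $(t_p)_{p\in J_3\times J_1}$ yields the generating-function identity
\[
\sum_{g\in G}c_g\prod_{p}(1+t_p)^{m_p(g)}=0,
\]
where $m_p(g)$ is the multiplicity of the pair $p$ among the $z$-factors of $g$.

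The main obstacle will be that monomials at intermediate slices collide across fibers (e.g.\ when $\{s_{1,t_1}\}+\{i_{t_3}\}$ reallocates inside ${\cal I}_1$), so fiber-wise constraints are not immediate from a single slice. I would resolve this by running a lex-ordered outer induction on $\{s_{1,t_1}\}$, peeling off lex-leading fibers using the top-slice fiber-vanishing to reduce to the fiber-restricted identity $\sum_{g\in G^\mu}c_g\prod_p u_p^{m_p(g)}=0$ with $u_p=1+t_p$. Within a fiber, $\{s_{1,t_1}\}$ and $\{i_{t_3}\}$ are fixed, $\{s_{2,t_2}\}={\cal I}_3-\{j_p\}$ is forced, and Mirsky's canonical pairing within each antichain of $P$ eliminates any residual pairing freedom, so $g\mapsto(m_p(g))_p$ is injective on $G^\mu$. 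The monomials $\prod_p u_p^{m_p(g)}$ are then linearly independent over the fiber, forcing $c_g=0$ and completing the argument.
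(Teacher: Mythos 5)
Your approach (binomial expansion of $\phi(g)$, filtration by $y_{J_1}$-degree, generating function in auxiliary variables $t_p$) is genuinely different from the paper's proof, which proceeds by induction on the quantity $k_1+k_2+k_3+\mfk c({\cal I}_1)+\mfk c({\cal I}_3)$, using specialization endomorphisms $\G^y_{i_1,i_2}$, $\G^x_{j_2,j_1}$ of $\msr C$ and the observation that these specializations preserve the no--3--chain condition. A purely combinatorial extraction of coefficients would be attractive, but as written your argument has a genuine gap at the step where you pass from the relation $\sum_g c_g\,\phi(g)=0$ to the generating-function identity $\sum_{g}c_g\prod_p(1+t_p)^{m_p(g)}=0$.

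The problem is that a monomial $\mu\in\msr A$ does not record which pairs were selected; it only records the \emph{marginal} multisets. Concretely, from the exponents of $\mu=M_S(g)$ one can read off the multisets $\{\,i_t : t\in S\,\}\subset J_1$ and $\{\,j_t : t\in S\,\}\subset J_3$ (together with $\{s_1\}\cup\{\,i_t:t\notin S\,\}$, etc.), but \emph{not} the pair multiset $\{\,(j_t,i_t):t\in S\,\}$. Consequently the constraint attached to $\mu$ equates to zero the quantity
\begin{equation*}
\sum_{g}c_g\cdot \#\bigl\{\,S\subseteq\{1,\dots,k_3\} : \{i_t:t\in S\}=A,\ \{j_t:t\in S\}=B\,\bigr\},
\end{equation*}
which is a \emph{sum} of the generating-function coefficients over all pairings with marginals $(A,B)$, not a single such coefficient. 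Already for $|S|=2$, $A=\{i_1,i_2\}$, $B=\{j_1,j_2\}$ with $i_1\ne i_2$, $j_1\ne j_2$, the monomial constraint lumps together $t_{(j_1,i_1)}t_{(j_2,i_2)}$ and $t_{(j_1,i_2)}t_{(j_2,i_1)}$; your generating-function identity would assert both vanish separately, which is strictly stronger than what the relation gives. Your lex-ordered induction addresses the collision between $\{s_1\}$ and $\{i_t\}$ inside ${\cal I}_1$, but it does not touch this within-fiber pairing ambiguity, and the appeal to ``Mirsky's canonical pairing within each antichain eliminates any residual pairing freedom'' is not concrete enough: injectivity of $g\mapsto(m_p(g))_p$ on a fiber is trivial, whereas the issue is whether the available \emph{linear} constraints determine the values $c_g$ given that they only see marginals. (Note also that the no--3--chain hypothesis is essential to the statement — without it $\ker\phi=\msr R_3\neq 0$ — yet it enters your sketch only through the definition of $G$, not at the point where the pairing ambiguity must be resolved; that is a sign the key mechanism is missing.) To salvage the plan one would have to show directly that the weaker marginal constraints suffice, which seems to require an argument of comparable depth to the paper's.
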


\begin{proof}
For convenience, we make the following convention:
\begin{equation}x_0=0,\quad y_{n+1}=0.\end{equation}
For $i_1,i_2\in J_1'$ with $i_1 \neq i_2$, we define an associative algebra endomorphism $\G^y_{i_1,i_2}$ of $\msr C$ by
\begin{equation}\label{a3.31}
\G^y_{i_1,i_2}(z_{j,i_1})=z_{j,i_2},\;\G^y_{i_1,i_2}(z_{j,i'})=z_{j,i'}\quad\for\;\;j\in J_3',\;i_1\neq i'\in J_1'.\end{equation}
In particular,
\begin{equation}\G^y_{i_1,0}(z_{j,i_1})=y_j,\quad\G^y_{i_1,0}(x_{i_1})=0\end{equation}
by (\ref{a3.17}) and (\ref{a3.31}).

For any ${\cal I}\in (\wht J_1^m\bigcup\wht J_3^m)$, we denote by
$\mfk c({\cal I})$ the number of distinct elements in ${\cal I}$. We want to prove  that $\phi(G_{k_1,k_2,k_3}^{{\cal I}_1,{\cal I}_3})$ is linearly dependent by induction on
\begin{equation}\ell_{k_1,k_2,k_3}^{{\cal I}_1,{\cal I}_3}=k_1+k_2+k_3+\mfk c({\cal I}_1)+\mfk c({\cal I}_3).\end{equation} First $\ell_{k_1,k_2,k_3}^{{\cal I}_1,{\cal I}_3} \geq 2$ by definition. When $\ell_{k_1,k_2,k_3}^{{\cal I}_1,{\cal I}_3}=2$, $G_{k_1,k_2,k_3}^{{\cal I}_1,{\cal I}_3}=\{x_i\}$ for some $i\in J_1$ or $\{y_j\}$ for some $j\in J_3$. The lemma trivially holds. Assume the lemma holds for $\ell_{k_1,k_2,k_3}^{{\cal I}_1,{\cal I}_3}<l\in\mbb N$. Consider $\ell_{k_1,k_2,k_3}^{{\cal I}_1,{\cal I}_3}=l$. Suppose that $\phi(G_{k_1,k_2,k_3}^{{\cal I}_1,{\cal I}_3})$ is linearly dependent. Then there exists $0\neq f\in V_{k_1,k_2,k_3}^{{\cal I}_1,{\cal I}_3}$ such that $\phi(f)=0$. Write
\begin{equation}\label{a3.34}f=\sum_{s=1}^ma_sf_s,\end{equation}
where $0\neq a_1,a_2,...,a_m\in\mbb F$ and $f_1,f_2,...,f_m\in G_{k_1,k_2,k_3}^{{\cal I}_1,{\cal I}_3}$ are distinct. If $\mbox{g.c.d}(f_1,...,f_m)=h\neq 1$, then $f_s=g_sh$ for $s\in\ol{1,m}$. We have $f=h(\sum_sa_sg_s)$ and $0\neq g=\sum_sa_sg_s\in V_{k_1',k_2',k_3'}^{{\cal I}_1',{\cal I}_3'}$ with $\ell_{k_1',k_2',k_3'}^{{\cal I}_1',{\cal I}_3'}<l$. By inductional assumption,
$\phi(G_{k_1',k_2',k_3'}^{{\cal I}_1',{\cal I}_3'})$ is linearly independent, which implies $\phi(g)\neq 0$. On the other hand,
\begin{equation}0=\phi(f)=\phi(h)\phi(g)\lra \phi(g)=0\end{equation}
because $\phi(h)\neq 0$ due to $h$ being a monomial. It is a contradiction. Thus
\begin{equation}\label{a3.36}\mbox{g.c.d}(f_1,...,f_m)=1.\end{equation}

If $k_3=0$, then $f=a\xi$ with $\xi\in\mbb F[X_{J_1},Y_{J_3}]$. Then $0=\phi(f)=f$, which leads to a contradiction. Hence $k_3>0$.
Write
\begin{equation}f=\sum_{s=1}^mh_sg_s\;\;\mbox{with}\;h_s\in\mbb F[X_{J_1},Y_{J_3}],\;g_s\in Z^{(k_3)}\end{equation}
(cf. the below of (\ref{a3.2})).
Denote
\begin{equation}i_0=\min\{\msr I_1(g_s)\mid s\in\ol{1,m}\}.\end{equation}
Due to (\ref{a3.36}), there exists some $\G^y_{i_0,0}(f_s)\neq 0$. For any $\G^y_{i_0,0}(f_t)\neq 0$, we have $i_0\not\in \msr I_1(h_t)$.
Write
\begin{equation}\G^y_{i_0,0}(f_t)=h_t'g_t'\;\;\mbox{with}\;h'_t\in\mbb F[X_{J_1},Y_{J_3}],\;g'_t\in Z^{(k_3)}.\end{equation}

\begin{figure}[h]
\centering
\includegraphics[width=0.4\textwidth]{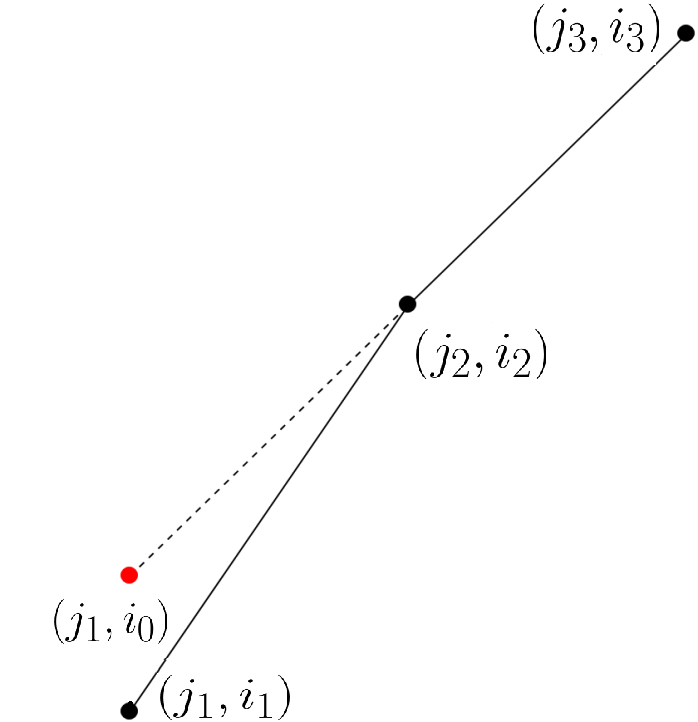}
\caption{\  The case $i_1=i_1'$}
\label{afig:1}
\end{figure}
There exist ${\cal I}^\ast_1\in \wht J_1^{k_1+k_3-\iota} $, ${\cal I}^\ast_3\in \wht J_3^{k_2+k_3} $ and $0<\iota\in\mbb Z$ such that
\begin{equation}\msr I_1(\G^y_{i_0,0}(f_t))={\cal I}^\ast_1,\;\msr I_3(\G^y_{i_0,0}(f_t))={\cal I}^\ast_3,\;|\msr I_1(g'_t)|=k_3-\iota.\end{equation}
If $\msr I(h_t'g_t')$ contains a 3-chain $(j_1,i_1) \prec   \left( j_2,i_2\right)\prec  \left( j_3,i_3\right) $, then $i_1=0$. So $\msr I(f_t)$ contains 3-chain $(j_1,i_0) \prec   \left( j_2,i_2\right)\prec  \left( j_3,i_3\right) $ (cf. Fig.1), which contradicts (\ref{a3.28}).

Thus $\msr I(h_t'g_t')$ does not contain 3-chains. Therefore
\begin{equation}\G^y_{i_0,0}(f)\in V_{k_1,k_2+\iota,k_3-\iota}^{{\cal I}_1^\ast,{\cal I}_3^\ast}.\end{equation}
Moreover,
\begin{equation}\mfk c({\cal I}_1^\ast)=\mfk c({\cal I}_1)-1,\;\mfk c({\cal I}_3^\ast)=\mfk c({\cal I}_3).\end{equation}
Hence
\begin{equation}\ell_{k_1,k_2+\iota,k_3-\iota}^{{\cal I}^\ast_1,{\cal I}^\ast_3}=l-1.\end{equation}
By inductional assumption, $\phi(G_{k_1,k_2+\iota,k_3-\iota}^{{\cal I}_1^\ast,{\cal I}_3^\ast})$ is linearly independent.
Note
\begin{equation}\phi(\G^y_{i_0,0}(f))=\phi(f)|_{x_{i_0}=0,y_{i_0}=1}=0.\end{equation}
So
\begin{equation}\label{a3.45}\G^y_{i_0,0}(f)= 0.\end{equation}

Take $i_1,i_2\in {\cal I}_1$ such that $i_1<i_2$ and there is no $i'\in  {\cal I}_1$ satisfying $i_1<i'<i_2$. Then
\begin{equation}\label{a3.46}
\G^y_{i_2,i_1}(f_s)\neq 0\qquad\for\;\;s\in\ol{1,m}.\end{equation}
Write
\begin{equation}\G^y_{i_2,i_1}(f_s)=\xi_s\eta_s\;\;\mbox{with}\;\xi_s \in\mbb F[X_{J_1},Y_{J_3}],\;\eta_s\in Z^{k_3}.\end{equation}
If $\msr I(\xi_s\eta_s)$ contains a 3-chain $(j_1',i_1') \prec   \left( j_2',i_2'\right)\prec  \left( j_3',i_3'\right)$, then $i_1\in\{i_1',i_2',i_3'\}$. When $i_1=i_1'$, $\msr I(f_s)$ contains the 3-chain $(j_1',i_2) \prec   \left( j_2',i_2'\right)\prec  \left( j_3',i_3'\right)$. In Case $i_1=i_2'$, $\msr I(f_s)$ contains the 3-chain $(j_1',i_1') \prec   \left( j_2',i_2\right)\prec  \left( j_3',i_3'\right)$ (cf. Fig.2(a)). If $i_1=i_3'$, $\msr I(f_s)$ contains the 3-chain $(j_1',i_1') \prec   \left( j_2',i_2'\right)\prec  \left( j_3',i_2\right)$ (cf. Fig.2(b)). These contradict (\ref{a3.28}). Thus $\msr I(\xi_s\eta_s)$ does not contain a 3-chain. There exist $0<\iota' \in \mathbb{Z}$ and ${\cal I}^\sharp_1\in \wht J_1^{k_1+k_3-\iota'}$ such that
\begin{equation}\G^y_{i_2,i_1}(f)\in V_{k_1,k_2,k_3}^{{\cal I}_1^\sharp,{\cal I}_3}.\end{equation}
\begin{figure}[h]
  \centering
 \subfigure[]{\includegraphics[width=0.4\textwidth]{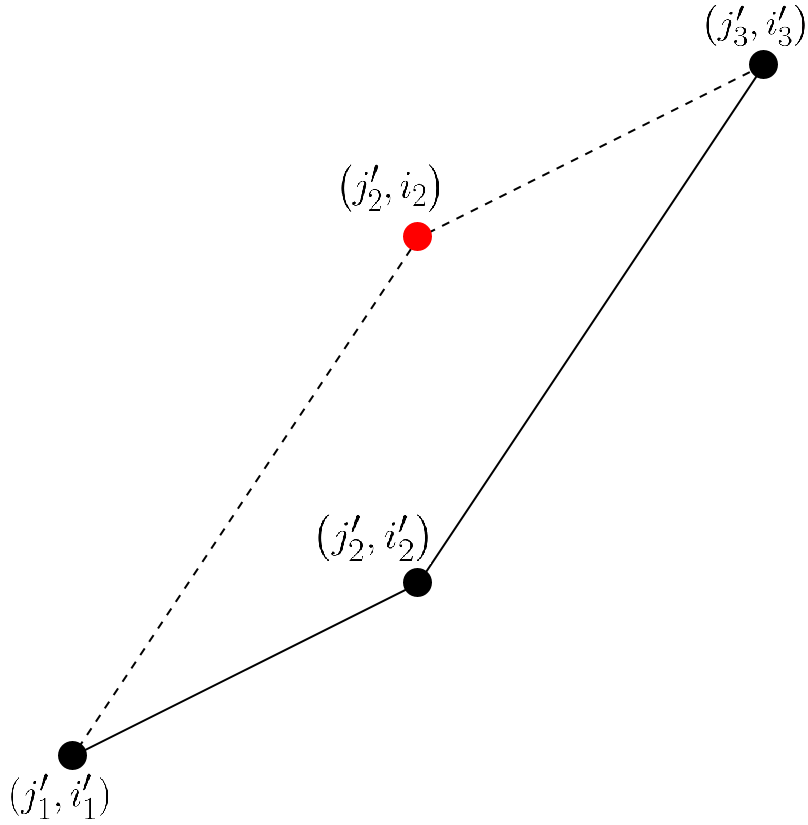}}
  \hfill
\subfigure[]{\includegraphics[width=0.4\textwidth]{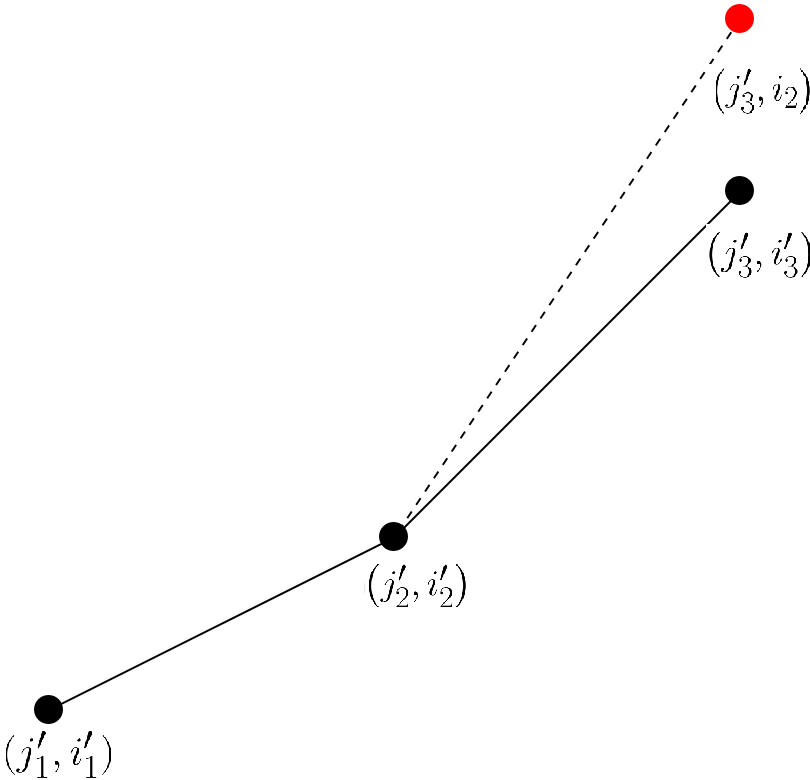}}
  \caption{\ The cases $i_1=i_2'$ and $i_1=i_3'$}
  \label{afig:2}
\end{figure}

In this case,
\begin{equation}\mfk c({\cal I}_1^\sharp)=\mfk c({\cal I}_1)-1.\end{equation}
So
\begin{equation}\ell_{k_1,k_2,k_3}^{{\cal I}_1^\sharp,{\cal I}_3}=l-1.\end{equation}
By inductional assumption, $\phi(G_{k_1,k_2,k_3}^{{\cal I}_1^\sharp,{\cal I}_3})$ is linearly independent.
Note
\begin{equation}\phi(\G^y_{i_2,i_1}(f))=\phi(f)|_{x_{i_2}=x_{i_1},y_{i_2}=y_{i_1}}=0.\end{equation}
This yields
\begin{equation}\label{a3.52}
\G^y_{i_2,i_1}(f)=0.\end{equation}

For $j_1,j_2\in J_3'$ with $j_1>j_2$, we define an associative algebra endomorphism $\G^x_{j_2,j_1}$ of $\msr C $ by
\begin{equation}\label{a3.53}
\G^x_{j_2,j_1}(z_{j_2,i})=z_{j_1,i},\quad \G^x_{j_2,j_1}(z_{j',i})=z_{j',i}\qquad\for\;\;j_2\neq j'\in J_3',\;i\in J_1'.\end{equation}
In particular,
\begin{equation}\G^x_{j_2,n+1}(z_{j_2,i})=x_i,\quad\G^x_{j_2,n+1}(y_{j_2})=0\end{equation}
by (\ref{a3.17}) and (\ref{a3.53}). Set
\begin{equation}j_0=\max\{\msr I_3(g_s)\mid s\in\ol{1,m}\}.\end{equation}
Take $j_1,j_2\in {\cal I}_3$ such that $j_1>j_2$ and there is no $j'\in {\cal I}_3$ satisfying $j_2<j'<j_1$.
Symmetrically, we have
\begin{equation}\label{a3.56}
\G^x_{j_0,n+1}(f)= 0\end{equation}and
\begin{equation}\label{a3.57}
\G^x_{j_2,j_1}(f)= 0.\end{equation}

Suppose that the set of distinct elements of ${\cal I}_1$ is
\begin{equation}\Upsilon_1=\{s_1,s_2,...,s_{\ell_1}\}\quad\mbox{with}\;\;s_1<s_2<\cdots<s_{\ell_1}.\end{equation}
By (\ref{a3.45}) and (\ref{a3.52}),
\begin{equation}\G_{s_1,0}^y(f)=\G_{s_2,s_1}^y(f)=\cdots=\G_{s_{\ell_1},s_{\ell_1-1}}^y(f)=0.\end{equation}
Write  the set of distinct elements of ${\cal I}_3$ as
\begin{equation}\Upsilon_3=\{t_1,t_2,...,t_{\ell_3}\}\quad\mbox{with}\;\;t_1<t_2<\cdots<t_{\ell_3}.\end{equation}
According to (\ref{a3.56}) and (\ref{a3.57}),
\begin{equation}\label{a3.61}\G_{t_1,t_2}^x(f)=\G_{t_2,t_3}^x(f)=\cdots=\G_{t_{\ell_3-1},t_{\ell_3}}^x(f)=\G_{t_{\ell_3},n+1}^x(f)=0.\end{equation}

If $k_1=0$ and $\G_{t_{\ell_3},n+1}^x(f)=0$, this implies $f=0$, which contradicts $f\neq 0$. Thus $k_1>0$. Symmetrically, $k_2>0$.
Suppose that $\msr I(f_\iota)$ does not contain any 2-chain $(j,0)\prec (j',s_1)$ with $j,j'\in {\cal I}_3$. Then
\begin{equation}j \geq j'\qquad \for\;\;j\in \msr I_3^y(f_\iota),\;(j',s_1)\in \msr I(f_\iota).\end{equation}
If $y_{t_{\ell_3}}\not|f_{\iota}$, there exists $(t_{\ell_3},i)\in\msr I(f_{\iota})$ for some $s_1<i\in{\cal I}_1$; that is $z_{t_{\ell_3},i}|f_{\iota}$. Since $\msr I(f_{\iota})$ does not contain 3-chain $(j_1,0)\prec (t_{\ell_3},i)\prec (n+1,i')$, we have
\begin{equation}i'\leq i\qquad\for\;\;i'\in \msr I^x_1(f_{\iota}).\end{equation} By $\G^x_{t_{\ell_3},n+1}(f)=0$, there exists another $f_{\iota'}$ such that $x_iz_{t_{\ell_3},i'}|f_{\iota'}$ with $i'<i$ and
\begin{equation}\msr I_3^y(f_\iota)=\msr I_3^y(f_{\iota'}).\end{equation}
Now $\msr I(f_{\iota'})$ contains a 3-chain $(j_1,0)\prec (t_{\ell_3},i')\prec (n+1,i)$, which contradicts (\ref{a3.28}).
Thus  $y_{t_{\ell_3}}|f_\iota$.

Assume $\msr I(f_{s'})$ contains a 2-chain $(j,0)\prec (j',s_1)$ with $j,j'\in {\cal I}_3$,
$y_{t_{\ell_3}}\not|f_{s'}$ and $z_{t_{\ell_3},i}|f_{\iota}$ for some $s_1<i\in{\cal I}_1$.
  Since $\msr I(f_{s'})$ does not contain 3-chain $(j_1,0)\prec (t_{\ell_3},i)\prec (n+1,i')$, we have
\begin{equation}i'\leq i\qquad\for\;\;i'\in \msr I^x_1(f_{s'}).\end{equation} By $\G^x_{t_{\ell_3},n+1}(f)=0$, there exists another $f_{{s'}'}$ such that $x_iz_{t_{\ell_3},i'}|f_{{s'}'}$ with $i'<i$ and
\begin{equation}\msr I_3^y(f_{s'})=\msr I_3^y(f_{{s'}'}).\end{equation}
Now $\msr I(f_{{s'}'})$ contains a 3-chain $(j_1,0)\prec (t_{\ell_3},i')\prec (n+1,i)$, which contradicts (\ref{a3.28}) (cf. Fig.3).
Thus $y_{t_{\ell_3}}|f_{s'}$.
\begin{figure}[h]
\centering
\includegraphics[width=0.5\textwidth]{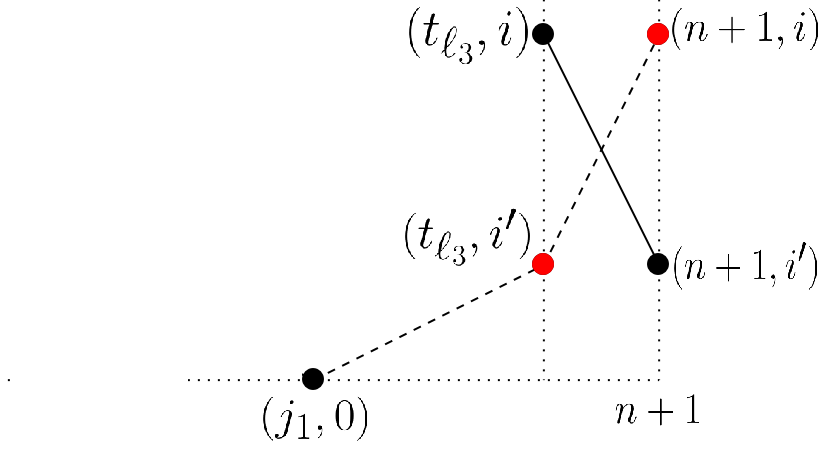}
\caption{\ If $y_{t_{\ell_3}}\not|f_{\iota}$}
\label{afig:4}
\end{figure}

Assume $\msr I(f_{s'})$ contains a 2-chain $(j,0)\prec (j',s_1)$ with $j,j'\in {\cal I}_3$,
$y_{t_{\ell_3}}\not|f_{s'}$ and $z_{t_{\ell_3},i}|f_{s'}$ forces $i=s_1$. If there exists $s_1<i'\in\msr I^x_1(f_{s'})$,
$\msr I(f_{s'})$ contains a 3-chain $(j,0)\prec (j',s_1)\prec (n+1,i')$, which contradicts (\ref{a3.28}). Therefore,
$\msr I^x_1(f_{s'})=\{s_1,s_1,...,s_1\}$, which yields $\msr I^x_1(\G_{t_{\ell_3},n+1}^x(f_{s'})) =\{s_1,s_1,...,s_1\}$. This leads $\G_{t_{\ell_3},n+1}^x(f)\neq 0$, and contradicts (\ref{a3.61}). So $y_{t_{\ell_3}}\mid f_{s'}$. This leads a contradiction to (\ref{a3.36}).
Then there does not exists $0\neq f\in V_{k_1,k_2,k_3}^{{\cal I}_1,{\cal I}_3}$ such that $\phi(f)=0$, that is, $\phi(G_{k_1,k_2,k_3}^{{\cal I}_1,{\cal I}_3})$ is linearly independent. \end{proof}

Let
\begin{equation}\label{a3.67}
\msr R_3=\left\la \left|\begin{array}{ccc}
z_{j_1,i_1}  & z_{j_1,i_2}& z_{j_1,i_3}\\
z_{j_2,i_1} & z_{j_2,i_2}& z_{j_2,i_3}\\
z_{j_3,i_1} & z_{j_3,i_2}& z_{j_3,i_3}
\end{array}\right|\;\mid\;i_1,i_2,i_3\in J_1',\;j_1,j_2,j_3\in J_3'\right\ra\end{equation}
be an ideal of $\msr C$.

\begin{lemma}
\label{alem:3.3}
For $k\in\mbb N$, we have $\ker \phi =\msr R_3$.\end{lemma}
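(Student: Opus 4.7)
The plan is to prove both inclusions $\msr R_3 \subseteq \ker \phi$ and $\ker \phi \subseteq \msr R_3$ separately.

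For the inclusion $\msr R_3 \subseteq \ker \phi$, I would exhibit a uniform rank-$2$ factorization of the matrix of $\phi$-images. Define row vectors $R_j = (x_j, -y_j)$ for $j \in J_3$, $R_{n+1} = (1, 0)$, and column vectors $C_i = (x_i, y_i)^T$ for $i \in J_1$, $C_0 = (0, -1)^T$. A direct check against (\ref{a3.17}) and the definition of $\phi$ shows $\phi(z_{j,i}) = R_j C_i$ for every $(j, i) \in J_3' \times J_1' \setminus \{(n+1, 0)\}$, while $R_{n+1} C_0 = 0$ matches the convention at the missing corner. Thus the $|J_3'| \times |J_1'|$ matrix $(\phi(z_{j,i}))$ factors as the product of a $|J_3'| \times 2$ matrix with a $2 \times |J_1'|$ matrix, so every $3 \times 3$ minor of it vanishes. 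This shows $\phi$ annihilates each generator of $\msr R_3$ listed in (\ref{a3.67}).

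For the reverse inclusion I would proceed in two steps: a straightening reduction, followed by an application of Lemma \ref{alem:3}. First, I would prove the claim that every $f \in \msr C$ is congruent modulo $\msr R_3$ to some $\bar f = \sum_\tau \bar f_\tau$, where $\tau$ ranges over types $(k_1, k_2, k_3, {\cal I}_1, {\cal I}_3)$, $\bar f_\tau \in V_{k_1, k_2, k_3}^{{\cal I}_1, {\cal I}_3}$ (see (\ref{a3.29})), and each monomial occurring in $\bar f_\tau$ has no 3-chain in its index set. The reduction proceeds by induction with respect to a lexicographic term order on monomials in $\msr C$: whenever a monomial $g$ admits a 3-chain $(j_1, i_1) \prec (j_2, i_2) \prec (j_3, i_3)$ in $\msr I(g)$, the corresponding $3 \times 3$ determinantal relation belongs to $\msr R_3$ and rewrites $g$ as a combination of strictly smaller monomials, so the process terminates at the standard monomials in $\bigcup_\tau G_{k_1,k_2,k_3}^{{\cal I}_1,{\cal I}_3}$. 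Now suppose $f \in \ker \phi$. Then $\phi(\bar f) = \phi(f) = 0$ by the first inclusion. On the other hand, for each type $\tau$ the image $\phi(\bar f_\tau)$ lies in a subspace of $\msr A$ determined by the total degree $k_1 + k_2 + 2 k_3 = |{\cal I}_1| + |{\cal I}_3|$, the parities $(k_1 \bmod 2, k_2 \bmod 2)$ of its bidegrees, and the multisets ${\cal I}_1, {\cal I}_3$ of $x$- and $y$-indices that must appear in every monomial summand. These invariants distinguish distinct types $\tau$, so $\phi(\bar f) = 0$ forces $\phi(\bar f_\tau) = 0$ for every $\tau$ individually. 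Lemma \ref{alem:3} then gives $\bar f_\tau = 0$ for every $\tau$, hence $\bar f = 0$ and $f \in \msr R_3$.

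The main obstacle I expect is establishing the straightening step cleanly when the 3-chain involves the boundary variables $x_i = z_{n+1, i}$ or $y_j = z_{j, 0}$. In those cases the corresponding $3 \times 3$ minor in (\ref{a3.67}) degenerates (one or two entries can become the formal symbol $z_{n+1, 0} = 0$), and one must verify that the rewriting of $g$ still produces strictly smaller monomials with respect to the chosen term order, so that the induction terminates. Once this combinatorial step is settled, the cross-type separation inside $\msr A$ is a routine bidegree-and-support bookkeeping, and Lemma \ref{alem:3} packages the conclusion.
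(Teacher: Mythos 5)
Your proposal follows essentially the same route as the paper: reduce modulo $\msr R_3$ to ``standard'' (3-chain-free) monomials, then invoke Lemma~\ref{alem:3}. Your rank-$2$ factorization $\phi(z_{j,i})=R_jC_i$ is a genuine improvement for the inclusion $\msr R_3\subseteq\ker\phi$, which the paper only asserts as ``easy to verify.'' The boundary concern you flag is real but resolvable exactly as you expect: only the single corner $(n+1,0)$ can degenerate, and when $z_{n+1,0}=0$ kills some terms of the $3\times 3$ determinantal relation, the rewriting simply has fewer replacement monomials; the strict monotonicity of $\msr I(\cdot)$ under the lexical multiset order on $(J_3'\times J_1')$-tuples (which is precisely what the paper uses) is unaffected, so termination still holds.

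There is, however, a concrete gap in your cross-type separation step. You claim the types $\tau=(k_1,k_2,k_3,{\cal I}_1,{\cal I}_3)$ are distinguished by total degree, the parities $(k_1\bmod 2, k_2\bmod 2)$, and the multisets ${\cal I}_1,{\cal I}_3$. That is false: take $\tau=(0,0,2,{\cal I}_1,{\cal I}_3)$ and $\tau'=(2,2,0,{\cal I}_1,{\cal I}_3)$ with $|{\cal I}_1|=|{\cal I}_3|=2$; they share all of your listed invariants. The images $\phi(\bar f_\tau)$ and $\phi(\bar f_{\tau'})$ are in fact disjoint, but the separating invariant is not parity. For any monomial $m$ occurring in $\phi(g)$ with $g$ of type $(k_1,k_2,k_3)$, one has $\deg_{X_{J_1}}(m)-\deg_{X_{J_3}}(m)=k_1$ and $\deg_{Y_{J_3}}(m)-\deg_{Y_{J_1}}(m)=k_2$, since each $z$-factor contributes either an $X_{J_1}X_{J_3}$ or a $Y_{J_1}Y_{J_3}$ pair. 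Together with the multisets ${\cal I}_1,{\cal I}_3$ read off from the exponents, this pins down the full type from any single monomial of the image, and the separation argument goes through. (The paper glosses over this with ``we may assume $f=\sum a_sf_s$'' for fixed $k_1,k_2,k_3,{\cal I}_1,{\cal I}_3$, so your attempt to make it explicit is welcome, but the stated invariant must be corrected.)
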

\begin{proof} We take $\leq$ be the lexical order on $J_3'\times J_1'$ (cf. (\ref{aa1.13}) and (\ref{a3.18})). For
\begin{equation}S_1=\{(j_1,i_1),...,(j_k,i_k)\},\quad S_2=\{(j_1',i_1'),...,(j_k',i_k')\}\end{equation}
with $(j_s,i_s),(j_s',i_s')\in J_3'\times J_1'$.
Since the notion of sets $\{...\}$ is independent of the order of its elements, we may assume
\begin{equation}(j_1,i_1)\leq\cdots \leq (j_k,i_k),\qquad (j_1',i_1')\leq\cdots \leq (j_k',i_k').\end{equation}
Take the lexical order $\leq$ on $(J_3'\times J_1')^k$ and define $S_1\leq S_2$ if
\begin{equation}((j_1,i_1),\dots,(j_k,i_k))\leq ((j_1',i_1'),\dots,(j_k',i_k'))\;\;\mbox{in}\;\;(J_3'\times J_1')^k.\end{equation}
Assume the above  $S_1\leq S_2$ and
\begin{equation}T_1=\{(m_1,\ell_1),...,(m_\iota,\ell_\iota)\},\quad T_2=\{(m_1',\ell_1'),...,(m_\iota',\ell_\iota')\}\end{equation}
with $(m_t,\ell_t),(m_t',\ell_t')\in J_3'\times J_1'$
also satisfying $T_1\leq T_2$. Then
\begin{equation}\{S_1,T_1\}\leq \{S_2,T_2\}.\end{equation}

Set
\begin{eqnarray}{\wht G}_{k_1,k_2,k_3}^{{\cal I}_1,{\cal I}_3}&=&\big\{g=\left(\prod_{t_1=1}^{k_1} x_{s_{1,t_1}}\right)\left(\prod_{t_2=1}^{k_2} y_{s_{2,t_2}}\right)  \left( \prod_{t_3=1}^{k_3} z_{j_{t_3},i_{t_3}}\right)\nonumber\\& &\mid
\msr I_1(g)={\cal I}_1,\;\msr I_3(g)={\cal I}_3;\;s_{1,1}\leq\cdots\leq s_{1,k_1},\;\nonumber\\ & &s_{2,1}\leq\cdots \leq s_{1,k_12},\;(j_1,i_1)\leq ...\leq (j_{k_3},i_{k_3})\big\}.\end{eqnarray}
 Let $f\in\ker \phi\cap \msr C_k$ for some $k\in\mbb N$.
We may assume
\begin{equation}f=\sum_{s=1}^ma_sf_s\qquad\mbox{with}\;\;a_s\in\mbb F,\;f_s\in {\wht G}_{k_1,k_2,k_3}^{{\cal I}_1,{\cal I}_3}\end{equation}
for some fixed $k_1,k_2,k_3$ satisfying $k_1+k_2+k_3=k$, and ${\cal I}_1\in  \wht J_1^{k_1+k_3},\;{\cal I}_3\in \wht J_3^{k_2+k_3}$.

Suppose some $\msr I(f_t)$ contains a 3-chain $(j_1,i_1)\prec (j_2,i_2)\prec (j_3,i_3)$. Write $f_t = z_{j_1,i_1}z_{j_2,i_2}z_{j_3,i_3} g_t$ with $g_t\in\msr C_{k-3}$. Then
\begin{eqnarray} f_t &=& z_{j_1,i_1}z_{j_2,i_2}z_{j_3,i_3} g_t\equiv (z_{j_1,i_2}z_{j_2,i_1}z_{j_3,i_3}
+z_{j_1,i_3}z_{j_2,i_2}z_{j_3,i_1}\nonumber\\ & &+z_{j_1,i_1}z_{j_2,i_3}z_{j_3,i_2} -z_{j_1,i_2}z_{j_2,i_3}z_{j_3,i_1}-z_{j_1,i_3}z_{j_2,i_1}z_{j_3,i_2})g_t\;\;(\mbox{mod}\;\msr R_3).\end{eqnarray}
Note
\begin{eqnarray}\msr I(f_t)&=&\{(j_1,i_1),(j_2,i_2),(j_3,i_3), \msr I(g_t)\}\nonumber\\&<&\{(j_1,i_2),(j_2,i_1),(j_3,i_3), \msr I(g_t)\}=\msr I(z_{j_1,i_2}z_{j_2,i_1}z_{j_3,i_3}g_t),\end{eqnarray}
\begin{eqnarray}\msr I(f_t)&=&\{(j_1,i_1),(j_2,i_2),(j_3,i_3), \msr I(g_t)\}\nonumber\\&<&\{(j_1,i_3),(j_2,i_2),(j_3,i_1), \msr I(g_t)\}=\msr I(z_{j_1,i_3}z_{j_2,i_2}z_{j_3,i_1}g_t),\end{eqnarray}
\begin{eqnarray}\msr I(f_t)&=&\{(j_1,i_1),(j_2,i_2),(j_3,i_3), \msr I(g_t)\}\nonumber\\&<&\{(j_1,i_1),(j_2,i_3),(j_3,i_2), \msr I(g_t)\}=\msr I(z_{j_1,i_1}z_{j_2,i_3}z_{j_3,i_2}g_t),\end{eqnarray}
\begin{eqnarray}\msr I(f_t)&=&\{(j_1,i_1),(j_2,i_2),(j_3,i_3), \msr I(g_t)\}\nonumber\\&<&\{(j_1,i_2),(j_2,i_3),(j_3,i_1), \msr I(g_t)\}=\msr I(z_{j_1,i_2}z_{j_2,i_3}z_{j_3,i_1}g_t),\end{eqnarray}
\begin{eqnarray}\msr I(f_t)&=&\{(j_1,i_1),(j_2,i_2),(j_3,i_3), \msr I(g_t)\}\nonumber\\&<&\{(j_1,i_3),(j_2,i_1),(j_3,i_2), \msr I(g_t)\}=\msr I(z_{j_1,i_3}z_{j_2,i_1}z_{j_3,i_2}g_t).\end{eqnarray}
Continue this process for each $f_s$, we can prove that there exists $f'\in V_{k_1,k_2,k_3}^{{\cal I}_1,{\cal I}_3}$ (cf. (\ref{a3.29})) such that
\begin{equation}f\equiv f'\;\;(\mbox{mod}\;\msr R_3).\end{equation}
It is easy to verify $\phi(\msr R_3)=\{0\}$ (cf. (\ref{a3.67})). Thus $0=\phi(f)=\phi(f').$ By Lemma 3.2, $f'=0$; that is $f\in \msr R_3$.
Since $\ker\phi$ is a graded subspace of $\msr C$, we have $\ker \phi=\msr R_3$.

\end{proof}

\section{Proof of the Main Theorem}
In this section, we prove our main theorem. We define a notion {\it ord} to single out the powers of $P$ in $M_k$, and find the ideals of the associated varieties by the linear relationship we found in the last section.

For $f \in M_k=V_k$ (cf. (\ref{a2.60})), we define the P-{\it order} of $f$ in $M_k$ as
\begin{align}\label{a4.1}
\mbox{ord}_k \left( f \right)
=\min \left\lbrace
s \in \mathbb{N} | f \in \mbox{Span}\{ TN_k, TN(k-r) P^r |
r=0,1,2,\cdots , s \}
\right\rbrace  .
\end{align}
In particular, $\mbox{ord}_k \left(f \right)=0  \iff  f\in TN_k$.

There are some properties of P-{\it order}:
\begin{enumerate}
    \item If $f_i \in M_k$, then
    \begin{align}
        \mbox{ord}_k\left( \sum_{i=1}^m f_i \right) \leq \max \left\lbrace \mathrm{ord}_k \left( f_i \right) \mid i \in \overline{1,m} \right\rbrace.
    \end{align}

    \item If $f \in TN_{k-r}P^r$, then
    \begin{align}
        \mbox{ord}_k\left(f \right) \leq r.
    \end{align}

    \item If $f \in M_{k_1}$ and $k_1 \leq k_2$, then
    \begin{align}
        \mbox{ord}_{k_1}\left(f \right) \geq \mathrm{ord}_{k_2}\left(f \right).
        \label{a4.6}
    \end{align}
\end{enumerate}

\begin{lemma}
\label{alem:4}
For $f \in M_{k}$, we have
\begin{align}
{\mfk d} \left( f \right)  \leq k+ \mbox{ord}_k\left(f \right) .
\end{align}
Moreover, ${\mfk d} \left( f \right) = k+ \mbox{ord}_k\left(f \right) $ if and only if $f \notin TN_{k-1}$.
\end{lemma}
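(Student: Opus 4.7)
The plan is to decompose $f\in M_k=V_k$ uniquely in the basis $\{T(x^\al y^\be)\mid \al_{n_1+1}\be_{n_1+1}=0\}$ of $\msr H_{\la\ell_1,\ell_2\ra}$ from (\ref{a2.8}), and to match the $\mfk d$-levels carried by the basis coefficients against the decomposition $f=f_0+\sum_{r=1}^{s} f_r$ coming from the definition of $\mbox{ord}_k$, with $s=\mbox{ord}_k(f)$, $f_0\in TN_k$, and $f_r\in TN(k-r)P^r$ for $r\geq 1$.

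The first preparatory step is to show that for any $f=\sum_j a_j T(x^{\al_j}y^{\be_j})$ written in this basis, $\mfk d(f)=\max\{\mfk d(x^{\al_j}y^{\be_j})\mid a_j\neq 0\}$. By (\ref{a2.49}), every monomial in $T(x^{\al_j}y^{\be_j})$ has $\mfk d$ equal to $\mfk d(x^{\al_j}y^{\be_j})$. Moreover, the operator $\td\Dlt+\ptl_{x_{n_1+1}}\ptl_{y_{n_1+1}}$ appearing in (\ref{a2.6}) does not touch $x_{n_1+1}$ or $y_{n_1+1}$ (the relevant term of $\td\Dlt$ cancels against $\ptl_{x_{n_1+1}}\ptl_{y_{n_1+1}}$), so every non-leading monomial of $T(x^{\al_j}y^{\be_j})$ carries both $x_{n_1+1}$ and $y_{n_1+1}$ to positive powers, and is therefore distinct from the leading monomial $x^{\al}y^{\be}$ of every basis vector. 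Hence leading monomials at the maximum $\mfk d$-level cannot cancel, and the claimed formula for $\mfk d(f)$ follows.

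The second preparatory step is to show that each $TN(k-r)P^r$ with $r\geq 1$ is contained in the span of basis vectors at pure $\mfk d$-level $k+r$. By (\ref{a2.56}) any such element equals $T(h)$, where $h$ lies in the span of monomials satisfying $\al_{n_1+1}\be_{n_1+1}=0$ (since $p\in P$ does not involve $x_{n_1+1}$ or $y_{n_1+1}$) and every monomial of $h$ has $\mfk d=k+r$ exactly (by (\ref{a2.59})). Combined with $f_0\in TN_k$ contributing only at $\mfk d\leq k$, this makes the decomposition $f=f_0+\sum f_r$ unique as a splitting by $\mfk d$-levels of basis coefficients, so the minimality of $s$ in the definition of $\mbox{ord}_k$ forces $f_s\neq 0$ whenever $s\geq 1$.

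The conclusion then follows quickly. The upper bound $\mfk d(f)\leq k+s$ is immediate from the two preparatory steps. For equality, if $s\geq 1$ then $f_s\neq 0$ contributes a nonzero basis coefficient at $\mfk d=k+s$ that is uncancelable by any other piece, giving $\mfk d(f)=k+s$, and $f\notin TN_{k-1}$ trivially. If $s=0$, then $f\in TN_k$, and $f\notin TN_{k-1}$ is equivalent to having a nonzero basis coefficient at $\mfk d=k$, which again gives $\mfk d(f)=k$. Conversely, $f\in TN_{k-1}$ forces $\mbox{ord}_k(f)=0$ and $\mfk d(f)\leq k-1<k+\mbox{ord}_k(f)$, so equality fails. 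The main technical hurdle is the first preparatory step: verifying that $T$ is $\mfk d$-homogeneous and that its leading and non-leading monomials are disjoint across basis vectors. Once this is spelled out, the rest is clean bookkeeping against the $\mfk d$-graded basis.
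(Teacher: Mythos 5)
Your proof is correct and follows essentially the same strategy as the paper: decompose $f$ according to the spanning set in (\ref{a2.60}), observe that the pieces $TN(k-r)P^r$ sit at pure $\mfk d$-level $k+r$ while $TN_k$ sits at levels $\le k$, and read off both the inequality and the equality criterion from the highest nonvanishing level. The one place you go beyond the paper's write-up is in justifying that no cancellation can depress the top $\mfk d$-level: the paper's equation (\ref{a4.9}) asserts $\mfk d(f)=\max\{\mfk d(g_{i,s_i}f_{i,s_i}),\mfk d(f_0)\}$ without comment, whereas you argue this explicitly via the basis (\ref{a2.8}) and the observation that the non-leading monomials produced by $T$ all carry positive powers of both $x_{n_1+1}$ and $y_{n_1+1}$, so the leading monomials of distinct basis vectors at the top level cannot be annihilated. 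This fills a small rigor gap rather than changing the route; the rest of your bookkeeping matches the paper.
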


\begin{proof} For
$f \in V_{k} $ with
$\mbox{ord}_k\left(f \right)=r$, we write
\begin{align}\label{b4.6}
f=\sum_{s_r=1}^{\ell_r} g_{r,s_1} f_{r,s_r} +  \sum_{s_{r-1}=1}^{\ell_{r-1}} g_{r-1,s_{r-1}} f_{r-1,s_{r-1}} + \cdots +f_{0},
\end{align}
where $ g_{i,s_i}  \in P^{\left(i \right)}$,
$f_{i,s_i}  \in TN(k-i)$ for $  s_i\in\ol{1,\ell_i} $, and $f_{0} \in TN_k$.  Then
\begin{eqnarray}\label{a4.9}
{\mfk d} \left( f \right) & =&\max \left\lbrace {\mfk d} \left( g_{i,s_i} f_{i,s_i}  \right),f_0\mid 1 \leq i \leq r,\;s_i \in \ol{1,\ell_i}  \right\rbrace\nonumber\\
&=&\max \left\lbrace {\mfk d} \left( f_{i,s_i}  \right)+2i,  {\mfk d} (f_0)\mid 1 \leq i \leq r,\;s_i \in \ol{1,\ell_i}  \right\rbrace\nonumber \\
&\leq &\max\left\lbrace k+i\mid i\in\ol{0,r} \right\rbrace =k+r.
\end{eqnarray}
In other words,
\begin{align}
{\mfk d}\left(f \right)  \leq k+ \mbox{ord}_k \left(f \right) .
\end{align}
Note that in Equation (\ref{a4.9}), equality holds if and only if ${\mfk d} (f_0)=k$. Thus ${\mfk d} \left( f \right) = k+ \mbox{ord}_k\left(f \right) $ if and only if $f \notin TN_{k-1}$.

\end{proof}

Fix $1\leq p\in\mbb N$, we recall $\mfk g^p\subset U(\mfk g)$ with $\mfk g=sl(n)$ and $M_k=U_k(\mfk g)(M_0)$ (cf. (\ref{a2.11})).
Define
\begin{equation}\label{a4.12}
\mfk k_{(p)}=\{\xi\in\mfk g^p\mid \xi(M_k)\subset M_{k+p-1},\;k\in\mbb N\}.\end{equation}
Recall the associated graded $\mfk g$-module $\ol{M}$ in (\ref{a1.10}) and
the annihilator of $\ol{M}$ in (\ref{a1.11}) is a graded ideal in $S( \mfk {g})$. Define $S_{(p)}(\mfk g)=U_p(\mfk g)/U_{p-1}(\mfk g)$ and
\begin{align}\label{a4.13}
I_{(p)}=\left\lbrace \eta \in S_{(p)} \left( \mathfrak{g} \right)  | \eta(v)=0\ \for\  v \in \ol{M} \right\rbrace.
\end{align}
Then $\operatorname{Ann}_{S( \mfk g)} (\ol{M})=\sum_{p=1}^\infty I_{(p)}$ and we are going to compute $I_{(p)}$ for different values of $p$.
First, we assume $|J_1|,|J_2|,|J_3|\geq 2$. Denote
\begin{eqnarray}
\label{a4.14}
 \Delta_{i_1,i_2,i_3}^{j_1,j_2,j_3}
 &=&   E_{j_1,i_1} E_{j_2,i_2} E_{j_3,i_3}+E_{j_1,i_2} E_{j_2,i_3} E_{j_3,i_1}+E_{j_1,i_3} E_{j_2,i_1} E_{j_3,i_2}\nonumber\\
 & &-E_{j_1,i_2} E_{j_2,i_1} E_{j_3,i_3}-E_{j_1,i_3}E_{j_2,i_2} E_{j_3,i_1}-E_{j_1,i_1} E_{j_2,i_3} E_{j_3,i_2},\end{eqnarray}
\begin{align}\begin{split}\label{a4.15}
\Delta_{i_1,i_2}^{j_1,j_2}=E_{j_1,i_1} E_{j_2,i_2}-E_{j_1,i_2} E_{j_2,i_1}\end{split}\end{align}
for $i_1,i_2,i_3,j_1,j_2,j_3\in\ol{1,n}$.

 According to (\ref{a2.1})-(\ref{a2.3}), we have
 \begin{equation}\td{\pi}(E_{j_1,i_1})=-x_{i_1}x_{j_1}-y_{i_1}\ptl_{y_{j_1}},
 \;\td{\pi}(E_{j_2,i_2})=x_{j_2}\ptl_{x_{i_2}}+y_{i_2}y_{j_2}\end{equation}
 \begin{equation}
 \td{\pi}(E_{j_3,i_3})=-x_{i_3}x_{j_3}+y_{i_3}y_{j_3}\qquad\for\;\;(j_s,i_s)\in L_s.\end{equation}
 (cf. (\ref{a2.51})). Thus
\begin{equation}E_{j,i}(M_0) \not\subset M_0,\;E_{j',i'}(M_k) \subset M_k\qquad\mbox{when}\;\;(j,i)\in L,\;(j',i')\not\in L,\;k\in\mbb N\end{equation}
 and
\begin{align}\label{a4.19}
I_{(1)}=\mbox{Span}\{E_{j,i}  |  \left(j,i \right)  \notin L\}.
\end{align}

\begin{lemma}\label{alem:4.2} If $\ell_1,\ell_2\leq 0$,
\begin{equation}\label{a4.20}
I_{(2)}\equiv \left\{ \left. \Delta^{j_1,j_1'}_{i_1,i_l'}  ,\Delta^{j_2,j_2'}_{i_2 ,i_2'} \right\vert (j_1,i_1), (j_1',i_1')\in L_1, (j_2,i_2), (j_2',i_2')\in L_2\right\}\quad (\mbox{mod}\; \la I_{(1)}\ra).
\end{equation}
If $\ell_1\leq 0$ and $\ell_2>0$,
\begin{equation}\label{a4.21}
I_{(2)} \equiv \left\{ \left. \Delta^{j_2,j_2'}_{i_2 ,i_2'}  \right\vert    (j_2,i_2), (j_2',i_2')\in L_2\right\}\quad (\mbox{mod}\; \la I_{(1)}\ra),
\end{equation}
and
\begin{equation} \label{b4.19}
\left\{\left. \left(\Delta^{j_1,j_1 '}_{i_1,i_l'} \right)^{\ell_2+1} \right\vert  (j_1,i_1), (j_1',i_1')\in L_1 \right\}  \in I_{(2\ell_2+2)} .
\end{equation}
In the case $\ell_1>0$ and $\ell_2\leq 0$,
\begin{equation}\label{a4.22}
I_{(2)} \equiv \left\{\left. \Delta^{j_1,j_1 '}_{i_1,i_l'} \right\vert  (j_1,i_1), (j_1',i_1')\in L_1 \right\}\quad (\mbox{mod}\; \la I_{(1)}\ra), 
\end{equation}
and
\begin{equation} \label{b4.21}
 \left\{ \left. \left(\Delta^{j_2,j_2'}_{i_2 ,i_2'} \right)^{\ell_1+1}  \right\vert    (j_2,i_2), (j_2',i_2')\in L_2\right\} \in I_{(2\ell_1+2)} .
\end{equation}
\end{lemma}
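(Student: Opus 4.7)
The plan is to characterize $I_{(p)}$ via the criterion that an element $\xi \in U_p(\mfk g)$ represents a member of $I_{(p)}$ precisely when $\xi(M_k) \subset M_{k+p-1}$ for every $k$. Using the identity $\xi E_{a,b} = E_{a,b}\xi + [\xi, E_{a,b}]$ together with Proposition~\ref{ap2.1} and the fact that $[\xi, E_{a,b}] \in U_p(\mfk g)$, an induction on $k$ reduces the containment $\xi(M_k) \subset M_{k+p-1}$ to the base case $\xi(M_0) \subset M_{p-1}$. This base case can be verified explicitly using the spanning set $\msr S_0$ of $M_0$ coming from (\ref{a2.11}) and the representation formulas (\ref{a2.1})--(\ref{a2.3}).

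For the case $\ell_1,\ell_2 \leq 0$, the subspace $M_0$ is spanned by monomials $X_{J_1}^{m_1} Y_{J_3}^{m_2}$ carrying no $Y_{J_2}$ factor. Consequently, the derivative part $-y_{i_1}\ptl_{y_{j_1}}$ of $\td\pi(E_{j_1,i_1})$ with $(j_1,i_1)\in L_1$ acts trivially on $M_0$, and $\Delta^{j_1,j_1'}_{i_1,i_1'}$ applied to $M_0$ yields only the leading antisymmetrization $(x_{i_1}x_{j_1})(x_{i_1'}x_{j_1'}) - (x_{i_1}x_{j_1'})(x_{i_1'}x_{j_1})$, which vanishes identically; the $L_2$-based $\Delta$ is handled symmetrically, confirming both families lie in $I_{(2)}$. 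For the converse inclusion, writing any $\eta \in I_{(2)}$ modulo $\la I_{(1)}\ra$ as $\sum a_{(j,i),(j',i')} E_{j,i}E_{j',i'}$ with $(j,i),(j',i') \in L$ and extracting the leading $\mfk d$-homogeneous component of $\eta(f)$ for $f \in M_0$ produces a polynomial identity in the formal symbols $\{x_ix_j\}_{L_1}, \{y_iy_j\}_{L_2}, \{x_ix_j - y_iy_j\}_{L_3}$. Lemma~\ref{alem:2} (describing $\ker \vf_x = \ker \vf_y = \msr R_2$) then forces the $L_3\cdot L_3$, $L_1\cdot L_3$, $L_2\cdot L_3$, and $L_1\cdot L_2$ coefficients to reduce to zero modulo $\la I_{(1)}\ra$ and pins the surviving $L_1\cdot L_1$ and $L_2\cdot L_2$ contributions to the span of the displayed $\Delta$'s.

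For the mixed-sign cases, $M_0 = TN\begin{pmatrix}-\ell_1 & 0 & 0 \\ 0 & \ell_2 & 0\end{pmatrix}$ carries total $Y_{J_2}$-degree $\ell_2$, so the derivative contribution of $L_1$-operators is revived; direct computation shows $\Delta^{j_1,j_1'}_{i_1,i_1'}(M_0)$ carries a non-trivial residue of $\mfk d$-degree $2$ which typically fails to land in $M_1$, so $\Delta^{j_1,j_1'}_{i_1,i_1'} \notin I_{(2)}$. Iterating, each successive application of $\Delta^{j_1,j_1'}_{i_1,i_1'}$ strictly decreases the total $Y_{J_2}$-content of the leading residue by one, so after $\ell_2+1$ applications the $Y_{J_2}$ reservoir is exhausted, the top $\mfk d$-component vanishes, and $(\Delta^{j_1,j_1'}_{i_1,i_1'})^{\ell_2+1}$ lies in $I_{(2\ell_2+2)}$; the $L_2$-based $\Delta$'s persist in $I_{(2)}$ exactly as before, and the symmetric case $\ell_1 > 0, \ell_2 \leq 0$ follows by the $x \leftrightarrow y$ duality. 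The main obstacle throughout is the converse direction: eliminating potential quadratic annihilators built from $L_3$ operators whose symbol structure $-x_ix_j+y_iy_j$ makes them natural candidates for cancellation via $\msr R_2$- or $\msr R_3$-type relations requires the full combinatorial strength of the $3$-chain analysis of Section~3, in particular Lemma~\ref{alem:3.3}, to track exactly which leading-symbol combinations can vanish modulo $\la I_{(1)}\ra$-induced relations.
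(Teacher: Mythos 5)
Your proposal follows the same overall strategy as the paper: use the fixed spanning set of $M_0$, the degree function $\mfk d$, and the symbol-level kernel description $\ker\vf_x=\ker\vf_y=\msr R_2$ from Lemma~\ref{alem:2} to pin down $I_{(2)}$ modulo $\la I_{(1)}\ra$, and then use the $Y_{J_2}$-degree budget $\ell_2$ to establish the membership $(\Delta^{j_1,j_1'}_{i_1,i_1'})^{\ell_2+1}\in I_{(2\ell_2+2)}$. One genuine difference: you invoke the reduction that $\xi(M_0)\subset M_{p-1}$ already implies $\xi(M_k)\subset M_{k+p-1}$ for all $k$ (via $\xi E_{a,b}=E_{a,b}\xi+[\xi,E_{a,b}]$, $[\xi,E_{a,b}]\in U_p(\mfk g)$, and $U_p(\mfk g)(M_{k-1})\subset M_{k+p-1}$). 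This reduction is correct and shortens the ``positive'' direction considerably: on $M_0$ the formula $\Delta^{j_1,j_1'}_{i_1,i_1'}=(x_{i_1'}y_{i_1}-x_{i_1}y_{i_1'})(x_{j_1'}\ptl_{y_{j_1}}-x_{j_1}\ptl_{y_{j_1'}})$ is immediately seen to kill every generator in the $\ell_1,\ell_2\leq 0$ case, and $(\Delta^{j_1,j_1'}_{i_1,i_1'})^{\ell_2+1}$ kills $M_0$ in the mixed-sign case because the $y_{J_2}$-degree of $M_0$ is exactly $\ell_2$. The paper instead verifies $\Delta(v_i)\in M_{i+1}$ for every $v_i=T(x^\alpha y^\beta)\in M_i$ and then transports this through $P^s$-factors via (\ref{a4.50}); your version dispenses with that bookkeeping. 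Two small corrections: for $I_{(2)}$ the relevant symbol lemma is Lemma~\ref{alem:2} (the $\msr R_2$ description), not the $3$-chain machinery of Lemma~\ref{alem:3.3} which only becomes necessary for $I_{(p)}$ with $p\geq 3$; and the cross-term constraints ($L_1\cdot L_3$, $L_2\cdot L_3$, $L_3\cdot L_3$, $L_1\cdot L_2$) come from the straightforward antisymmetry relations (\ref{a4.27})--(\ref{a4.31}) rather than from Lemma~\ref{alem:2}, which enters only to reduce the $L_1\cdot L_1$ and $L_2\cdot L_2$ blocks to the $\Delta$-span.
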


\begin{proof}
 Suppose
\begin{align}
\xi=  \sum_{(j,i),(j',i')\in L}a_{j,i;j',i'}E_{j,i}E_{j',i'}\in \mfk k_{(2)}
\label{a4.23}
\end{align}
and set
\begin{equation}\label{a4.24}v_0=x_1^{\max\{0,-\ell_1\}} y_n^{\max\{0,-\ell_2\}}x_{n_1+1}^{\max\{\ell_1,0\}}y_{n_1+1}^{\max\{\ell_2,0\}} \in M_0. \end{equation}
Then $\xi(v_0) \in M_1$ by (\ref{a4.12}).
Note that
\begin{equation}\label{a4.25}
\mfk d(x_{i_1}x_{j_1})=\mfk d(y_{i_2}y_{j_2})=1,\;\mfk d(x_{i_3}x_{j_3})=\mfk d(y_{i_3}y_{j_3})=2\end{equation}
as multiplication operators on $M$ for $(j_s,i_s)\in L_s$ (cf. (\ref{a2.46})-(\ref{a2.48})).
Hence
\begin{equation}\mbox{the coefficient of}\; x_{i_1}x_{i_1'}x_{j_1}x_{j_1'}v_0=a_{j_1,i_1;j_1',i_1'}+a_{j_1',i_1';j_1,i_2}+
a_{j_1',i_1;j_1,i_1'}+a_{j_1,i_1';j'_1,i_1}=0,\end{equation}
\begin{equation}\label{a4.27}
\mbox{the coefficient of}\; y_{i_2}y_{i_2'}y_{j_2}y_{j_2'}v_0=a_{j_2,i_2;j_2',i_2'}+a_{j_2',i_2';j_2,i_2}+
a_{j_2',i_2;j_2,i_2'}+a_{j_2,i_2';j'_2,i_2}=0,\end{equation}
\begin{equation}\mbox{the coefficient of}\; x_{i_3}x_{j_3}y_{i_3'}y_{j_3'}v_0=a_{j_3,i_3;j_3',i_3'}+a_{j_3',i_3';j_3,i_3}=0,\end{equation}
\begin{equation}\mbox{the coefficient of}\; x_{i_1}x_{j_1}y_{i_3}y_{j_3}v_0=a_{j_1,i_1;j_3,i_3}+a_{j_3,i_3;j_1,i_1}=0,\end{equation}
\begin{equation}\mbox{the coefficient of}\; x_{i_1}x_{j_1}y_{i_2}y_{j_2}v_0=a_{j_1,i_1;j_2,i_2}+a_{j_2,i_2;j_1,i_1}=0,\end{equation}
\begin{equation}\label{a4.31}
\mbox{the coefficient of}\; y_{i_2}y_{j_2}y_{i_3}y_{j_3}v_0=a_{j_2,i_2;j_3,i_3}+a_{j_3,i_3;j_2,i_2}=0\end{equation}
for $(j_s,i_s)\in L_s$ with $s=1,2,3.$ On the other hand,
\begin{equation}\label{b4.29}
E_{j,i}E_{j',i'}-E_{j,i}E_{j',i'}\in U_1(\mfk g)=\mfk g.\end{equation}
By Lemma 3.1,
\begin{eqnarray}\label{a4.33}
\xi& \in &\sum_{(j_1,i_1),(j_1',i_1')\in L_1;(j_1,i_1)\leq (j_1',i_1')}a_{j_1,i_1;j_1',i_1'}(E_{j_1,i_1}E_{j_1',i_1'}-
E_{j_1,i_1'}E_{j_1',i_1})\nonumber\\& &+\sum_{(j_2,i_2),(j_2',i_2')\in L_2;(j_2,i_2)\leq (j_2',i_2')}a_{j_2,i_2;j_2',i_2'}
(E_{j_2,i_2}E_{j_2',i_2'}-E_{j_2,i_2'}E_{j_2',i_2}) +U_1(\mfk g).\end{eqnarray}

Let $v_i=T(x^\al y^\be)\in M_i$ with ${\mfk d}(T(x^\al y^\be))={\mfk d}(x^\al y^\be) =i$ (cf. (\ref{a2.50})).
If $j_1=j_1'$, we naturally have
\begin{equation}E_{j_1,i_1}E_{j_1',i_1'}-
E_{j_1,i_1'}E_{j_1',i_1}=0.\end{equation}
So we assume $j_1\neq j_1'$. Suppose that there exists $j_3\in J_3$ such that $\be_{j_3}>0$.
According to (\ref{a1.7}),
\begin{equation}\label{a4.35}
 [ \td{\Dlt} , x_{j_1'}\ptl_{y_{j_1}}-x_{j_1}\ptl_{y_{j_1'}}]= \ptl_{y_{j_1}}\ptl_{y_{j_1'}}-\ptl_{y_{j_1'}}\ptl_{y_{j_1}}=0.\end{equation}

Assume $\be \neq 0$. Since
\begin{equation}x_{i_1}T=Tx_{i_1},\;\;y_{j_3}T=Ty_{j_3}\qquad\for\;\;i_1\in J_1, j_3\in J_3\end{equation}
by (\ref{a1.7}) and (\ref{a2.6}), we have
\begin{eqnarray}\label{b4.36}
\Delta_{i_1,i_1'}^{j_1,j_1'}(v_i)&=&(E_{j_1,i_1}E_{j_1',i_1'}-
E_{j_1,i_1' }E_{j_1',i_1})(v_i)
\nonumber\\&=&
 (x_{i_1}x_{j_1}+y_{i_1}\ptl_{y_{j_1}})(x_{i_1'}x_{j_1'}+y_{i_1'}\ptl_{y_{j_1'}})
-(x_{i_1'}x_{j_1}+y_{i'_1}\ptl_{y_{j_1}})(x_{i_1}x_{j_1'}+y_{i_1}\ptl_{y_{j_1'}}) (v_i)
\nonumber\\&=&(x_{i_1'}y_{i_1}-x_{i_1}y_{i_1'})(x_{j_1'}\ptl_{y_{j_1}}-x_{j_1}\ptl_{y_{j_1'}})(T(x^\al y^\be))\nonumber\\&=&(x_{i_1'}y_{i_1}y_{j_3}-x_{i_1}y_{i_1'}y_{j_3})(x_{j_1'}\ptl_{y_{j_1}}-x_{j_1}\ptl_{y_{j_1'}})( T(x^\al y^{\be-\es_{j_3}}) )
\nonumber\\&=&E_{j_3,i_1}(  (x_{j_1'}\ptl_{y_{j_1}}-x_{j_1}\ptl_{y_{j_1'}}) T(x^{\al+\es_{i_1'}} y^{\be-\es_{j_3}}))\nonumber\\& &
-E_{j_3,i_1'}(  (x_{j_1'}\ptl_{y_{j_1}}-x_{j_1}\ptl_{y_{j_1'}})T(x^{\al+\es_{i_1 }} y^{\be-\es_{j_3}}) )
 \end{eqnarray}
Then \begin{equation}   \td{\Dlt} ( (x_{j_1'}\ptl_{y_{j_1}}-x_{j_1}\ptl_{y_{j_1'}})T(x^{\al+\es_{i_1'}} y^{\be-\es_{j_3}}) )=
(x_{j_1'}\ptl_{y_{j_1}}-x_{j_1}\ptl_{y_{j_1'}})\td{\Dlt} (T(x^{\al+\es_{i_1'}} y^{\be-\es_{j_3}})   )=0.\end{equation}
By (\ref{a1.4}), we have $ (x_{j_1'}\ptl_{y_{j_1}}-x_{j_1}\ptl_{y_{j_1'}})T(x^{\al+\es_{i_1'}} y^{\be-\es_{j_3}}) \in M$.
Furthermore, according to (\ref{a2.50}),
\begin{eqnarray} &&\mfk d\left( (x_{j_1'}\ptl_{y_{j_1}}-x_{j_1}\ptl_{y_{j_1'}})T(x^{\al+\es_{i_1'}} y^{\be-\es_{j_3}}) \right)
\nonumber\\&=& \mfk d\left( (x_{j_1'}\ptl_{y_{j_1}}-x_{j_1}\ptl_{y_{j_1'}}) x^{\al+\es_{i_1'}} y^{\be-\es_{j_3}}  \right)\leq i \end{eqnarray}
So \begin{equation} (x_{j_1'}\ptl_{y_{j_1}}-x_{j_1}\ptl_{y_{j_1'}})T(x^{\al+\es_{i_1'}} y^{\be-\es_{j_3}}) \in M_{i }.\end{equation}
Similarly, we have \begin{equation} (x_{j_1'}\ptl_{y_{j_1}}-x_{j_1}\ptl_{y_{j_1'}})T(x^{\al+\es_{i_1 }} y^{\be-\es_{j_3}}) \in M_{i } .\end{equation}
Hence \begin{equation}\Delta^{j_1,j_1'}_{i_1,i_1'}(v_i)  \in M_{i+1}.\end{equation}

Next we consider $\be=0$. If $j_1,j_1'\neq n_1+1$, then
 \begin{eqnarray}\Delta_{i_1,i_1'}^{j_1,j_1'}(v_i)& =&(x_{i_1'}y_{i_1}-x_{i_1}y_{i_1'})(x_{j_1'}\ptl_{y_{j_1}}-x_{j_1}\ptl_{y_{j_1'}})(T(x^\al))
 \nonumber\\& =&(x_{i_1'}y_{i_1}-x_{i_1}y_{i_1'})(T(x_{j_1'}\ptl_{y_{j_1}}-x_{j_1}\ptl_{y_{j_1'}})(x^\al))=0 \end{eqnarray}
by (\ref{a4.35}). So we may assume $j_1=n_1+1$ and $j_1'\neq n_1+1$.
 Now
\begin{equation}v_i=T(x^\al)=x^\al+\sum_{\iota=1}^\infty\frac{\al_{n_1+1}!(x_{n_1+1}y_{n_1+1})^\iota(\td\Dlt+\ptl_{x_{n_1+1}}\ptl_{y_{n_1+1}})^{\iota-1}}
{(\al_{n_1+1}+\iota)!\iota!}\sum_{j=n_2+1}^n\al_jy_jx^{\al-\es_j}.\end{equation}
Now
\begin{eqnarray}
\Delta_{i_1,i_1'}^{j_1,j_1'}(v_i)
&=&\sum_{\iota=1}^\infty\sum_{j=n_2+1}^n\frac{\al_{n_1+1}!\al_j}{(\al_{n_1+1}+\iota)!\iota!}y_j(x_{i_1'}y_{i_1}-x_{i_1}y_{i_1'})
\nonumber\\&&\times(x_{j_1'}\ptl_{y_{n_1+1}}-x_{n_1+1}\ptl_{y_{j_1'}})(x_{n_1+1}y_{n_1+1})^\iota(\td\Dlt+\ptl_{x_{n_1+1}}\ptl_{y_{n_1+1}})^{\iota-1}
x^{\al-\es_j}\nonumber\\&=&\sum_{j=n_2+1}^n\frac{\al_j(x_{i_1'}y_jy_{i_1}-x_{i_1}y_jy_{i_1'})}{\al_{n_1+1}+1}\sum_{\iota=1}^\infty\frac{(\al_{n_1+1}+1)!}{(\al_{n_1+1}+1+\iota-1)!(\iota-1)!}
\nonumber\\&&\times (x_{n_1+1}y_{n_1+1})^{\iota-1}(\td\Dlt+\ptl_{x_{n_1+1}}\ptl_{y_{n_1+1}})^{\iota-1}
x^{\al+\es_{n_1+1}+\es_{j_1'}-\es_j}
\nonumber\\&=&\sum_{j=n_2+1}^n\frac{\al_j(x_{i_1'}(y_jy_{i_1}-x_jx_{i_1})+x_{i_1}(x_jx_{i_1'}-y_jy_{i_1'}))}{\al_{n_1+1}+1}T(x^{\al+\es_{n_1+1}+\es_{j_1'}-\es_j})
\nonumber\\&=&\sum_{j=n_2+1}^n\frac{\al_j(x_{i_1'}E_{j,i_1}-x_{i_1}E_{j,i_1'})}{\al_{n_1+1}+1}T(x^{\al+\es_{n_1+1}+\es_{j_1'}-\es_j})
\nonumber\\&=&\sum_{j=n_2+1}^n\frac{\al_j}{\al_{n_1+1}+1}
[E_{j,i_1}(T(x^{\al+\es_{i_1'}+\es_{n_1+1}+\es_{j_1'}-\es_j}))\nonumber\\&&-E_{j,i_1'}(T(x^{\al+\es_{i_1}+\es_{n_1+1}+\es_{j_1'}-\es_j}))],
\end{eqnarray}where we have used
\begin{eqnarray}&&[x_{j_1'},(\td\Dlt+\ptl_{x_{n_1+1}}\ptl_{y_{n_1+1}})^{\iota-1}]
(x^{\al+\es_{n_1+1}-\es_j})
\nonumber\\&=&(\iota-1)(\td\Dlt+\ptl_{x_{n_1+1}}\ptl_{y_{n_1+1}})^{\iota-2}\ptl_{y_{j_1'}}(x^{\al+\es_{n_1+1}-\es_j})=0.
\end{eqnarray}
Since
\begin{equation}\mfk d(x_{n_1+1})=\mfk d(x_{j_1'})=1,\;\;\mfk d(x_j)=2\end{equation}
as operators on $\msr A$, we have
\begin{equation}\mfk d(T(x^{\al+\es_{i_1'}+\es_{n_1+1}+\es_{j_1'}-\es_j})),\mfk d(T(x^{\al+\es_{i_1}+\es_{n_1+1}+\es_{j_1'}-\es_j}))\leq i.\end{equation}
Thus
\begin{equation}\Delta_{i_1,i_1'}^{j_1,j_1'}(v_i)\in M_{i+1}.\end{equation} 

Suppose $\ell_1,\ell_2<0$, then there always exist $j_3\in J_3$ and $i_1\in J_1$ such that $\be_{j_3}>0$ and $\al_{i_1}>0$.
For $t_1,...,t_s\in J_1,\;t_1',...,t_s'\in J_3$ and $T(x^\al y^\be)\in M_{i-s}$,
\begin{eqnarray}\label{a4.50}
&&\Delta_{i_1,i_1'}^{j_1,j_1'}[(y_{t_1}y_{t_1'}-x_{t_1}x_{t_1'})\cdots(y_{t_s}y_{t_s'}-x_{t_s}x_{t_s'})T(x^\al y^\be)]
\nonumber\\&=&\Delta_{i_1,i_1'}^{j_1,j_1'}[E_{t_1',t_1}\cdots E_{t_s',t_s}(T(x^\al y^\be))]
\nonumber\\&\in& E_{t_1',t_1}\cdots E_{t_s',t_s}[\Delta_{i_1,i_1'}^{j_1,j_1'}(T(x^\al y^\be))]+U_{s+1}(\mfk g)(T(x^\al y^\be))
\nonumber\\&\subset& U_s(\mfk g)(M_{i-s+1})+U_{s+1}(\mfk g)(M_{i-s})\subset M_{i+1}.
\end{eqnarray}
By (\ref{a2.60}) and Proposition 2.1, we get
\begin{equation}\label{a4.51}
\Delta_{i_1,i_1'}^{j_1,j_1'}(M_i)\subset M_{i+1}.\end{equation}
Hence  $\Delta^{j_1,j_1'}_{i_1,i_l'} \in I_{(2)}$ for each $(j_1,i_1), (j_1',i_1')\in L_1$ (cf. (\ref{a2.51}) and (\ref{a4.13})).
Symmetrically, when there exists $i_1\in J_1$ such that $\al_{i_1}>0$, we also have $\Delta^{j_2,j_2'}_{i_2 ,i_2'}\in I_{(2)}$ for each $(j_2,i_2), (j_2',i_2')\in L_2$.
Note that there always exist $j_3\in J_3$ and $i_1\in J_1$ such that $\be_{j_3}>0$ and $\al_{i_1}>0$, when $\ell_1,\ell_2<0$.
Thus (\ref{a4.20}) holds.

Suppose $\ell_2>0$ and $\ell_1\leq 0$.  Take $\xi$ in (\ref{a4.23}) and $v_0$ in (\ref{a4.24}). Expressions (\ref{a4.25})-(\ref{a4.33}) still hold.
For each $j_1\in J_2$, we set
$w_{j_1}= y_{j_1}^{  \ell_2 }x_{1}^{ -\ell_1 } \in M_0$.
Recall that $\xi(w_{j_1}) \in M_1$ (cf. (\ref{a4.23}) and (\ref{a4.12})). For $(j_1,i_1),(j_1',i_1')\in L_1$,
\begin{equation}\label{a4.52}
\mbox{the coefficient of}\; x_{j_1'} x_{i_1'} y_{i_1} y_{j_1}^{  \ell_2-1 }x_{1}^{ -\ell_1 }\;\mbox{in}\;\xi(w_{j_1})= \ell_2(a_{ j_1,i_1;j_1',i_1' }+a_{ j_1',i_1';j_1,i_1 })=0.\end{equation}
By (\ref{a4.23}), (\ref{a4.27})-(\ref{b4.29}) and (\ref{a4.52}),
\begin{equation}
\xi \in  \sum_{(j_2,i_2),(j_2',i_2')\in L_2;(j_2,i_2)\leq (j_2',i_2')}a_{j_2,i_2;j_2',i_2'}
(E_{j_2,i_2}E_{j_2',i_2'}-E_{j_2,i_2'}E_{j_2',i_2})+ U_1(\mfk g).
\end{equation}
Using the same arguments as in (\ref{a4.35})-(\ref{a4.51}), we can prove (\ref{a4.21}).

Let $v_i=T(x^\al y^\be)\in M_i$ with ${\mfk d}(T(x^\al y^\be))={\mfk d}(x^\al y^\be) =i$ (cf. (\ref{a2.50})).
When $\sum_{j\in J_3} \be_j>0$, we have shown that $\Delta_{i_1,i_1'}^{j_1,j_1'}(v_i) \in M_{i+1}$ for $(j_1,i_1),(j_1',i_1') \in L_1$.
Consider $\sum_{j_3\in J_3} \be_{j_3}=0$, we have $\sum_{j_2\in J_2} \be_{j_2} \leq \ell_2$. Then 
\begin{equation}
\left(\Delta^{j_1,j_1 '}_{i_1,i_l'} \right)^{\ell_2+1}(v_i)=(x_{i_1'}y_{i_1}-x_{i_1}y_{i_1'})^{\ell_2+1}(x_{j_1'}\ptl_{y_{j_1}}-x_{j_1}\ptl_{y_{j_1'}})^{\ell_2+1}(T(x^\al y^\be))=0.
\end{equation}
Hence $\left(\Delta^{j_1,j_1 '}_{i_1,i_l'} \right)^{\ell_2+1}(v_i) \in M_{i+2\ell_2+1}$. Using the same arguments as in (\ref{a4.50})-(\ref{a4.51}), we can prove (\ref{b4.19}).
Similarly, when $\ell_1>0$ and $\ell_2\leq 0$, (\ref{a4.22}) and (\ref{b4.21}) holds.

\end{proof}

Denote  
\begin{equation}
\ol{I}_{(2)}=\left\{ \left. \Delta^{j_1,j_1'}_{i_1,i_l'}  ,\Delta^{j_2,j_2'}_{i_2 ,i_2'} \right\vert (j_1,i_1), (j_1',i_1')\in L_1, (j_2,i_2), (j_2',i_2')\in L_2\right\}.
\end{equation}

\begin{lemma}\label{alem:4.3}
For each $p\geq 3$, $\ell_1\leq 0$ or $\ell_2<0$, we have
\begin{equation}\label{a4.54}
  I_{(p)}\equiv\left\langle \left. \Delta^{j_1,j_2,j_3}_{i_1,i_2,i_3 }  \right\vert  i_1,i_2,i_3\in J_1\cup J_2; j_1,j_2,j_3 \in J_2\cup J_3\right\rangle \quad(\mbox{mod}\:\la I_{(1)},\ol{I}_{(2)}\ra). \end{equation}
\end{lemma}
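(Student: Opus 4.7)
The approach is to prove both containments of (\ref{a4.54}) by combining the algebraic structure from Section 3 with the filtration analysis from Section 2. The key link is that the leading symbol of each $E_{j,i}$ for $(j,i)\in L$ acts as a purely multiplicative operator at top $\mfk d$-degree: $-x_{i_1}x_{j_1}$ for $(j_1,i_1)\in L_1$, $y_{i_2}y_{j_2}$ for $(j_2,i_2)\in L_2$, and $-\phi(z_{j_3,i_3})=-(x_{i_3}x_{j_3}-y_{i_3}y_{j_3})$ for $(j_3,i_3)\in L_3$, with $\phi$ as in Section 3. A $p$-fold product of such operators therefore corresponds, modulo derivative corrections that drop the $\mfk d$-degree, to multiplication by a monomial in $\phi(\msr C)$.

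For the containment $\supseteq$, I would first establish $\Delta^{j_1,j_2,j_3}_{i_1,i_2,i_3}\in I_{(3)}$ under the stated index restrictions. In the clean case where all $(j_a,i_b)\in L_3$, the six $E$-factors are commuting multiplication operators, so $\Delta^{j_1,j_2,j_3}_{i_1,i_2,i_3}$ equals $-\phi\bigl(\det(z_{j_a,i_b})_{a,b}\bigr)$ as a multiplication operator on $M$; this vanishes outright because $\det(z_{j_a,i_b})_{a,b}\in \msr R_3=\ker\phi$ by Lemma \ref{alem:3.3}. For the general mixed case with $i_a\in J_1\cup J_2$ and $j_a\in J_2\cup J_3$, the derivative parts of $E_{j,i}$ with $(j,i)\in L_1\cup L_2$ and the Cartan-type factors in $J_2\times J_2$ contribute only lower-$\mfk d$ corrections; modulo $\ol I_{(2)}$ the $L_1$- and $L_2$-pairs antisymmetrize, reducing the leading behavior to the $L_3$ analysis via $\phi$. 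Routine commutator bookkeeping, modelled on the computation (\ref{b4.36})--(\ref{a4.51}) from Lemma \ref{alem:4.2}, then gives $\Delta^{j_1,j_2,j_3}_{i_1,i_2,i_3}(M_k)\subset M_{k+2}$. Multiplying these 3-minors by arbitrary elements of $S_{(p-3)}(\mfk g)$ then yields elements of $I_{(p)}$ for every $p\geq 3$.

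For the reverse containment, I would take $\xi\in I_{(p)}$ and reduce modulo $\langle I_{(1)}\rangle$ to eliminate factors $E_{j,i}$ with $(j,i)\notin L$, then reduce modulo $\langle \ol I_{(2)}\rangle$ to put pairs of $L_1$- and $L_2$-indices into canonical antisymmetric form. By Lemma \ref{alem:4}, the condition $\xi(M_k)\subset M_{k+p-1}$ is equivalent to the top $\mfk d$-component of $\xi(v)$ vanishing for $v$ ranging over a generating family of $M_0$ analogous to (\ref{a4.24}). Identifying this top component with the $\phi$-image of the symbol polynomial of $\xi$ in $\msr C$, the vanishing forces that polynomial to lie in $\ker\phi=\msr R_3$; by Lemma \ref{alem:3.3} it is a combination of $3\times 3$ minors, which lifts back to express $\xi$ modulo $\langle I_{(1)},\ol I_{(2)}\rangle$ in the asserted form.

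The main obstacle will be handling mixed 3-minors with Cartan-type entries $(j_a,i_b)\in J_2\times J_2$: such factors have $\mfk d$-weight zero (by (\ref{a2.52})) yet still act nontrivially on $M_k$, and their commutators with the $L_3$-factors must be shown to fall into $\langle I_{(1)},\ol I_{(2)}\rangle$ rather than contribute genuinely new top-degree terms. A secondary subtlety is the role of the hypothesis $\ell_1\leq 0$ or $\ell_2<0$: it guarantees $\ol I_{(2)}$ contains both the $L_1$- and $L_2$-alternating generators (cf. (\ref{a4.20})--(\ref{a4.22})), which is precisely what the antisymmetrization step in the reduction requires.
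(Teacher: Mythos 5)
Your overall architecture is the paper's: reduce the symbol of $\xi$ to an image under $\phi$, invoke $\ker\phi=\msr R_3$ (Lemma~\ref{alem:3.3}), lift back to $S(\mfk g)$ modulo $\la I_{(1)},\ol I_{(2)}\ra$ using the $\msr R_2$-description from Lemma~\ref{alem:2}, and finish with a case analysis verifying $\Delta^{j_1,j_2,j_3}_{i_1,i_2,i_3}\in I_{(3)}$. But there is a genuine gap in the reverse containment. You assert that, by Lemma~\ref{alem:4}, ``$\xi(M_k)\subset M_{k+p-1}$ is equivalent to the top $\mfk d$-component of $\xi(v)$ vanishing.'' That is not what Lemma~\ref{alem:4} says, and it is not true. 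Lemma~\ref{alem:4} only gives the inequality $\mfk d(f)\leq k+\mbox{ord}_k(f)$ and the equality criterion $f\notin TN_{k-1}$; it does not force the top $\mfk d$-degree piece of $\xi(v_0)$ to vanish. Concretely, if $\xi$ is a pure $L_1$-product $\prod E_{j_1,i_1}$ then the top $\mfk d$-component of $\xi(v_0)$ is $\pm\prod x_{i_1}x_{j_1}v_0$, of $\mfk d$-degree $p$, which is comfortably within the $\mfk d$-range $\leq 2(p-1)$ allowed for elements of $M_{p-1}=V_{p-1}$ (namely $TN(p-2)P$ also has degree $p$). So membership in $M_{p-1}$ alone does not kill the top component.

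What the paper actually does, and what your proposal omits, is to split the top-degree sum into sectors $\ol h_\iota$ by the number $\jmath_\iota$ of $L_3$-factors, and then introduce the auxiliary function $\mfk r(f,\iota')=\min\{\deg_{x_{J_3}}(f_i)+\deg_{y_{J_1}}(f_i)\mid \mfk d(f_i)=\iota'\}$. Each $\ol h_\iota$ sits in $\mfk d$-degree $p+\jmath_\iota$ with $\mfk r=\jmath_\iota$, whereas anything in $V_{p-1}$ at $\mfk d$-degree $p+\jmath_\iota$ must come from $TN(p-2-\jmath_\iota)P^{\jmath_\iota+1}$ and hence have $\mfk r\geq\jmath_\iota+1$. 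The contradiction $\jmath_\iota\geq\jmath_\iota+1$ is what forces $\ol h_\iota=0$. Without this $\mfk r$-argument (or an equivalent device distinguishing genuine $P^r$-products from the multiplicative symbols of $\xi$), the step ``identify this top component with the $\phi$-image of the symbol polynomial and force it into $\ker\phi$'' does not go through. You should replace the appeal to Lemma~\ref{alem:4} with this finer degree bookkeeping.

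Two smaller points. First, your ``main obstacle'' about Cartan-type entries $(j_a,i_b)\in J_2\times J_2$ is actually among the easy cases: in the paper's case analysis they fall directly into $\la I_{(1)}\ra$; the delicate mixed cases are those with one or two $J_2$-indices alongside $J_1$- or $J_3$-indices, which need the commutator arguments of (\ref{a4.88})--(\ref{a4.97}). Second, your explanation of the hypothesis $\ell_1\leq 0$ or $\ell_2<0$ is slightly off: $\ol I_{(2)}$ is by definition the full set of $L_1$- and $L_2$-minors regardless of signs, and the lemma is stated modulo $\la I_{(1)},\ol I_{(2)}\ra$. The hypothesis is used when assembling the full annihilator (Lemma~\ref{alem:4.2} tells you which pieces of $\ol I_{(2)}$ are genuinely in $I_{(2)}$), not to make $\ol I_{(2)}$ available for the reduction.
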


\begin{proof}

Suppose $p \geq 3$, we set
\begin{align}
\xi =  \sum_{l=1}^m c_l\left(
\prod_{s=1}^{k_{1,l}} E_{j_{1,l,s},i_{1,l,s}}
\prod_{t=1}^{k_{2,l}} E_{j_{2,l,t},i_{2,l,t}}
\prod_{r=1}^{k_{3,l}}   E_{j_{3,l,r},i_{3,l,r}}
\right)
 \in   \mfk k_{(p)}
\end{align}
(cf. (\ref{a4.12})), where $k_{1,l},k_{2,l},k_{3,l}\in\mbb N$ satisfying $k_{1,l}+k_{2,l}+k_{3,l}=p$, and
\begin{equation}(j_{1,l,s},i_{1,l,s})\in L_1,\ (j_{2,l,t},i_{2,l,t})\in L_2,\ (j_{3,l,r},i_{3,l,r})\in L_3,
 \end{equation}\begin{equation}c_l\in \mathbb{F} \quad \ \
 \text{for each } l \in \ol{1,m},  \  s\in \ol{1,k_{1,l}},\ t\in \ol{1,k_{2,l}},\ r\in \ol{1,k_{3,l}}.
\end{equation}

For any $v_0\in M_0$, we consider the partial sum $h=\sum_{l=1}^m   h_l$ of highest degree in $\xi(v_0)$
, where
\begin{align}\label{a4.58}
h_l= (-1)^{k_{1,l}}c_l\left(
\prod_{s=1}^{k_{1,l}} x_{j_{1,l,s}}x_{i_{1,l,s}}
\prod_{t=1}^{k_{2,l}} y_{j_{2,l,t}}y_{i_{2,l,t}}
\prod_{r=1}^{k_{3,l}} ( y_{j_{3,l,r}}y_{i_{3,l,r}} -x_{j_{3,l,r}}x_{i_{3,l,r}})
\right)v_0.
\end{align}
We will show that $h=0$ can be derived from $\xi (v_0)\in M_{p-1}$.

Suppose that the set of distinct elements of $\{k_{3,l} \vert l \in \ol{1,m}\}$ is $\{\jmath_1,\dots,\jmath_N\}$, and $\jmath_1<\jmath_2<\cdots<\jmath_N$. For $\iota\in \ol{1,N}$, we set
\begin{equation}
\ol{h}_\iota =\sum_{l \in \ol{1,m}:k_{3,l}=\jmath_\iota}   h_l.
\end{equation}

Suppose that $\ol{h}_\iota \neq 0$ for some $\iota \in \ol{1,N}$. According to (\ref{a2.48}):
\begin{equation}\label{b4.57}
{\mfk d} \left( \xi(v_0) \right) \geq {\mfk d}\left( \ol{h}_\iota  \right) = p+ \jmath_\iota .
\end{equation}
Denote the degree in $\{x_{i_3} \mid   i_3 \in J_3  \} $ as $\deg_{x_{_{J_3}}} \left( \xi(v_0)\right)$ and the degree in $\{y_{i_1} \mid   i_1 \in J_1  \} $ as $\deg_{y_{_{J_1}}} \left( \xi(v_0)\right)$.
For $\iota' \in \mathbb{N}$ and $f=\sum_{i=1}^{m'} f_i\in \msr A$, where $f_i$ are distinct monomials in $\msr A$, we define
\begin{equation}
{\mfk r} (f,\iota')=\min \{ \deg_{x_{_{J_3}}}(f_i) + \deg_{y_{_{J_1}}} (f_i) \mid \mfk d (f_i)=\iota', i\in \ol{1,m'} \}.
\end{equation}
Then we have
\begin{equation}\label{b4.59}
{\mfk r} (\xi(v_0),p+ \jmath_\iota) \leq {\mfk r}\left( \ol{h}_\iota , \mfk d (\ol{h}_\iota) \right)=\jmath_\iota.
\end{equation}

Since $\xi (v_0) \in M_{p-1}$ and $\xi (v_0) \notin TN_{p-1}$, according to Lemma \ref{alem:4}, we have
\begin{equation}
{\mfk d} \left( \xi (v_0) \right)= \mbox{ord}_{p-1}\left(\xi (v_0)\right) + p-1
\Rightarrow \mbox{ord}_{ p-1}\left( \xi (v_0) \right)  \geq 1+ \jmath_\iota.
\end{equation}
Recall (\ref{b4.6}) and (\ref{a2.1}). For $r= \mbox{ord}_{ p-1}\left( \xi (v_0) \right)  \geq 1+ \jmath_\iota $ in this case, we can write
\begin{align}
\xi (v_0)=\sum_{s_r=1}^{\ell_r} g_{r,s_r} f_{r,s_r} +  \sum_{s_{r-1}=1}^{\ell_{r-1}} g_{r-1,s_{r-1}} f_{r-1,s_{r-1}} + \cdots +f_{0},
\end{align}
where $ g_{i,s_i}  \in P^{\left(i \right)}$, $f_{i,s_i}  \in TN(p-1-i)$ for $  s_i\in\ol{1,\ell_i} $, and $f_{0} \in TN_{p-1}$.
Note that for $  s_i\in\ol{1,\ell_i} $ and $i\in \ol{1,r}$,
\begin{equation}
{\mfk d} ( g_{i,s_i} f_{i,s_i} ) = p-1-i+2i=p-1+i
\end{equation}
by (\ref{a2.59}).
Since $\ol{h}_\iota\neq 0$ and ${\mfk d} (\ol{h}_\iota)=p+\jmath_\iota$, we have
\begin{equation}  \sum_{s_{\jmath_\iota+1}=1}^{\ell_{\jmath_\iota+1}} g_{\jmath_\iota+1,s_{\jmath_\iota+1}} f_{\jmath_\iota+1,s_{\jmath_\iota+1}} \neq 0.\end{equation}
Therefore,
\begin{equation}
{\mfk r} (\xi(v_0),p+\jmath_\iota) = {\mfk r}\left( \sum_{s_{\jmath_\iota+1}=1}^{\ell_{\jmath_\iota+1}} g_{\jmath_\iota+1,s_{\jmath_\iota+1}} f_{\jmath_\iota+1,s_{\jmath_\iota+1}} , p+ \jmath_\iota \right) \geq \jmath_\iota+1,
\end{equation}
which contradicts (\ref{b4.59}).
Hence, $\ol{h}_\iota=0$ for $w\in \ol{1,N}$.
Therefore $h=0$.

By considering different monomials in $X_{J_2}$ and $Y_{J_2}$, we may assume that $k_{1,l},k_{2,l} ,k_{3,l}$ in (\ref{a4.58}) are independent  of $l \in \ol{1,m}$; that is, $k_{1,l}=k_1,k_{2,l}=k_2 ,k_{3,l}=k_3$ for $l \in \ol{1,m}$. Thus (\ref{a4.58}) yields
\begin{align}
0=\sum_{l=1}^m c_l\left(
\prod_{s=1}^{k_{1}}x_{i_{1,l,s}}
\prod_{t=1}^{k_{2}} y_{j_{2,l,t}}
\prod_{r=1}^{k_{3}} ( y_{j_{3,l,r}}y_{i_{3,l,r}} -x_{j_{3,l,r}}x_{i_{3,l,r}})
\right)=h'.
\end{align}
According to (\ref{a3.17}),
\begin{equation}\phi\left( \sum_{l=1}^m c_l\left(
\prod_{s=1}^{k_{1 }}  z_{n+1,i_{1,l,s}}
\prod_{t=1}^{k_{2 }} z_{j_{2,l,t},0}
\prod_{r=1}^{k_{3 }}  z_{j_{3,l,r},i_{3,l,r}}
\right)
\right)=h'=0.
\end{equation}

By Lemma \ref{alem:3.3}, we have
\begin{equation}  \sum_{l=1}^m c_l\left(
\prod_{s=1}^{k_{1 }}  z_{n+1,i_{1,l,s}}
\prod_{t=1}^{k_{2 }} z_{j_{2,l,t},0}
\prod_{r=1}^{k_{3 }}  z_{j_{3,l,r},i_{3,l,r}}
\right)   \in \msr R_3.
\end{equation}

Let $i_1,i_2,i_3\in J_1'$ and $j_1,j_2,j_3\in J_3'$ (cf. (\ref{a3.18})) such that $i_1<i_2<i_3$ and $j_1<j_2<j_3$.
Denote
\begin{eqnarray}
\Lambda^{j_1,j_2,j_3}_{i_1,i_2,i_3}=
&z_{j_1,i_1}z_{j_2,i_2}z_{j_3,i_3}+z_{j_1,i_2}z_{j_2,i_3}z_{j_3,i_1}+z_{j_1,i_3}z_{j_2,i_1}z_{j_3,i_2}\nonumber\\
&-z_{j_1,i_1}z_{j_2,i_3}z_{j_3,i_2}-z_{j_1,i_2}z_{j_2,i_1}z_{j_3,i_3}
 -z_{j_1,i_3}z_{j_2,i_2}z_{j_3,i_1}
\end{eqnarray}
Then we can write
\begin{equation}h^\prime=\phi\left( \sum_{s=1}^{m'} \Lambda^{j_{1,s},j_{2,s},j_{3,s}}_{i_{1,s},i_{2,s},i_{3,s}} g_{s}\right) , \end{equation}
where $g_s \in \msr C_{p-3}$, $j_{1,s},j_{2,s},j_{3,s} \in J_3'$ and $i_{1,s},i_{2,s},i_{3,s}\in J_1'$.

Consider the correspondence between the terms in $h'$ and the terms in $\xi$.
When $j_{1,s},j_{2,s},j_{3,s} \in J_3 $ and $i_{1,s},i_{2,s},i_{3,s}\in J_1 $,
$ \phi( \Lambda^{j_{1,s},j_{2,s},j_{3,s}}_{i_{1,s},i_{2,s},i_{3,s}} )$ corresponds to $\Delta^{j_{1,s},j_{2,s},j_{3,s}}_{i_{1,s},i_{2,s},i_{3,s}} $.
Note $x_{j_{1,l,t}}x_{j_{1,l,t'}}=x_{j_{1,l,t'}}x_{j_{1,l,t}}$, but
$E_{j_{1,l,t},i_{1,l,t}}E_{j_{1,l,t'},i_{1,l,t'}}$ may not be equal to $E_{j_{1,l,t'},i_{1,l,t}}E_{j_{1,l,t},i_{1,l,t'}}.$
So $\prod_{t=1}^{k_{1,l}} z_{n +1,j_{1,l,t} } $ does not only correspond to $\prod_{t=1}^{k_{1,l}} E_{j_{1,l,t},i_{1,l,t} } $, but also corresponds to $\prod_{t=1}^{k_{1,l'}} E_{j_{1,l',t},i_{1,l,t} } $ with
\begin{equation}\{ j_{1,l,t} \mid t\in \ol{1,k_{1 }} \}=\{ j_{1,l',t} \mid t\in \ol{1,k_{1 }} \}=\msr I_1^x \left( \prod_{t=1}^{k_{1 }}x_{ j_{1,l,t} } \right).\end{equation}(cf.(\ref{a3.22})).
According to Lemma \ref{alem:2}, $\prod_{t=1}^{k_{1 }} z_{n +1,j_{1,l,t} } $ corresponds to
\begin{equation}\label{a4.74}
\prod_{t=1}^{k_{1 }} E_{j_{1,l,t},i_{1,l,t} }+ \left\langle \Delta^{j_1,j_1'}_{i_1,i_1'}
\mid (j_1,i_1), (j_1',i_1')\in L_1
\right\rangle\quad\mbox{in}\;S(\mfk g). \end{equation}
Similarly, $\prod_{t=1}^{k_{2 }} z_{j_{2,l,t},0} $ corresponds to
\begin{equation}\label{a4.75}
\prod_{t=1}^{k_{2 }} E_{j_{2,l,t},i_{2,l,t} }+ \left\langle \Delta^{j_2,j_2'}_{i_2,i_2'}
\mid (j_2,i_2), (j_2',i_2')\in L_2
\right\rangle\quad\mbox{in}\;S(\mfk g). \end{equation}

Fix $y_{i'}$ in $\prod_{t=1}^{k_2}y_{i_{2,l,t}}$ (cf. (\ref{a4.58})). Then
$\Lambda_{0,i_{2,s},i_{3,s}}^{j_{1,s},j_{2,s},j_{3,s}} y_{i'}$ corresponds to $\Delta_{i',i_{2,s},i_{3,s}}^{j_{1,s},j_{2,s},j_{3,s}}$ in the sense as (\ref{a4.75}) when $i_{2,s},i_{3,s}\in  J_1$ and $j_{1,s},j_{2,s},j_{3,s} \in J_3 $.
Fix $x_{j'}$ in $\prod_{s=1}^{k_{1,l}}x_{j_{1,l,s}}$ (cf. (\ref{a4.58})).
Similarly, $\Lambda_{i_{1,s},i_{2,s},i_{3,s}}^{j_{1,s},j_{2,s},n+1} x_{j'}$ corresponds to $\Delta_{i_{1,s},i_{2,s},i_{3,s}}^{j_{1,s},j_{2,s},j'} $ in the sense as (\ref{a4.74}) when $i_{1,s},i_{2,s},i_{3,s}\in  J_1$ and $j_{1,s},j_{2,s}  \in J_3 $.

When $j_1,i_3\in J_2$, $j_2,j_3\in J_3$ and $i_1,i_2\in J_1$, we recall (\ref{a4.14}) and have
\begin{eqnarray}\label{a4.76}
 \Delta^{j_1,j_2,j_3}_{i_1,i_2,i_3 }
&\equiv&  E_{j_1,i_1} E_{j_2,i_2} E_{j_3,i_3}+E_{j_1,i_2} E_{j_2,i_3} E_{j_3,i_1}
 \nonumber\\& &-E_{j_1,i_2} E_{j_2,i_1} E_{j_3,i_3} -E_{j_1,i_1} E_{j_2,i_3} E_{j_3,i_2}\quad(\mbox{mod}\;\la I_{(1)}\ra)
\end{eqnarray}
as elements in $S(\mfk g)$. Then $\Lambda^{j_{1,s},j_{2,s},n+1}_{0,i_{2,s},i_{3,s}} x_{i'}y_{j'}$ corresponds to $ \Delta^{j_{1,s},j_{2,s},j'}_{i',i_{2,s},i_{3,s}}$ in the sense as (\ref{a4.76}) when $ j_{2,s},j_{3,s}\in  J_3$ and $i_{1,s},i_{2,s}  \in J_1 $.
Hence
\begin{eqnarray}\label{a4.77}
\xi \in&\la \Delta^{j_{1},j_{2},j_{3}}_{i_{1},i_{2},i_{3}}\mid j_1,j_2,j_3\in J_2\cup J_3,\;i_{1},i_{2},i_{3} \in J_1\cup J_2\ra
+\la I_{(1)},\ol{I}_{(2)}\ra 
 \end{eqnarray}

Next we want to verify that the above $\Delta^{j_{1 },j_{2 },j_{3 }}_{i_{1 },i_{2 },i_{3 }}\in I_{(3)} $ for $j_1<j_2<j_3$ and $i_1<i_2<i_3$, which can always be obtained by re-indexing up to a sign. We divide it into six cases.
\pse

(1) {\it $j_1 \in J_2$ and $i_3 \in J_2$ do not occur at the same time, $j_1,j_2,j_3 \in J_2 \cup J_3$, $i_1,i_2,i_3 \in J_1 \cup J_2$.}\pse

In this case, we have
\begin{align}\label{b4.75}
 \Delta^{j_1,j_2,j_3}_{i_1,i_2,i_3}|_{\msr A}   =0.
\end{align}
For example, for $i_1,i_2  \in J_1$, $i_3 \in J_2$, $j_1,j_2,j_3 \in J_3$,
\begin{eqnarray}
 E_{j_1,i_1} E_{j_2,i_2} E_{j_3,i_3} |_{\msr A}  = (y_{j_1}y_{i_1}-x_{j_1}x_{i_1}) (y_{j_2}y_{i_2}-x_{j_2}x_{i_2})( x_{j_3}\ptl_{x_{i_3}} +y_{j_3}y_{i_3}  )
\end{eqnarray}

Define an isomorphism $\nu$ from $\mathbb{F} [X_{J_1\cup J_3},Y_{\ol{1,n}}, \partial_{x_{n_1+1}},...,\partial_{x_{n_2}}]$ to $\msr A$ by
\begin{equation}\nu(\partial_{x_{i_3}})= -x_{i_3},\;\  \nu(x_{j' })=x_{j' },\;\  \nu(y_{j } )=y_{j }\end{equation} for  $i_3\in J_2$, $j\in \ol{1,n},j' \in J_1\cup J_3$.
Then
\begin{eqnarray}
& &\nu \left( (y_{j_1}y_{i_1}-x_{j_1}x_{i_1}) (y_{j_2}y_{i_2}-x_{j_2}x_{i_2})( x_{j_3}\ptl_{x_{i_3}} +y_{j_3}y_{i_3}  )\right) \\
&=& (y_{j_1}y_{i_1}-x_{j_1}x_{i_1}) (y_{j_2}y_{i_2}-x_{j_2}x_{i_2})( y_{j_3}y_{i_3}  -x_{j_3} x_{i_3}
)
\end{eqnarray}
We have already verified  $ \nu (\Delta^{j_1,j_2,j_3}_{i_1,i_2,i_3}|_{\msr A} )=0$ in the proof of Lemma \ref{alem:3.3}. So (\ref{b4.75}) holds.
\pse

(2) {\it $j_1,i_3\in J_2$, $j_2,j_3\in J_3$ and $i_1,i_2\in J_1$.}\pse

Let $v_i=T(x^\al y^\be)\in M_i$ with ${\mfk d}(T(x^\al y^\be))={\mfk d}(x^\al y^\be) =i$. Then
\begin{eqnarray}
& &\Delta^{j_1,j_2,j_3}_{i_1,i_2,i_3} (v_i) \nonumber\\
&\in &   \left( E_{j_1,i_1} E_{j_2,i_2} E_{j_3,i_3}+E_{j_1,i_2} E_{j_2,i_3} E_{j_3,i_1}
 -E_{j_1,i_2} E_{j_2,i_1} E_{j_3,i_3} -E_{j_1,i_1} E_{j_2,i_3} E_{j_3,i_2} \right)(v_i )+M_{i+2} \nonumber\\
 &\subset& (x_{i_1}y_{i_2}-x_{i_2}y_{i_1})(x_{j_2}y_{j_3}-x_{j_3}y_{j_2})(x_{j_1}\ptl_{x_{i_3}}-y_{i_3}\ptl_{y_{j_1}})(v_i  )+M_{i+2} \nonumber\\
 &\subset&\left( E_{j_3,i_1}E_{j_2,i_2} -E_{j_3,i_2}E_{j_2,i_1}\right) (x_{j_1}\ptl_{x_{i_3}}-y_{i_3}\ptl_{y_{j_1}})(v_i  )+M_{i+2}
\end{eqnarray}
According to (\ref{a1.7}), we have
\begin{equation}
[\td{\Delta},  x_{j_1}\ptl_{x_{i_3}}-y_{i_3}\ptl_{y_{j_1}} ]=0 \end{equation}
So
\begin{equation}
\td{\Delta}\left( (x_{j_1}\ptl_{x_{i_3}}-y_{i_3}\ptl_{y_{j_1}})(v_i)\right)=0; \end{equation}
that is
\begin{equation}
 \left( (x_{j_1}\ptl_{x_{i_3}}-y_{i_3}\ptl_{y_{j_1}})(v_i)\right) \in M.\end{equation}

Furthermore,
\begin{equation}
 \mfk d \left( (x_{j_1}\ptl_{x_{i_3}}-y_{i_3}\ptl_{y_{j_1}})(v_i)\right)\leq i \end{equation}
Hence
\begin{equation}
  (x_{j_1}\ptl_{x_{i_3}}-y_{i_3}\ptl_{y_{j_1}})(v_i) \in M_i.\end{equation}
Therefore,
\begin{eqnarray}
& &   \Delta^{j_1,j_2,j_3}_{i_1,i_2,i_3}(v_i) \nonumber\\
 &\in &\left( E_{j_3,i_1}E_{j_2,i_2} -E_{j_3,i_2}E_{j_2,i_1}\right) \left( (x_{j_1}\ptl_{x_{i_3}}-y_{i_3}\ptl_{y_{j_1}})(v_i)\right)+M_{i+2}\subset M_{i+2}.
\end{eqnarray}
Moreover, we can prove as (\ref{a4.50}) that
\begin{equation} \Delta^{j_1,j_2,j_3}_{i_1,i_2,i_3} ( M_i)\in M_{i+2} \end{equation}
for each $i\in \mathbb{N}$, thus
\begin{equation}\Delta^{j_1,j_2,j_3}_{i_1,i_2,i_3 } \in I_{(3)}.\end{equation}

\pse

(3) {\it $j_1,j_2,i_3\in J_2$, $ j_3\in J_3$ and $i_1 ,i_2\in J_1$.}
\pse

Recall (\ref{a4.14}) and $v_i=T(x^\al y^\be)\in M_i$ with ${\mfk d}(T(x^\al y^\be))={\mfk d}(x^\al y^\be) =i$. In this case,
\begin{equation}\label{a4.88}
\Delta^{j_{1 },j_{2 },j_{3 }}_{i_{1},i_{2 },i_{3 }}(v_i)\equiv
\Delta^{j_1,j_2}_{i_1,i_2}  E_{j_3,i_3} (v_i) \quad(\mbox{mod}\;\la I_{(1)}\ra)
.\end{equation}
Then
\begin{eqnarray}
& & \Delta^{j_1,j_2}_{i_1,i_2}  E_{j_3,i_3} (v_i) \nonumber\\
 &=&( x_{i_2}y_{i_1}-x_{i_1}y_{i_2} ) ( x_{j_2}\ptl_{y_{j_1}}-x_{j_1}\ptl{y_{j_2}} )  (x_{j_3}\ptl_{x_{i_3}}+y_{j_3} y_{i_3})(v_i)  \nonumber\\
 &=& ( x_{i_2}x_{j_3}y_{i_1}-x_{i_1}x_{j_3}y_{i_2} ) ( x_{j_2}\ptl_{y_{j_1}}-x_{j_1}\ptl{y_{j_2}} )  \ptl_{x_{i_3}} (v_i)  +\nonumber\\
& &( x_{i_2}y_{i_1}  y_{j_3}-x_{i_1}y_{i_2}   y_{j_3}) ( x_{j_2}\ptl_{y_{j_1}}-x_{j_1}\ptl{y_{j_2}} )   y_{i_3} (v_i)  \nonumber\\
 &=&(-E_{j_3,i_2}y_{i_1}+E_{j_3,i_1}y_{i_2} )  ( x_{j_2}\ptl_{y_{j_1}}-x_{j_1}\ptl{y_{j_2}} )  \ptl_{x_{i_3}} (v_i) +\nonumber\\
& &( E_{j_3,i_1}x_{i_2}-E_{j_3,i_2}x_{i_1}) ( x_{j_2}\ptl_{y_{j_1}}-x_{j_1}\ptl{y_{j_2}} )   y_{i_3} (v_i)  \nonumber\\
 &=&  E_{j_3,i_1}(y_{i_2}\ptl_{x_{i_3}} +x_{i_2}\ptl_{y_{i_3}})(x_{j_2}\ptl_{y_{j_1}}-x_{j_1}\ptl_{y_{j_2}})(v_i)  \nonumber\\
& & -E_{j_3,i_2} (y_{i_1}\ptl_{x_{i_3}} +x_{i_1}\ptl_{y_{i_3}}) ( x_{j_2}\ptl_{y_{j_1}}-x_{j_1}\ptl{y_{j_2}} ) (v_i)
\end{eqnarray}
According to (\ref{a1.7}),
\begin{equation} [ \td{\Dlt} ,x_{j_2}\ptl_{y_{j_1}}-x_{j_1}\ptl{y_{j_2}}]=0,\end{equation}
and
\begin{equation} [ \td{\Dlt} ,y_{i_1}\ptl_{x_{i_3}} +x_{i_1}\ptl_{y_{i_3}}]=0,\end{equation}
Thus
\begin{equation}
\td{\Delta}\left( (y_{i_2}\ptl_{x_{i_3}} +x_{i_2}\ptl_{y_{i_3}})(x_{j_2}\ptl_{y_{j_1}}-x_{j_1}\ptl_{y_{j_2}})(v_i)\right)=0 \end{equation}
Since
\begin{equation}\label{a4.97}
 \mfk d \left((y_{i_2}\ptl_{x_{i_3}} +x_{i_2}\ptl_{y_{i_3}})(x_{j_2}\ptl_{y_{j_1}}-x_{j_1}\ptl_{y_{j_2}})(v_i)\right)\leq i+1,\end{equation}
we have \begin{equation}
 (y_{i_2}\ptl_{x_{i_3}} +x_{i_2}\ptl_{y_{i_3}})(x_{j_2}\ptl_{y_{j_1}}-x_{j_1}\ptl_{y_{j_2}})(v_i) \in M_{i+1}.\end{equation}
Therefore,
\begin{equation}
\Delta^{j_1,j_2,j_3}_{i_1,i_2,i_3 } (v_i) \in  \Delta^{j_1,j_2}_{i_1,i_2}  E_{j_3,i_3} (v_i) +M_{i+2}
\subset M_{i+2}.\end{equation}
Furthermore, we can prove as (\ref{a4.50}) that
\begin{equation}
\Delta^{j_1,j_2,j_3}_{i_1,i_2,i_3 } (M_i) \in M_{i+2}\end{equation}
for $i\in \mathbb{N}$.
Then
\begin{equation}\label{a4.101}\Delta^{j_1,j_2,j_3}_{i_1,i_2,i_3 } \in I_{(3)}.\end{equation}
\pse

(4) {\it  $j_1,i_2,i_3\in J_2$, $j_2,j_3\in J_3$ and $i_1 \in J_1$.}
\pse

It is symmetrical to the case (3), and so (\ref{a4.101}) still holds.

\pse

(5) {\it  $j_1,j_2\in J_2,\;j_3\in J_2\cup J_3$ and $i_1\in J_1\cup J_2,\; i_2,i_3 \in J_2$.}
\pse

According to (\ref{a4.14}) and (\ref{a4.19}),
\begin{equation}\Delta^{j_1,j_2,j_3}_{i_1,i_2,i_3 }
\in\la  I_{(1)} \ra  .\end{equation}
Naturally,
\begin{equation}\Delta^{j_1,j_2,j_3}_{i_1,i_2,i_3 }
\in\  I_{(3)}  .\end{equation}
\pse

(6) {\it $i_1,i_2,i_3\in J_2$ and $j_1,j_2,j_3\in J_2\cup J_3$, or $i_1,i_2,i_3\in J_1\cup J_2$ and $j_1,j_2,j_3\in J_2$.}
\pse

When $i_1,i_2,i_3\in J_2$,
\begin{equation}\Delta^{j_1,j_2,j_3}_{i_1,i_2,i_3 }
=E_{j_3,i_1}\Delta^{ j_1,j_2}_{ i_2,i_3 }-E_{j_3,i_2}\Delta^{ j_1,j_2}_{ i_1,i_3 }
+E_{j_3,i_3}\Delta^{ j_1,j_2}_{ i_1,i_2 }
\in\  \la I_{(1)}\ra  .\end{equation}
Then \begin{equation}\Delta^{j_1,j_2,j_3}_{i_1,i_2,i_3 }
\in\  I_{(3)}  .\end{equation}
The case of $j_1,j_2,j_3\in J_2$ is similar.
\pse

In summary, \begin{equation}\label{a4.106}
\Delta^{j_1,j_2,j_3}_{i_1,i_2,i_3 }\in I_{(3)}\qquad\for\;\;i_1,i_2,i_3 \in J_1\cup J_2,\;j_1,j_2,j_3 \in J_2\cup J_3.  \end{equation} According to (\ref{a4.77}), we have proved that (\ref{a4.54}) holds.
This completes the proof of the lemma.
\end{proof}
\psp

In order to calculate the associated variety of the irreducible module ${\msr H}_{\la\ell_1,\ell_2\ra}$ of all the cases in Proposition 1, we consider
  $\ell_1,\ell_2>0$ and $n_1<n_2=n$. First we take
\begin{equation}
M_0 
 = \text{Span} \left\{  TN\begin{pmatrix}
0 & \ell_1 & 0 \\
k_{21} & k_{22} & 0
\end{pmatrix}  
\Big\vert
k_{21}+k_{22}=\ell_2
\right\}.
\end{equation}
In particular, when $n_1+1=n_2=n$, 
\begin{eqnarray}\label{a4.105}
M_0 = \text{Span} \left\{    \left. T(x_{n}^{\ell_1} y^\beta)      \right\vert  x_{n}^{\ell_1} y^\beta \in \msr A,
 \sum_{s=1}^{n-1} \beta_s =\ell_2 ,\beta_{n}=0
\right\}  .
\end{eqnarray}
Set
\begin{eqnarray}\label{b4.110}
V_k'
&=&\text{Span} \left\{  \left.  T(x^\alpha y^\beta)      \right\vert
x^\alpha y^\beta\in \msr A_{\la\ell_1,\ell_2\ra}  ,\sum_{t=1}^{n_1} \alpha_t  \leq k, \sum_{s=1}^{n} \beta_s =\ell_2,\al_{n_1+1}\be_{n_1+1}=0
\right\} \nonumber\\
&=&\text{Span} \left\{
TN\begin{pmatrix}
t & \ell_1+t & 0 \\
k_{21} & k_{22} & 0
\end{pmatrix} \Big\vert 0\leq t \leq k,k_{21}+k_{22}=\ell_2
\right\}.
\end{eqnarray}
In particular, when $n_1+1=n_2=n$, 
\begin{align}\label{a4.106}
V_k' =\text{Span} \left\{  \left.  T(x^\alpha y^\beta)      \right\vert
x^\alpha y^\beta\in \msr A_{\la\ell_1,\ell_2\ra}  ,\sum_{t=1}^{n-1} \alpha_t  \leq k, \sum_{s=1}^{n-1} \beta_s =\ell_2
\right\} 
\end{align}
Define ${\mfk d'} : {\msr A}_{\la\ell_1,\ell_2\ra}  \rightarrow \mathbb{N}$ as follows
\begin{align}
&{\mfk d'} \left(x^\alpha y^\beta \right) = \sum_{i \in J_1}\al_i ,\label{aa1} \\
&{\mfk d'} \left( \lmd f\right) ={\mfk d'} \left( f\right) , \  \forall \lmd   \in \mathbb{F} \setminus \{0\} , \\
&{\mfk d'}\left(\sum_{i=1}^m  f_i\right) =\max \{{\mfk d'}\left(f_1 \right) ,\cdots,{\mfk d'}\left(f_m \right) \},\text{where $f_i$ are monomials.}
\end{align}
Note that $\mfk d'$ is different from $\mfk d$ (cf. (\ref{a2.46})),  in order to ensure that $\mfk d'(M_0)=\{0\}$.

In this case, $L=L_1=J_2 \times J_1$ as (\ref{a2.51}).
Similar to (\ref{a2.48})-(\ref{a2.52}), the following properties of $\mfk d'$ still hold:
\begin{align}
{\mfk d'} \left( E_{i,j}(f) \right)  \leq
\begin{cases}
{\mfk d'} \left( f \right)+1  &\text{if}  \left( i,j\right)  \in     L ,  \\
{\mfk d'} \left( f \right) &\text{if}  \left( i,j\right)  \notin  L.
\end{cases}
\label{aa2}
\end{align}
For each $i \in \mathbb{N}$,
\begin{align}
{\mfk d'} \left(  \left(x_{n_1+1 } y_{n_1+1 }\right)^i
\left(\tilde{\Delta}+\partial_{x_{n_1+1 }}\partial_{ y_{n_1+1 }}\right)^i \left( x^\alpha y^\beta \right)  \right)
={\mfk d'} \left(x^\alpha y^\beta\right) \text{ or } 0,
\end{align}
where $\alpha_{n_1+1} \beta_{n_1+1 } =0$.
Then
\begin{align}
{\mfk d'} \left( T\left( x^\alpha y^\beta \right)  \right) ={\mfk d'} \left(x^\alpha y^\beta\right),\text{ where $ \alpha_{n_1+1} \beta_{n_1+1} =0$.}
\end{align}

\begin{lemma}For $k\in \mbb N$, $M_k=U_k(sl(n))(M_0)=V'_k$  holds.\end{lemma}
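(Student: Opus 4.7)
The plan is to mirror the proof of Proposition~\ref{ap2.1}, in particular Case~2, adapted to the present setting where $\ell_1,\ell_2>0$ and $n_1<n_2=n$. Note that $n_2=n$ forces $J_3=\emptyset$, so $L=L_1=J_2\times J_1$; the degree function $\mfk d'$ then increases by at most one under each $E_{i,j}\in L_1$ and is preserved otherwise, by (\ref{aa2}). I would establish $M_k=V'_k$ by two parallel inductions on $k$, with base case $M_0=V'_0$. For the inclusion $M_k\subseteq V'_k$, the inductive step reduces to showing $E_{i,j}(V'_{k-1})\subseteq V'_k$ for every $(i,j)$; the bounds on $\mfk d'$, together with the fact that $\td{\pi}(sl(n))$ preserves both the $\mbb Z^2$-grading on $\msr A_{\la\ell_1,\ell_2\ra}$ and the harmonic condition cutting out $\msr H_{\la\ell_1,\ell_2\ra}$, place the image in $V'_k$.

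For $V'_k\subseteq M_k$, I would show $T(x^\al y^\be)\in M_k$ whenever $\sum_{i\in J_1}\al_i=k$, since basis elements with smaller $J_1$-degree already lie in $V'_{k-1}\subseteq M_{k-1}$ by induction. Fix $i_1\in J_1$ with $\al_{i_1}\geq 1$. In the generic subcase, where some $j_2\in J_2\setminus\{n_1+1\}$ satisfies $\al_{j_2}\geq 1$, set $\al'=\al-\es_{i_1}-\es_{j_2}$, so that $T(x^{\al'}y^\be)\in V'_{k-1}\subseteq M_{k-1}$; since $j_2\ne n_1+1$, (\ref{a2.7}) applies and gives
\begin{equation*}
E_{j_2,i_1}\bigl(T(x^{\al'}y^\be)\bigr)=-T(x^\al y^\be)-\be_{j_2}T\bigl(x^{\al'}y^{\be+\es_{i_1}-\es_{j_2}}\bigr),
\end{equation*}
from which rearrangement places $T(x^\al y^\be)$ in $M_k$, after noting that the second right-hand term lies in $V'_{k-1}\subseteq M_{k-1}$.

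The main obstacle is the exceptional subcase where $\al_{n_1+1}=\ell_1+k$ and $\al_{j_2}=0$ for every $j_2\in J_2\setminus\{n_1+1\}$, which is forced whenever $n_1+1=n_2=n$; here no peel-off with $j_2\ne n_1+1$ is available and (\ref{a2.7}) no longer applies. My approach would exploit the observation $[\td\Dlt,x_{i_1}]=0$ for $i_1\in J_1$ (evident from (\ref{a1.7})), so multiplication by $x_{i_1}$ commutes with $T$. A direct computation of $E_{n_1+1,i_{1,1}}\cdots E_{n_1+1,i_{1,k}}\bigl(T(x_{n_1+1}^{\ell_1}y^\be)\bigr)\in M_k$, performed by expanding the commuting factors $-x_{i_{1,j}}x_{n_1+1}-y_{i_{1,j}}\ptl_{y_{n_1+1}}$ into a sum over subsets $S\subseteq\{1,\ldots,k\}$ in the spirit of Lemma~\ref{alem:1} and Corollary~2.2, should identify a nonzero scalar multiple of $T(x^\al y^\be)$ as the dominant ($S=\{1,\ldots,k\}$) term, with all remaining contributions lying in $M_{k-1}$ by the inductive hypothesis.
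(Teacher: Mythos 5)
Your treatment of the generic subcase (where some $j_2\in J_2\setminus\{n_1+1\}$ has $\alpha_{j_2}\geq 1$) is correct and matches the paper: peel off $E_{j_2,i_1}$, use (\ref{a2.7}) to pull $T$ through, and absorb both resulting terms into $M_k$.

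The exceptional subcase is where there is a genuine gap. Your plan is to expand
\begin{equation*}
E_{n_1+1,i_{1,1}}\cdots E_{n_1+1,i_{1,k}}\bigl(T(x_{n_1+1}^{\ell_1}y^\beta)\bigr)
\end{equation*}
over subsets $S\subseteq\{1,\ldots,k\}$, read off the $S=\{1,\ldots,k\}$ piece as a nonzero scalar multiple of $T(x^\alpha y^\beta)$, and dismiss the rest as landing in $M_{k-1}$. Neither claim survives scrutiny. First, the individual pieces in the $S$-expansion are not harmonic and are therefore not basis elements $T(\cdot)$ at all; the $S=\{1,\ldots,k\}$ piece $(-1)^k\bigl(\prod_j x_{i_{1,j}}\bigr)x_{n_1+1}^k\, T(x_{n_1+1}^{\ell_1}y^\beta)$ is not a scalar multiple of $T(x^\alpha y^\beta)$, because while $x_{i_1}$ does commute with $T$, $x_{n_1+1}$ does not. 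Second, and more seriously, once the full left-hand side is re-expressed in the $T$-basis (\ref{a2.8}), the ``remaining contributions'' are not lower order. Take $n_1=2$, $n_2=n=3$, $\ell_1=1$, $\ell_2=2$, $k=1$, $\beta=\epsilon_1+\epsilon_2$; a direct computation gives
\begin{equation*}
E_{3,1}\bigl(T(x_3y_1y_2)\bigr)=-\tfrac{3}{2}\,T(x_1x_3^2y_1y_2)-\tfrac{1}{2}\,T(x_2x_3^2y_1^2),
\end{equation*}
and $T(x_2x_3^2y_1^2)$ has $\mfk d'=1=k$, so it lies in $V'_1\setminus V'_0$, not in $M_{k-1}=M_0$. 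A single application of one string of operators therefore does not isolate $T(x^\alpha y^\beta)$. This is precisely the difficulty the paper resolves with a system of such identities (one per $s_t$), expressed via the matrix $\mathbf{A}=\frac{1}{k+\ell_1}\bigl(\beta_{s_j}+\delta_{ij}(k+\ell_1-1)\bigr)$ whose determinant $\frac{(k+\ell_1-1)^{m-1}}{(k+\ell_1)^m}(k_{21}+\ell_1+k-1)$ is shown to be nonzero; inverting that system is what places each $v_t$, and then $f$, in $M_k$. Without that linear-algebra step (or an equivalent argument) the exceptional case is not established, so your proof as written is incomplete.
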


\begin{proof}
We prove $M_i=V_i'$ for any $i\in\mbb N$ by induction.
According to (\ref{b4.110}), $M_0=V_0' $ holds. Suppose that $M_i=V_i'$ with $i \leq k - 1$. Consider $i=k$.
Since (\ref{aa2})  holds, we can prove that $M_k\subset V_k'$ as (\ref{a2.61})-(\ref{a2.66}) with $V_k$ replaced by $V_k'$.
Next we want to prove $V_k'\subset M_k$.
Let $f=T (x^\al y^\be) \in V_k'$ with $ x^\alpha y^\beta\in \msr A_{\la\ell_1,\ell_2\ra}$, $\sum_{t=1}^{n_1} \alpha_t  \leq k$, $\sum_{s=1}^{n} \beta_s =\ell_2$, $\al_{n_1+1}\be_{n_1+1}=0$.
When $n_1+1<n_2$, we suppose $ \sum_{n_1+2\leq i_2\leq n} \al_{i_2}>0$. Then there exists $i_1\in J_1$ such that $\al_{i_1}>0$.\
According to (\ref{a2.7}), we have 
\begin{equation}
f=  -E_{i_2,i_1}  (T(x^{\al-\es_{i_1}-\es_{i_2} } y^\be))                                                                                                                                                                                                                                                                                                                                                                                                                                                                                                                                                                                                                                                                                                                                                                                                                                                                                                                                                                                                                                                                                                                                                                                                                                                                                                                                                                                                                                                                                                                                                                                                                                                                                         - T(\ptl_{y_{i_2}}y^{\be+\es_{i_1}})\in M_{k}.
\end{equation}
On the other hand, when $\sum_{n_1+2\leq i_2\leq n} \al_{i_2} =0$, we write 
\begin{equation}
x^\alpha y^\beta= gh_1h_2 x_{n_1+1}^{\ell_1+k} 
\end{equation} with $g \in X^{k}_{J_1}$, $h_1 =\prod_{t=1}^{m}  y_{s_t}^{\beta_{s_t}}\in  Y_{J_1}^{k_{21}}$, $h_2 \in Y^{k_{22}}_{J_2\setminus \{n_1+1\}}$, $\beta_{s_t} \neq 0$ for each $t \in \ol{1,m}$.
In particular, when $n_1+1=n_2$, we have $h_2=1$ and $k_{21}=\ell_2$ by (\ref{a4.106}).
We write $g=x_{i_0}g'$ for some $i_0 \in J_1$, and denote $v_0'=g'x_{n}^{\ell_1+k} h_1h_2$.
Then $f=T(x_{i_0}v_0^\prime)$. For $t  \in\ol{1,m}$, we set
\begin{equation}\label{b4.120}
v_t=T\left(\frac{v_0'  x_{s_t} y_{i_0}}{y_{s_t}}\right) \in V'_k,
\end{equation}
\begin{equation}\label{a4.122}
w_t=T\left( \frac{v_0' y_{i_0}}{y_{s_t}x_{n_1+1}}\right) \in  V'_{k-1}=M_{k-1}.
\end{equation}

According to (\ref{a4.106}), we may assume that \begin{equation}
-E_{n_1+1,s_t} \left(w_t\right) =\sum_{l=1}^{m'}  T(\zeta_l),
\end{equation}
where $\zeta_l $ is a monomial in $\msr A$ for $j \in \ol{1,m'}$, $ T(\zeta_l) \in V'_k$ and $\mfk d' (\zeta_l )\leq k$. Since the monomials in $T(\zeta_l)$ all have the common factor $x_{n_1+1}y_{n_1+1}$ except for the first term $\zeta_l$. The monomials in $\sum_{l=1}^{m'}  T(\zeta_l)$ that do not have the common factor $x_{n_1+1}y_{n_1+1}$ exactly are  $\zeta_1,\cdots,\zeta_{m'}$.
By (\ref{a4.122}), we have
\begin{equation}
w_t \equiv  \frac{v_0' y_{i_0}}{y_{s_t}x_{n_1+1}}  + \sum_{\substack{1\leq j\leq m  \\ j\neq t}}  \frac{\beta_{s_j}v_0' x_{s_j}y_{i_0} y_{n_1+1}}{(k+\ell_1)y_{s_j} y_{s_t}}+
 \frac{(\beta_{s_t}-1)v_0' x_{s_t}y_{i_0} y_{n_1+1}}{(k+\ell_1) y_{s_t}^2 }
\quad(\mbox{mod }  (x_{n_1+1}y_{n_1+1} )^2 ).
\end{equation}
Thus
\begin{equation}
-E_{n_1+1,s_t}(w_t) \equiv    \frac{v_0' x_{s_t} y_{i_0}}{y_{s_t} }  +\sum_{\substack{1\leq j\leq m  \\ j\neq t}} \frac{\be_{s_j}v_0' x_{s_j}y_{i_0}  }{(k+\ell_1)y_{s_j}  }+
 \frac{(\beta_{s_t}-1)v_0' x_{s_t}y_{i_0}  }{ (k+\ell_1) y_{s_t}  }
\quad (\mbox{mod }   x_{n_1+1}y_{n_1+1}  ) .
\end{equation}
Therefore,
\begin{eqnarray}\label{b4.130}
-E_{n_1+1,s_t}(w_t) &=&T\left( \frac{v_0' x_{s_t} y_{i_0}}{y_{s_t} }  +\sum_{\substack{1\leq j\leq m  \\ j\neq t}} \frac{\be_{s_j}v_0' x_{s_j}y_{i_0}  }{(k+\ell_1)y_{s_j}  }+
 \frac{(\beta_{s_t}-1)v_0' x_{s_t}y_{i_0}  }{ (k+\ell_1) y_{s_t}  }  \right)\nonumber\\
&=&\frac{\beta_{s_t}+k+\ell_1-1}{k+\ell_1}v_{ t}  + \sum_{\substack{1\leq j\leq m  \\ j\neq t}} \frac{\beta_{s_j}}{k+\ell_1}  v_{j}
\end{eqnarray}
by (\ref{b4.120}).

We set the matrix
\begin{equation}
\mathbf{A} = \frac{1}{k+\ell_1}\left( \beta_{s_j}+\delta_{i j}\cdot (k+\ell_1-1) \right)_{1\leq i,j \leq m}.
\end{equation}
Note that $E_{n_1+1,s_t}(w_t) \in  M_{k}$ for each $t \in \ol{1,m}$, and
\begin{equation}
-\begin{pmatrix}
 E_{n_1+1,s_1} (w_1 )  \\
 E_{n_1+1,s_2} (w_2)   \\
 \vdots   \\
 E_{n_1+1,s_m} (w_m)
\end{pmatrix}
=\mathbf{A} \begin{pmatrix}
v_1   \\
v_2 \\
\vdots    \\
v_m
\end{pmatrix}.
\end{equation}

Since
\begin{equation}
\det (\mathbf{A})=\frac{(k+\ell_1-1)^{m-1}}{(k+\ell_1)^m} (k_{21}+\ell_1+k-1) \neq 0,
\end{equation}
$A$ is invertible. Thus $v_t\in M_k$ for each $t \in \ol{1,m}$.

If $i_0  \in \{s_1,\cdots,s_m\}$, there exist $t_0\in \ol{1,m}$  such that $s_{t_0}=i_0$. Then  $f=v_{t_0}  \in M_k$.
Suppose that $i_0 \not\in \{s_1,\cdots,s_m\}$. Since $T\left( \frac{v_0'}{x_{n_1+1}}\right) \in V'_{k-1}=M_{k-1}$,
\begin{equation}
E_{n_1+1,i_0} \left(T\left( \frac{v_0'}{x_{n_1+1}}\right)\right) \in M_{k} \subseteq V'_{k}.
\end{equation}
Similarly, we have
\begin{equation}
T\left( \frac{v_0'}{x_{n_1+1}}\right) \equiv  \frac{v_0'}{x_{n_1+1}} + \sum_{j=1}^m  \frac{\beta_{s_j}v_0' x_{s_j}y_{n_1+1}}{(k+\ell_1)y_{s_j}} \quad(\mbox{mod }  (x_{n_1+1}y_{n_1+1} )^2 ).
\end{equation}
Then
\begin{equation}
-E_{n_1+1,i_0} T\left( \frac{v_0'}{x_{n_1+1}}\right) \equiv v_0'x_{i_0} + \sum_{j=1}^m  \frac{\beta_{s_j}v_0' x_{s_j}y_{i_0}}{(k+\ell_1)y_{s_j} }\quad(\mbox{mod }  x_{n_1+1}y_{n_1+1}  ).
\end{equation}
Hence
\begin{equation}\label{a4.127}
-E_{n_1+1,i_0} T\left( \frac{v_0'}{x_{n_1+1}}\right) =T\left(  v_0'x_{i_0} + \sum_{j=1}^m  \frac{\beta_{s_j}v_0' x_{s_j}y_{i_0}}{(k+\ell_1)y_{s_j} }\right)=f+\sum_{j=1}^m \frac{\beta_{s_j}}{k+\ell_1} v_j.
\end{equation}
Therefore, \begin{equation}f=-E_{n_1+1,i_0} T\left( \frac{v_0'}{x_{n_1+1}}\right)-\sum_{j=1}^m \frac{\beta_{s_j}}{k+\ell_1} v_j \in M_k.\end{equation}

In summary, $f \in V'_k  \Rightarrow  f\in M_k$.
Recall we have already proved $M_k\subset V_k'$. Thus $M_k=V'_k$.
We have proved the lemma by induction on $k$.
\end{proof}
\pse

Next we calculate the associated variety of $M$ in this case.
\begin{lemma}\label{alem:4.5}
If $\ell_1,\ell_2>0$ and $ n_1<n_2=n$,
\begin{equation}\label{b4.134}
\msr V( {\msr H}_{\la\ell_1,\ell_2\ra})= \msr{V}  ( \{E_{j,i},\Delta^{j_1,j_1'}_{i_1,i_l'}  \mid (j,i)\not\in L; (j_1,i_1),(j_1',i_1')\in L \} ).\end{equation}  
In particular, when $\ell_1,\ell_2>0$ and $ n_1+1=n_2=n$, 
\begin{equation}\label{a4.135}
\msr V( {\msr H}_{\la\ell_1,\ell_2\ra})= \msr{V}  ( \{E_{j,i} \mid (j,i)\not\in L \} ).\end{equation} \end{lemma}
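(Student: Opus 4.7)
My plan is to establish both inclusions in $\msr V(M) = \msr V(J)$, where $J \subset S(\mfk g)$ denotes the ideal generated by the elements on the right-hand side of (\ref{b4.134}). The inclusion $\msr V(M) \subseteq \msr V(J)$ will follow by showing each listed generator lies in $\operatorname{Ann}_{S(\mfk g)}(\ol M)$. The reverse inclusion will follow from a dimension match between $\msr V(J)$ and $\mathrm{GKdim}(M)$, combined with the irreducibility of the relevant determinantal variety. Throughout I work with the filtration $M_k = V_k'$ and the degree function $\mfk d'$ just introduced, noting that $n_2 = n$ forces $J_3 = \emptyset$ and $L = L_1 = J_2 \times J_1$.

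For the generators of $J$, the degree bound (\ref{aa2}) immediately gives $E_{j,i}(M_k) \subseteq M_k$ whenever $(j,i) \notin L$, so $E_{j,i} \in I_{(1)}$. For $(j_1,i_1),(j_1',i_1') \in L$, using the representation formula $\td\pi(E_{j_1,i_1}) = -x_{i_1}x_{j_1} - y_{i_1}\ptl_{y_{j_1}}$ (valid because $J_3 = \emptyset$), I plan to factor the action of $\Delta^{j_1,j_1'}_{i_1,i_1'}$ on $M$ as
\begin{equation*}
\Delta^{j_1,j_1'}_{i_1,i_1'}\big|_M = (x_{i_1'}y_{i_1} - x_{i_1}y_{i_1'})\,(x_{j_1'}\ptl_{y_{j_1}} - x_{j_1}\ptl_{y_{j_1'}}).
\end{equation*}
A direct commutator check against (\ref{a1.7}) (whose $J_3$-sum is empty) shows both factors commute with $\td\Dlt$, so the operator preserves $M$; by (\ref{aa1}) the right factor preserves $\mfk d'$ (both summands involve only $J_2$-variables) while the left factor raises $\mfk d'$ by exactly one. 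Consequently $\Delta^{j_1,j_1'}_{i_1,i_1'}(M_k) \subseteq M_{k+1}$, placing $\Delta^{j_1,j_1'}_{i_1,i_1'}$ in $I_{(2)}$, which finishes the first inclusion.

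For the reverse inclusion, I describe $\msr V(J)$ directly: the linear equations $E_{j,i} = 0$ for $(j,i) \notin L$ restrict to an affine subspace of $\mfk g^*$ of dimension $|L| = |J_1||J_2|$, which I identify with the space of $|J_2| \times |J_1|$ matrices; the $2$-minor equations $\Delta^{j_1,j_1'}_{i_1,i_1'} = 0$ then impose rank $\leq 1$, giving the determinantal variety $\msr V_2(J_2, J_1)$, irreducible of dimension $|J_1| + |J_2| - 1 = n-1$. Combined with $\dim \msr V(M) = \mathrm{GKdim}(\msr H_{\la\ell_1,\ell_2\ra}) = n-1$ by Bai \cite{Bz}, the inclusion $\msr V(M) \subseteq \msr V(J)$ together with irreducibility of $\msr V(J)$ forces equality, proving (\ref{b4.134}). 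For (\ref{a4.135}), when $n_1+1 = n_2 = n$ one has $|J_2| = 1$, and $\Delta^{n,n}_{i_1,i_1'} = [E_{n,i_1}, E_{n,i_1'}] = 0$ in $U(\mfk g)$ for $i_1,i_1' \in J_1$ (since $n \notin J_1$), so the quadratic conditions become vacuous and $\msr V(J)$ reduces to the asserted affine subspace. I expect the main obstacle to be the commutation verification for the factors of $\Delta^{j_1,j_1'}_{i_1,i_1'}$; the case $j_1 = n_1+1$ or $j_1' = n_1+1$ requires care because $\ptl_{y_{n_1+1}}$ interacts nontrivially with the operator $T$, which can be handled either by applying (\ref{a2.7}) carefully or by computing directly on the basis (\ref{a2.8}) and tracking cancellations from antisymmetrization.
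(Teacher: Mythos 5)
Your key claim, that $\Delta^{j_1,j_1'}_{i_1,i_1'}(M_k)\subseteq M_{k+1}$ and hence $\Delta^{j_1,j_1'}_{i_1,i_1'}\in I_{(2)}$, is false, and the difficulty you flag at the end (the case $j_1=n_1+1$) is a genuine obstruction rather than a technicality. The degree argument fails because $M_k=V'_k$ is defined by the value of $\mfk d'$ on the leading monomial $x^\alpha y^\beta$ of the basis vector $T(x^\alpha y^\beta)$, not on the polynomial $T(x^\alpha y^\beta)$ itself: since $\td\Dlt$ contains the raising terms $x_i\ptl_{y_i}$ for $i\in J_1$, the correction terms inside $T(x^\alpha y^\beta)$ carrying a factor $x_{n_1+1}y_{n_1+1}$ can have $\mfk d'$ strictly larger than $\mfk d'(x^\alpha y^\beta)$. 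When one of $j_1,j_1'$ equals $n_1+1$, the factor $x_{j_1'}\ptl_{y_{n_1+1}}-x_{n_1+1}\ptl_{y_{j_1'}}$ strips $y_{n_1+1}$ off such a correction term and produces a new leading monomial, so the total jump in $\mfk d'$ can be $2$. Concretely, with $n_1=2$, $n_2=n=4$, $\ell_1=\ell_2=1$ (so $J_1=\{1,2\}$, $J_2=\{3,4\}$), one has $T(x_4y_1)=x_4y_1+x_1x_3x_4y_3\in M_0$, and by (\ref{b4.36})
\begin{equation*}
\Delta^{3,4}_{1,2}\bigl(T(x_4y_1)\bigr)=(x_2y_1-x_1y_2)(x_4\ptl_{y_3}-x_3\ptl_{y_4})\bigl(x_4y_1+x_1x_3x_4y_3\bigr)=x_1x_2x_3x_4^2y_1-x_1^2x_3x_4^2y_2,
\end{equation*}
which equals $T(x_1x_2x_3x_4^2y_1)-T(x_1^2x_3x_4^2y_2)$ with both leading monomials of $\mfk d'=2$; thus $\Delta^{3,4}_{1,2}(T(x_4y_1))\in M_2\setminus M_1$ and $\Delta^{3,4}_{1,2}\notin I_{(2)}$.

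The paper circumvents this by showing instead that $(\Delta^{j_1,j_1'}_{i_1,i_1'})^{\ell_2+1}$ annihilates $M$ itself: since $\sum_{j\in J_2}\deg_{y_j}(v)\leq\ell_2$ for $v\in M$, the factor $(x_{j_1'}\ptl_{y_{j_1}}-x_{j_1}\ptl_{y_{j_1'}})^{\ell_2+1}$ kills $v$, so each $\Delta^{j_1,j_1'}_{i_1,i_1'}$ lies in the radical $\sqrt{\operatorname{Ann}_{S(\mfk g)}(\ol M)}$, which is all the variety detects. Your forward inclusion must be repaired along these lines. Once that is done, your reverse inclusion via irreducibility of $\msr V_2(J_2,J_1)$ (dimension $n-1$) and Bai's GK-dimension result is a legitimate alternative to the paper's direct computation of $I_{(p)}$, and your observation that the $2$-minors vanish identically in $U(\mfk g)$ when $|J_2|=1$ correctly disposes of the case $n_1+1=n_2=n$.
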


\begin{proof}
Since
\begin{align}
E_{j,i} (M_k) \subset  M_k  \qquad\for\;\; (j,i)\not\in L   ,
\end{align}
we have $\{ E_{j,i} \mid (j,i)\not\in L \} \subseteq I_{(1)}$.

For $i\in \ol{1,n_1}$ and $j\in \ol{n_1+2,n}$,
\begin{equation}
 E_{n_1+1,i}\left(T(x_{n_1+1}^{\ell_1} y_i^{\ell_2})\right)=-\frac{\ell_1+\ell_2+1}{\ell_1+1}T(x_i x_{n_1+1}^{\ell_1+1} y_i^{\ell_2}) \not\in M_0
\end{equation}
by (\ref{b4.130}) with $s_t=i$, and 
\begin{equation}
 E_{j,i}\left(T(x_{n_1+1}^{\ell_1} y_i^{\ell_2})\right)=-T(x_i x_j x_{n_1+1}^{\ell_1 } y_i^{\ell_2}) \not\in M_0
\end{equation}
by (\ref{a2.7}).
Thus $I_{(1)}=\{ E_{j,i} \mid (j,i)\not\in L \} $.

For $ \xi \in  \mfk k_{(p)}$ with $p>1$ (cf. (\ref{a4.12})), we write
\begin{equation}
\xi = \sum_{s=1}^m a_s\prod_{t=1}^p  E_{j_{s,t},i_{s,t}}
\quad\mbox{ where }( j_{s,t},i_{s,t}) \in L,\;  a_s\in \mathbb{F}.
\end{equation}
Fix $v_0\in M_0$, we have
\begin{eqnarray}& &\mfk d' \left(   \left( \prod_{t=1}^p E_{j_{s,t},i_{s,t}}\right)(v_0)\right)  \nonumber\\
&=&\mfk d' \left(  \left( \prod_{t=1}^p (- x_{i_{s,t}} x_{j_{s,t}}- y_{i_{s,t}} \ptl_{y_{j_{s,t}}})\right)(v_0)\right)  = p.
\end{eqnarray} Then  $\xi (v_0)=0$ due to $\xi (v_0) \in M_{p-1}$.

Consider the partial sum of highest degree in $\xi(v_0)$:
\begin{equation}
\sum_{s=1}^m (-1)^p a_s\left( \prod_{t=1}^p (x_{i_{s,t}} x_{j_{s,t}} )\right)(v_0) =0
\end{equation}
When $n_1+1=n_2$, we have
\begin{equation}
\sum_{s=1}^m a_s\left( \prod_{t=1}^p  x_{i_{s,t}}  \right)=0.
\end{equation}
Hence $\xi \in U_{p-1} (\mfk g)$. Therefore, for $p>1$,
\begin{equation}
I_{(p)}  \equiv \{  0\} \quad (\mbox{mod}\; \la I_{(1)}\ra).
\end{equation}
Then 
\begin{equation}
 \operatorname{Ann}_{S( \mfk g)} (\ol{M}) = \left\langle
E_{j,i} \mid (j,i)\not\in L  \right\rangle.
\end{equation}
and (\ref{a4.135}) holds. 

When $n_1+1<n_2$, we have
\begin{equation}
\sum_{s=1}^m a_s\left( \prod_{t=1}^p  x_{i_{s,t}} x_{j_{s,t}}  \right)=0.
\end{equation}
According to the Lemma \ref{alem:2}, 
\begin{equation}
\xi \in  \la \Delta^{j_1,j_1'}_{i_1,i_l'}  \mid   (j_1,i_1),(j_1',i_1')\in L \ra.
\end{equation}
Recall (\ref{b4.36}), for $v_i\in M_i$ and $(j_1,i_1),(j_1',i_1') \in L$,
\begin{equation}
\left(\Delta_{i_1,i_1'}^{j_1,j_1'}\right)^{\ell_2+1}(v_i)=
 (x_{i_1'}y_{i_1}-x_{i_1}y_{i_1'})^{\ell_2+1}\left(x_{j_1'}\ptl_{y_{j_1}}-x_{j_1}\ptl_{y_{j_1'}}\right)^{\ell_2+1}(v_i)=0 ,
 \end{equation}
since $\sum_{j\in J_2}\deg_{y_j}(v_i) \leq \ell_2$.
Hence 
\begin{equation}
\sqrt{I}  = \la E_{j,i},\Delta^{j_1,j_1'}_{i_1,i_l'}  \mid (j,i)\not\in L; (j_1,i_1),(j_1',i_1')\in L \ra
\end{equation}
and (\ref{b4.134}) holds.

\end{proof}
\pse

Remind that we denoted the determinantal ideal $I_t (S_1,S_2)$ to be the ideal generated by all the $t$-minors in the matrix $(z_{i,j})_{i \in S_1,j\in S_2}$, and $\msr{V}_t(S_1,S_2) =\msr{V}(I_t)$ is called  a {\it determinantal variety}, where $S_1$ and $S_2$ are subsets of $\ol{1,n}$. In particular,
\begin{align}I_2 (S_1,S_2)=\left\langle  \Delta_{i_1,i_2}^{ j_1,j_2} \mid    j_1, j_2 \in S_1;\  i_1, i_2 \in S_2 \right\rangle\end{align} and
\begin{align}I_3 (S_1,S_2)=\left\langle  \Delta_{i_1,i_2,i_3}^{j_1,j_2,j_3} \mid    j_1, j_2,j_3 \in S_1;\ i_1,i_2,i_3 \in S_2  \right\rangle.\end{align}
For convenience, we denote $I_t(S_1,S_2)=0$ and $\msr{V}(I_t(S_1 , S_2)) =\mbb A^{|S_1||S_2|}$, when $t>|S_1|$ or $t>|S_2|$.
Now we have our main theorem:

\begin{theorem}
If $n_1<n_2$, $\ell_1 \leq 0$ or $\ell_2 \leq 0$,
\begin{equation}\label{b4.150}
\msr V({\msr H}_{\la\ell_1,\ell_2\ra})\cong \msr{V}( I_3(J_2\cup J_3, J_1\cup J_2))\cap \msr{V}(I_2(J_2 , J_1))\cap \msr{V}(I_2(J_3 , J_2)) \cap\msr{V}(I_1(J_2 , J_2)).\end{equation}
If $n_1=n_2$,
\begin{equation}\label{a4.149}
\msr V({\msr H}_{\la\ell_1,\ell_2\ra})\cong \msr{V}\left(  I_3(  J_3, J_1 ) \right).\end{equation}
When $n_1<n_2=n$ and $\ell_1,\ell_2>0$,
\begin{equation}
\msr V( {\msr H}_{\la\ell_1,\ell_2\ra})\cong \msr{V}\left(  I_2(  J_2, J_1 ) \right).\end{equation}

In particular, for the irreducible $sl(n)$-module ${\msr H}_{\la\ell_1,\ell_2\ra}$ in Proposition 1, the associated variety is covered by the above-mentioned cases.
\end{theorem}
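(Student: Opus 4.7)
The plan is to assemble the ingredients developed in Sections~3--4 and translate them into geometry. The last assertion, the case $n_1<n_2=n$ with $\ell_1,\ell_2>0$, is already Lemma~4.5 together with the conventions on $I_t(S_1,S_2)$, so the substantive work is to establish the first two formulas. Writing
\begin{equation*}
I=\operatorname{Ann}_{S(\mfk g)}(\ol M)=\sum_{p\ge 1}I_{(p)},
\end{equation*}
the idea is to show that $\sqrt{I}$ coincides with the ideal generated by $\{E_{j,i}:(j,i)\notin L\}$, the Cartan elements $\{H_r\}$, the 2-minors for $L_1$ and $L_2$, and the 3-minors $\Delta^{j_1,j_2,j_3}_{i_1,i_2,i_3}$ with $i_s\in J_1\cup J_2$ and $j_s\in J_2\cup J_3$. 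Once the ideal is identified, the geometric identification is purely bookkeeping.

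For the ideal identification, $I_{(1)}$ contains all $E_{j,i}$ with $(j,i)\notin L$ by (\ref{a4.19}), and it also contains every Cartan generator $H_r=E_{r,r}-E_{r+1,r+1}$, since $H_r$ preserves the weight decomposition of $M_0$. Lemma~4.2 identifies $I_{(2)}$ modulo $\langle I_{(1)}\rangle$ with $\Delta^{j,j'}_{i,i'}$ for pairs in $L_1\cup L_2$; in the mixed-sign subcases, the ``missing'' 2-minors enter only as powers (cf.\ (\ref{b4.19}) and (\ref{b4.21})), but those contribute the same vanishing locus as the 2-minors themselves, so no information is lost upon passing to $\sqrt{I}$. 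Lemma~4.3 then absorbs every $I_{(p)}$ for $p\ge 3$ into the 3-minor ideal modulo the generators already recorded.

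To read this algebraic description as an intersection of determinantal varieties, identify $\mfk g^*$ with $\mfk g$ through the Killing form. The vanishing of all $E_{j,i}$ with $(j,i)\notin L$ together with the $H_r$ cuts $\mfk g^*$ down to the subspace of matrices supported on $L_1\cup L_2\cup L_3$, which is exactly the space of $(J_2\cup J_3)\times(J_1\cup J_2)$ matrices whose $J_2\times J_2$ block vanishes, i.e., the factor $\msr V_1(J_2,J_2)$. Under this identification, the $L_1$ and $L_2$ two-minor constraints become $\msr V_2(J_2,J_1)$ and $\msr V_2(J_3,J_2)$, while the 3-minor constraints translate to $\msr V_3(J_2\cup J_3,J_1\cup J_2)$, producing the first formula. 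Specializing to $n_1=n_2$ makes $J_2$ empty; by the conventions stated before the Main Theorem the $L_1$, $L_2$ pieces and the $I_1(J_2,J_2)$ factor are trivial, the 3-minor ideal collapses to $I_3(J_3,J_1)$, and one recovers the second formula.

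The main obstacle I anticipate is the careful verification of the ideal identification, specifically: showing that no generator of $I_{(p)}$ for $p\ge 3$ escapes the 3-minor ideal modulo the previously identified part, which requires the delicate $P$-order bound of Lemma~4.1 combined with the ``3-chain'' linear independence result Lemma~3.3; and simultaneously confirming that the high-power terms in (\ref{b4.19})--(\ref{b4.21}) do not enlarge the variety. A secondary subtlety is the small-dimensional edge cases appearing in Proposition~1 when $|J_1|$, $|J_2|$, or $|J_3|$ is $0$ or $1$, where one must check that the conventions $I_t(S_1,S_2)=0$ and $\msr V_t(S_1,S_2)=\mbb A^{|S_1||S_2|}$ for $t>|S_i|$ are applied consistently so that the stated intersection reduces correctly to the expected simpler variety.
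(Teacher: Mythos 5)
Your plan is essentially the paper's plan for the $n_1<n_2$ cases: collect $I_{(1)}$ (which, as you observe, already contains the Cartan generators since $(r,r)\notin L$), use Lemma~4.2 for $I_{(2)}$, absorb $I_{(p)}$ for $p\ge 3$ into the $3$-minor ideal via Lemma~4.3, note that in the mixed-sign subcase the ``missing'' $2$-minors appear only as powers but contribute the same radical, and finally read off the determinantal intersection, checking the small-$|J_i|$ conventions at the end. All of that matches the paper.

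The gap is in the $n_1=n_2$ branch. You treat it as a formal degeneration of the $n_1<n_2$ formula by letting $J_2$ be empty, but the derivation does not specialize: the operator $T$ in (\ref{a2.6}), the spanning set (\ref{a2.8}), the degree function $\mfk d$ in (\ref{a2.46}), and hence Proposition~2.1 and Lemmas~4.2--4.3 all require $n_1+1\in J_2$, i.e.\ $n_1<n_2$. The paper flags this explicitly (``(2.8) no longer holds'') and gives a genuinely separate argument for $n_1=n_2$: it imports the explicit description of $\msr H_{\la\ell_1,\ell_2\ra}$ and its filtration from Section~6.6 of \cite{Xx}, takes $M_0$ to be (for $\ell_1,\ell_2\le 0$) $\mbox{Span}\{X_{J_1}^{m_1}Y_{J_3}^{m_2}\}$ and shows $M_k=\mbox{Span}\{M_0P^i\mid i\le k\}$, and then extracts $\msr V(I_3(J_3,J_1))$ directly from Lemma~3.4 applied with $L=L_3$, with a parallel argument for the mixed-sign $n_1=n_2$ case using the alternative basis from (6.6.53) of \cite{Xx}. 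Without some such independent filtration analysis when $J_2=\emptyset$, the second formula is unjustified, even though the formal answer happens to coincide with the $J_2\to\emptyset$ specialization.
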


\begin{proof}
If $\ell_1,\ell_2>0$ and $n_1<n_2=n$, then $J_3=\emptyset, |J_1|=n_1, |J_2|=n-n_1$ and 
\begin{eqnarray}\label{b4.154}
\msr V( {\msr H}_{\la\ell_1,\ell_2\ra})&=&  \msr{V}  ( \{E_{j,i},\Delta^{j_1,j_1'}_{i_1,i_l'}  \mid (j,i)\not\in L; (j_1,i_1),(j_1',i_1')\in L \} )\nonumber\\
&\cong &\msr{V}\left(  I_2(  J_2, J_1 ) \right)
\end{eqnarray}
as an affine variety by Lemma \ref{alem:4.5}. Similarly, when $\ell_1\leq 0\leq \ell_2$ and $n_1<n_2=n$, (\ref{b4.154}) still holds.

Suppose $|J_1|,|J_2|,|J_3|\geq 2$. According to Lemma \ref{alem:4.2} and Lemma \ref{alem:4.3}, we have:
if $\ell_1 \leq 0$ or $\ell_2 \leq 0$,
\begin{eqnarray}\label{a4.151}
& &\msr V(  {\msr H}_{\la\ell_1,\ell_2\ra})\nonumber\\
& =&\msr V \left( \left\{ \left. \begin{aligned}
E_{i,j},\Delta^{j_1,j_2,j_3}_{i_1,i_2,i_3 } ,  \Delta^{k_2,k_2'}_{k_1,k_1' } ,\Delta^{k_3,k_3'}_{k_2,k_2' } \quad \end{aligned} \right\vert \begin{aligned} &
 i_1,i_2,i_3\in J_1\cup J_2; j_1,j_2,j_3 \in J_2\cup J_3;\nonumber\\&  (i,j)\notin L;  k_s,k_s'\in J_s,s=1,2,3  \end{aligned}\right\} \right) \nonumber\\
&\cong& \msr{V}\left( I_3(J_2\cup J_3, J_1\cup J_2)+ I_2(J_2 , J_1)+ I_2(J_3 , J_2) +I_1(J_2 , J_2)\right);
\end{eqnarray}
So (\ref{b4.150}) hold.

When $n_1<n_2$, $|J_1|<2$ or $|J_2|=1$ or $ |J_3|<2$, (\ref{a4.33}) and (\ref{a4.77}) still hold, but
some terms in these two expressions are zero in this case. They correspond the cases: $I_2(J_2 , J_1) =0$ if $|J_2|\leq 1$ or $|J_1|\leq 1$, $I_2(J_3 , J_2) =0$ if $|J_3 |\leq 1$ or $|J_2|\leq 1$, $I_3(J_2\cup J_3 , J_1\cup J_2) =0$ if $|J_2\cup J_3|\leq 2$ or $|J_1\cup J_2|\leq 2$.
Hence (\ref{a4.151}) still hold.

Suppose $n_1=n_2$.
In this case,
\begin{equation}\td\Dlt=\sum_{i=1}^{n_1}x_i\ptl_{y_i}+\sum_{s=n_2+1}^n
y_s\ptl_{x_s}, \end{equation}
and (\ref{a2.8}) no longer holds.

Suppose $n_1=n_2<n$. First we consider the case $\ell_1=-m_1,\ell_2=-m_2 \leq 0$. According to (6.6.52) in \cite{Xx},
\begin{eqnarray}M&=&\mbox{Span}\{[\prod_{r=1}^{n_1}x_r^{l_r}]
[\prod_{s=1}^{n-n_1}y_{n_1+s}^{k_s}][\prod_{r=1}^{n_1}\prod_{s=n_1+1}^n(x_rx_s-
y_ry_s)^{l_{r,s}}]\nonumber\\& &\qquad\;\;\mid
l_r,k_s,l_{r,s}\in\mbb{N};\sum_{r=1}^{n_1}l_r=m_1;\sum_{s=1}^{n-n_1}k_s=m_2\}.\end{eqnarray}
We take
\begin{equation}M_0 =   \mbox{Span}\{X_{J_1}^{m_1}Y_{J_3}^{m_2}\}  \end{equation}(cf. (\ref{a2.40}) and (\ref{a2.41})).
Then by the arguments from (6.6.45) to (6.6.49) in \cite{Xx}, we can conclude that
\begin{equation}M_k =   \mbox{Span}\{X_{J_1}^{m_1}Y_{J_3}^{m_2}P^i \mid i\in \ol{0,k} \}  . \end{equation}
In this case, $L=L_3$ and we can calculate that
\begin{eqnarray}\msr V( {\msr H}_{\la-m_1,-m_2\ra})
&=&\msr{V}  ( \{E_{j,i} , \Delta^{j_1,j_2,j_3}_{i_1,i_2,i_3 }\mid (j,i)\not\in L;(j_s,i_s)\in L ,s=1,2,3  \} ) \nonumber \\
&\cong& \msr{V}\left(  I_3(  J_3, J_1 ) \right)  \end{eqnarray}
by Lemma \ref{alem:3.3}.

Next we consider the case $1<n_1=n_2<n $, $\ell_1 \leq 0$ and $\ell_2 \geq 0$. We set $\ell_1=-m_1-m_2$ and $\ell_2= m_2$ with $m_1,m_2 \in \mathbb{N}$. In this case,
\begin{eqnarray}M&=&{\msr H}_{\la
-m_1-m_2,m_2\ra}\nonumber\\&=&\mbox{Span}\{[\prod_{r=1}^{n_1}x_r^{l_r}]
[\prod_{1\leq p<q\leq
n_1}(x_py_q-x_qy_p)^{k_{p,q}}][\prod_{r=1}^{n_1}\prod_{s=n_1+1}^n(x_rx_s-
y_ry_s)^{l_{r,s}}]\nonumber\\& &\qquad\;\;\mid
l_r,k_{p,q},l_{r,s}\in\mbb{N};\sum_{r=1}^{n_1}l_r=m_1;\sum_{1\leq
p<q\leq n_1}k_{p,q}=m_2\}\end{eqnarray}
by (6.6.53) in \cite{Xx}. Take
\begin{equation}M_0 =   \mbox{Span}\left\{  \left. \left[ \prod_{ 1\leq p<q\leq n_1} (x_py_q-x_q y_p)^{k_{p,q}}\right] X_{J_1}^{m_1} \right\vert
k_{p,q}\in \mathbb{N}, \sum_{ 1\leq p<q\leq n_1}k_{p,q} =m_2
\right\} . \end{equation}
Then by (6.6.51) in \cite{Xx}, we can conclude that
\begin{equation}M_k =   \mbox{Span}\{ M_0 P^i \mid i\leq k
\} . \end{equation}
and
\begin{eqnarray}\msr V( {\msr H}_{\la\ell_1,\ell_2\ra})&=& \msr{V}  ( \{E_{j,i} ,\  \Delta^{j_1,j_2,j_3}_{i_1,i_2,i_3 }\mid (j,i)\not\in L;(j_s,i_s)\in L ,s=1,2,3  \} ) \nonumber\\
&\cong& \msr{V}\left(  I_3(  J_3, J_1 ) \right).\end{eqnarray}
holds. Symmetrically, when $ n_1=n_2<n $, $\ell_1 \geq 0$ and $\ell_2 \leq 0$, the above equation still holds.

In summary, (\ref{a4.149}) holds if $n_1=n_2$.
This completes the proof of the theorem.

\end{proof}
\psp

In particular, when $n_1=n_2$ and $|J_1|,|J_3| \geq 3$,
\begin{equation}\msr V({\msr H}_{\la\ell_1,\ell_2\ra}) \cong
\msr{V}_3 (J_3 , J_1)\end{equation}
 is exactly a determinantal variety. When $n_2=n$, the projective version of the associated variety
\begin{equation}\msr V({\msr H}_{\la\ell_1,\ell_2\ra}) \cong
\msr{V}_2 (J_2 , J_1)\end{equation}
is the well-known Serge variety in algebraic geometry.

The GK-dimension is equal to the dimension of the associated variety (as an affine variety). We can calculate that

\begin{equation}
\dim \msr{V}(M)=
\begin{cases}
2n-3,\ \quad \text{if }n_2\neq n \text{ and } n_1\neq n_2; \\
2n-4,\ \quad \text{if }1 < n_1=n_2<n-1; \\
n-1,\ \quad \text{if } 1=n_1=n_2 \text{ or } n_1=n_2=n-1\text{ or } n_1\neq n_2=n,
\end{cases}
\end{equation}
which was obtained earlier by Bai in \cite{Bz}.

 \end{document}